\newtheorem{lemma}{Lemma}
\newtheorem{theorem}{Theorem}
\newtheorem{observation}{Observation}
\newtheorem{problem}{Problem}
\newenvironment{facts}{\noindent\textit{Related facts. }}{\par\medskip}
\newcommand{\XSays}[3]{{\color{#2}
      {$\rule[-0.12cm]{0.2in}{0.5cm}$\fbox{\tt
            #1:} }%
      \itshape #3
      \marginpar{\color{#2}\tt #1}%
      \def\comment{#3}\def\empty{}\ifx\comment\empty\else
      {$\rule[0.1cm]{0.3in}{0.1cm}$\fbox{\tt
            end}$\rule[0.1cm]{0.3in}{0.1cm}$} \fi
   }%
}
\newcommand\floatc@mybox[2]{\vbox{\hbadness10000
\moveleft3.4pt\vbox{\advance\hsize by6.8pt
\hrule \hbox to\hsize{\vrule\kern3pt
\vbox{\kern3pt\vbox{\advance\hsize by-6.8pt{\@fs@cfont #1} #2}\kern3pt}\kern3pt\vrule}}}}%
\newcommand\fs@mybox{\def\@fs@cfont{\bfseries}\let\@fs@capt\floatc@mybox
\def\@fs@pre{\setbox\@currbox\vbox{\hbadness10000
\moveleft3.4pt\vbox{\advance\hsize by6.8pt
\hrule \hbox to\hsize{\vrule\kern3pt
\vbox{\kern4.5pt\box\@currbox\kern4.5pt}\kern3pt\vrule}\hrule}}}%
\def\@fs@mid{}%
\def\@fs@post{}%
\let\@fs@iftopcapt\iftrue}
\title{Reptilings and space-filling curves for acute triangles}
\author{Marinus Gottschau\\Ludwig-Maximilians-Universit\"at M\"unchen\and
        Herman Haverkort\\Technische Universiteit Eindhoven\and
        Kilian Matzke\\Ludwig-Maximilians-Universit\"at M\"unchen}
\begin{document}
\maketitle

\begin{abstract}
An \emph{$r$-gentiling} is a dissection of a shape into $r \geq 2$ parts which are all similar to the original shape. An \emph{$r$-reptiling} is an $r$-gentiling of which all parts are mutually congruent. By applying gentilings recursively, together with a rule that defines an order on the parts, one may obtain an order in which to traverse all points within the original shape. We say such a traversal is a \emph{face-continuous space-filling curve} if, at any level of recursion, the interior of the union of any set of consecutive parts is connected---that is, consecutive parts must always meet along an edge. Most famously, the isosceles right triangle admits a 2-reptiling, which forms the basis of the face-continuous Sierpi\'nski space-filling curve; many other \emph{right} triangles admit reptilings and gentilings that yield face-continuous space-filling curves as well. In this study we investigate what \emph{acute} triangles admit non-trivial reptilings and gentilings, and whether these can form the basis for face-continuous space-filling curves.
We derive several properties of reptilings and gentilings of acute (sometimes also obtuse) triangles, leading to the following conclusion: no face-continuous space-filling curve can be constructed on the basis of reptilings of acute triangles.
\end{abstract}

\section{Introduction}

\subsection{Reptilings, gentilings and space-filling curves}

\paragraph{Reptilings and gentilings}
We call a geometric figure, that is, a set of points in Euclidean space, $T$ an \emph{$r$-gentile} if $T$ admits an \emph{$r$-gentiling}, that is, a subdivision of $T$ into $r \geq 2$ figures (\emph{tiles}) $T_1,...,T_r$, such that each of the figures $T_1,...,T_r$ is similar to $T$. In other words, $T$ is a $r$-gentile if we can tile it with $r$ smaller copies of itself. This generalizes the concept of \emph{reptiles}, coined by Golomb~\cite{golomb}: a figure $T$ is an \emph{$r$-reptile} if $T$ admits an \emph{$r$-reptiling}, that is, a subdivision of $T$ into $r \geq 2$ figures $T_1,...,T_r$, such that each of the figures $T_1,...,T_r$ is similar to $T$ \emph{and all figures $T_1,...,T_r$ are mutually congruent}. In other words, $T$ is a $r$-reptile if we can tile it with $r$ equally large smaller copies of itself.

\paragraph{Traversal orders}
If we augment the definitions of gentilings with an order on the tiles, we can use gentilings to define a mapping $f$ from the unit interval $[0,1)$ to a gentile $T$, such that the closure of the image of $f$ is all of $T$ and such that $f$ is measure-preserving (up to uniform scaling). In other words, the image of an interval $[a,b) \subseteq [0,1)$ is a set $T' \subseteq T$ such that $|T'| = (b-a)|T|$, where we use $|X|$ to denote the Lebesgue measure (area) of $X$. We call such a mapping a \emph{traversal order}. An example is illustrated in Figure~\ref{fig:sierpinski}. The figure shows how, by simultaneously subdividing intervals and triangular tiles, while maintaining a one-to-one correspondence between intervals and tiles, we can get an arbitrarily precise approximation of a mapping $f$ from $[0,1)$ to a triangle.

\begin{figure}
\centering
\includegraphics[width=\hsize]{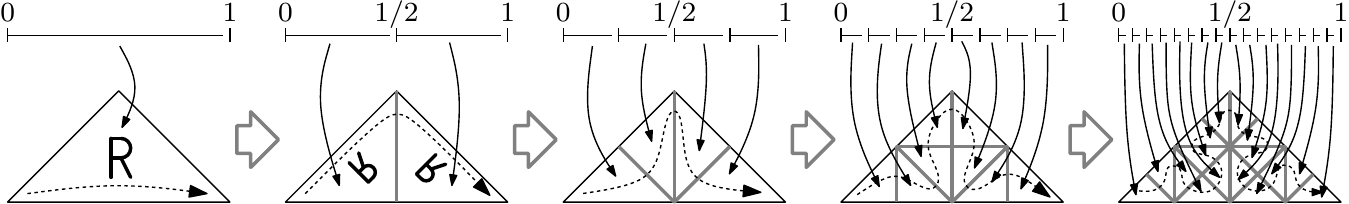}
\caption{Definition and increasingly fine approximations of the Sierpi\'nski traversal order. The dotted curve through the tiles in each figure illustrates the order in which the tiles are traversed.}
\label{fig:sierpinski}
\end{figure}

In general, a traversal order for a gentile $T$ could be defined recursively as follows. Suppose we are given a gentiling of $T$ with tiles $T_1,...,T_r$, with corresponding traversal orders $f_1,...,f_r$. Let $a_i$ be the total size of the tiles before $T_i$, that is, $a_i = \sum_{j=1}^{i-1} |T_j|$. Then we define a traversal order $f$ for $T$ by $f(x) = f_i((x-a_i)/(a_{i+1}-a_i))$ where $i$ is the maximum $i$ such that $a_i \leq x$. In other words, we define $f$ by transforming the domains of $f_1,...,f_r$ so that their domains become consecutive and cover $[0,1)$. Note that the base case of the recursion is missing. A practical solution is to require that $f$ is order-preserving and self-similar, that is, we define each of the traversal orders $f_i$ to be equal to $t_i \circ f$, where $t_i$ is a similarity transformation that transforms $T$ into $T_i$. (In Figure~\ref{fig:sierpinski}, these transformations are depicted by showing, in each tile, the image of a letter \textsf{R} in the initial triangle.) The traversal order $f$ is then the unique solution of the equation $f(x) = t_i(f((x-a_i)/(a_{i+1}-a_i)))$, where $i$ is the maximum $i$ such that $a_i \leq x$. One may also consider reversing the direction of the traversal within certain tiles as compared to the traversal of $T$, resulting in a traversal that is self-similar but not order-preserving. A more general solution is to require that for any tile $T_i$ in a gentiling of $T$, we have some way of choosing a gentiling ${\cal T}$ to subdivide $T_i$ and to choose an order on the tiles of ${\cal T}$. This enables us to approximate $f$ to any desired level of accuracy.

\paragraph{Space-filling curves}
We may extend the domain of a traversal order to $[0,1]$ by defining $f(1) = \lim_{x \uparrow 1} f(x)$. A measure-preserving \emph{space-filling curve} is a traversal order that is continuous. This can be achieved by choosing the order of the tiles in the gentilings that are applied carefully, to ensure that we have $f_i(1) = f_{i+1}(0)$, for $1 \leq i < r$. In the case of order-preserving self-similar curves, this can be rewritten as $t_i(f(1)) = t_{i+1}(f(0))$. The existence of space-filling curves for two-dimensional tiles was first demonstrated by Peano~\cite{peano}. Since then, various space-filling curves have been defined (see, for example, Sagan~\cite{sagan}). The example in Figure~\ref{fig:sierpinski} describes the well-known Sierpi\'nski curve.

At this point we can also define formally what it means for a space-filling curve $f$ with image $T$ to be \emph{based} on a reptiling or gentiling. We say $f$ is based on a \emph{gentiling} if there is an infinite sequence of gentilings ${\cal T}_1,{\cal T}_2,...$ of $T$, such that (i) each gentiling ${\cal T}_{i+1}$, for $i \geq 1$, can be obtained by subdividing the tiles of ${\cal T}_i$ by a gentiling, and (ii) the curve $f$ traverses the tiles of each gentiling ${\cal T}_i$ one by one. We say $f$ is based on a \emph{reptiling} if there is a natural number $r$ such that ${\cal T}_1$ and the gentilings that are applied to the tiles of each ${\cal T}_i$ to obtain ${\cal T}_{i+1}$ are $r$-reptilings. Note that we do not require these $r$-reptilings to be similar to each other, we only require them to have the same number of tiles $r$. 

\paragraph{Face-continuous space-filling curves}
Observe that on any level of recursion that defines the Sierpi\'nski curve, each pair of triangles that are consecutive in the order share an edge. This property is captured by the concept of \emph{face-continuity}: we say a space-filling curve $f$ is \emph{face-continuous} if it is measure-preserving and the interior of the image of any interval under $f$ is a connected set. (The word ``face-continuity'' stems from its use in the three-dimensional setting, where it captures the fact that each pair of consecutive polyhedral tiles shares a two-dimensional face.)

\subsection{Reptilings and space-filling curves based on triangles: our results}

\paragraph{Problem statement}
Several space-filling curves have been described in the literature, and they differ in usefulness, depending on the application. For example, the Lebesgue curve~\cite{lebesgue} (the curve underlying Morton indexing~\cite{morton}), the Hilbert curve~\cite{hilbert}, and the Sierpi\'nski curve have all been used for applications in which the other two curves would not work as well~\cite{quadtree,rtree,tsp}. Thus there is a general interest in exploring the space of possible space-filling curves for curves with potentially useful properties.

One particular use of the Sierpi\'nski curve is in the construction and traversal of two-dimensional triangular meshes as described by Bader and Zenger~\cite{bader}. The Sierpi\'nski curve is particularly well-suited for this application for two reasons. First, the curve is face-continuous, which was exploited by Bader and Zenger to reduce the number of stack operations in their algorithm. Second, the curve is based on a reptiling of triangles that are reasonably well-shaped for use as elements in a computational mesh: the smallest angles of the triangles are not that small; their largest angles are not that large; and their longest edges are not that long (relative to the square root of the area of a triangle). However, the shape of the right triangles of the Sierpi\'nski curve is not optimal in these respects: (certain) acute triangles would be better. This raises the question whether one can construct a face-continuous space-filling curve based on reptilings with acute triangles, which could then replace the Sierpi\'nski curve in the traversal method described by Bader and Zenger~\cite{bader}. This paper investigates that question.

We may address this question in three stages. First, what triangles are $r$-reptiles for what $r$? Second, what tilings are admitted by those triangles? Third, which of these tilings support the definition of a face-continuous space-filling curve?

\begin{figure}
\centering
\includegraphics{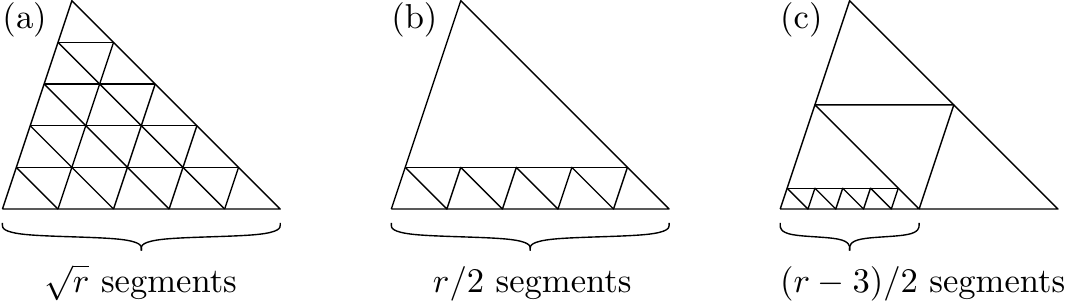}
\caption{(a) A trivial $r$-reptiling for any square number $r = n^2$. (b) A trivial $r$-gentiling for any even $r \geq 4$. (c) A trivial $r$-gentiling for any odd $r \geq 7$.}
\label{fig:trivial}
\end{figure}

\begin{inset}[b]
\caption{Which triangles are $r$-gentiles and $r$-reptiles for what $r$?}
\footnotesize
As shown by Freese et al.~\cite{freese} and illustrated by Figure~\ref{fig:trivial}(b,c), each triangle is an $r$-gentile if $r = 4$ or $r \geq 6$. Figure~(a) below shows a 4-gentiling that is non-trivial for any non-equilateral triangle. Figure~(b) below shows that each right triangle is a 2-gentile, and thus, by repeatedly subdividing a single tile, one can get an $r$-gentiling of a right triangle for any $r \geq 2$. Freese et al.~\cite{freese} also showed that no oblique triangle is a 2-gentile or a 3-gentile, and Kaiser~\cite{kaiser} proved that the only oblique triangle that is a 5-gentile is the isosceles triangle with top angle $2\pi/3$; the tiling is shown in Figure~\ref{fig:facecontinuouscurves}(d).

\addvspace\baselineskip
\bgroup\centering
\includegraphics[width=0.95\hsize]{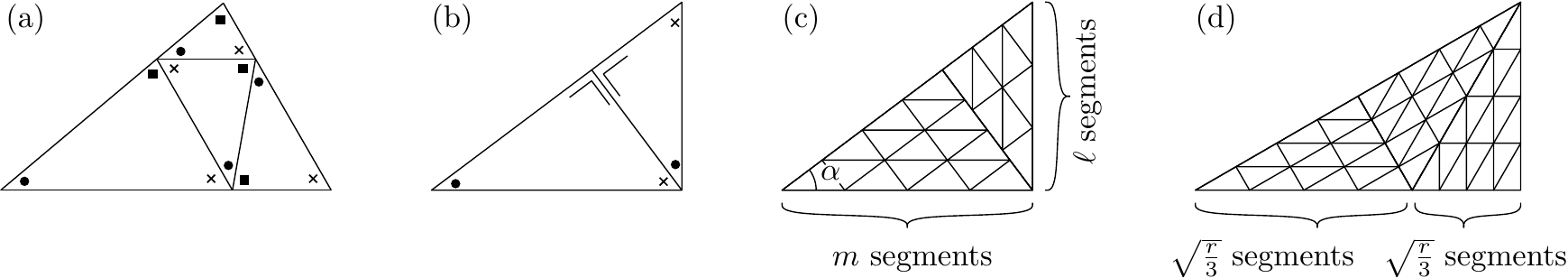}\par
\egroup
\addvspace\baselineskip

Snover et al.~\cite{snover} showed that a triangle $T$ is an $r$-reptile if and only if at least one of the following conditions is satisfied: (i) $r$ is a square number; (ii) $T$ is a right triangle with angle $\alpha$ and there are natural numbers $l$ and $m$ such that $\tan\alpha = l/m$ and $r = l^2 + m^2$; (iii) $T$ is a right triangle with angles $\pi/6$, $\pi/3$ and $\pi/2$ and $r/3$ is square. Examples of corresponding reptilings are illustrated in Figure~\ref{fig:trivial}(a), Figure (c) above, and Figure (d) above, respectively, but alternative tilings with the same numbers of tiles may exist.
\label{ins:gentiles}
\end{inset}

\paragraph{What triangles are reptiles?}
The answer to the first question is known: it is known what triangles are $r$-reptiles for what $r$. To start with, every triangle $T$ admits an $n^2$-reptiling for any natural $n$. Such a tiling is achieved as follows: we divide every side of $T$ into $n$ segments of equal length, and then we draw all lines that are parallel to the sides of $T$ and contain an endpoint of one of the segments, see Figure~\ref{fig:trivial}(a). We will call the resulting subdivision of $T$ the \emph{trivial} $n^2$-reptiling. In fact, we can also use such reptilings to obtain various gentilings. By merging some of the tiles into a larger triangle, we can obtain an $r$-gentiling of $T$ for any $r \geq 6$, as shown by Freese et al.~\cite{freese}, see Figure~\ref{fig:trivial}(b,c). We will call such gentilings \emph{trivial} as well, that is, a gentiling is trivial if there is an underlying set of triangles in a regular grid pattern as described above, such that each tile is a union of triangles from this grid. As far as the value of $r$ goes, these are the only possibilities for acute triangles: acute triangles admit an $r$-reptiling if and only if $r$ is square, and they admit an $r$-gentiling if and only if $r$ is a positive natural number other than 2, 3, or 5 (for further references, see Inset~\ref{ins:gentiles}).

\begin{inset}
\caption{Can you solve this puzzle before reading this article?}
\leavevmode\hbox{}\par\addvspace{0.5\baselineskip}
\bgroup\centering
\includegraphics[width=0.95\hsize]{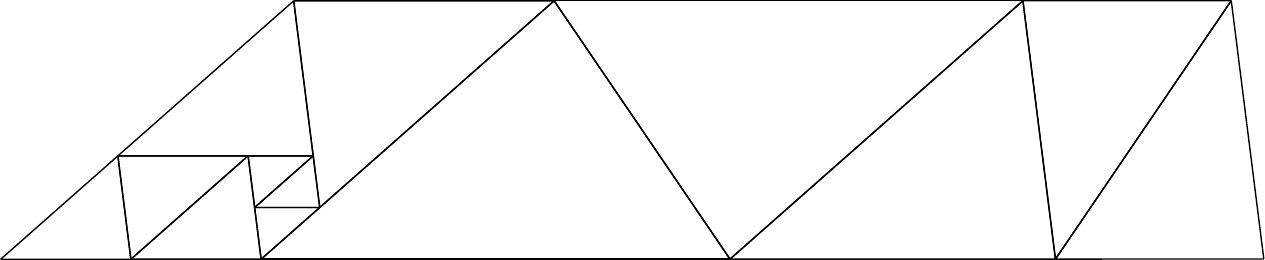}\par
\egroup
\addvspace{.5\baselineskip}
Can you rearrange the triangles in the above figure to form a single triangle? You may need to flip some pieces over.
\label{ins:puzzle}
\end{inset}

\paragraph{What reptilings are possible?}
The answer to the second question is not known: although we know what triangles admit \emph{at least one} $r$-reptiling, we do not know of a full characterization of triangles that admit \emph{non-trivial} $r$-reptilings. Inset~\ref{ins:gentiles} shows some non-trivial reptilings and gentilings but this figure certainly does not illustrate all possibilities. The reader who would like to get some experience with non-trivial tilings, may try to solve the puzzle in Inset~\ref{ins:puzzle} before reading on (at least one solution is a gentiling, which could be refined into a reptiling by cutting up the pieces of the puzzle further). In Section~\ref{sec:irrational} we obtain the following result:

\begin{theorem}\label{main1}
If $T$ is an acute triangle, there exists a non-trivial reptiling of $T$ if and only if $T$ has a pair of sides with lengths $a$ and $b$, such that $b/a$ is a rational number $p/q$ where $p \neq q$.
\end{theorem}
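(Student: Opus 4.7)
The theorem is an equivalence between the existence of a non-trivial reptiling of an acute triangle $T$ and the existence of two sides of $T$ with a rational, non-unit ratio. I plan to prove the two directions separately. Throughout I use that, by the Snover et al.\ characterization quoted in Inset~\ref{ins:gentiles}, any reptiling of an acute triangle $T$ is an $n^2$-reptiling for some natural $n$.

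\emph{(If) Construction.} Assume two sides of $T$ have lengths $a$ and $b$ with $b/a = p/q$ in lowest terms and $p \neq q$. Fix $n$ large and consider the trivial $n^2$-reptiling, whose small tiles have sides $a_0 = a/n$, $b_0 = b/n$, $c_0 = c/n$. Because $q\,a_0 = p\,b_0$, a line segment of that common length can be partitioned in two different ways by small-tile edges: into $p$ pieces of length $a_0$ or into $q$ pieces of length $b_0$. The plan is to locate a sub-region of $T$ (for example, a parallelogram whose two edges run along the directions of sides $a$ and $b$, both of length $p\,a_0 = q\,b_0$, or a small similar copy of $T$ with analogous boundary structure) whose interior tiling can be replaced by an alternative tiling of the same sub-region that interchanges $a$- and $b$-partitions along part of its boundary and reorients small tiles inside. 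Substituting this alternative tiling into the rest of the trivial reptiling yields a valid $n^2$-reptiling in which some small tiles have edges not parallel to any side of $T$. Since any underlying triangular grid of $T$ must have its lines parallel to the sides of $T$, the result cannot be a refinement of any such grid, hence is non-trivial.

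\emph{(Only if) Rigidity.} Assume $T$ is acute and every pair of sides is either equal in length or in irrational ratio. Let $\mathcal{T}$ be any reptiling of $T$, with small tiles of side lengths $a_0, b_0, c_0$ satisfying $a_0:b_0:c_0 = a:b:c$; the goal is to show $\mathcal{T}$ is trivial. I would combine three ingredients. First, a \emph{boundary analysis}: on each side of $T$ of length $L = n \cdot (\text{corresponding small-tile side})$, the incident tiles induce a partition into segments of lengths in $\{a_0, b_0, c_0\}$, yielding a Diophantine relation that, under the hypothesis of irrational or unit ratios among $a,b,c$, forces the uniform $n$-equal-pieces partition. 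Second, an \emph{interior angle analysis}: at any interior vertex of $\mathcal{T}$ the surrounding angles are drawn from $\{\alpha, \beta, \gamma\}$ and must sum to $2\pi$; using $\alpha+\beta+\gamma = \pi$ and the acuteness of all three angles, the admissible multisets of angles are highly constrained and all match configurations at grid vertices. Third, a \emph{propagation}: starting from the forced trivial partition on the boundary, I would extend the grid structure inward one row of tiles at a time, the local angle/length constraints forcing each new tile into a grid position.

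\emph{Main obstacle.} The rigidity direction is the harder of the two. The delicate technical point is that irrationality of the pairwise ratios among $a,b,c$ does not automatically yield $\mathbb{Q}$-linear independence of $(a_0, b_0, c_0)$; an accidental relation such as $a_0 + b_0 = c_0$ could in principle spoil the uniqueness of the boundary partition. To rule such relations out I would have to invoke the law-of-cosines identity $c_0^2 = a_0^2 + b_0^2 - 2 a_0 b_0 \cos\gamma$ together with the acuteness of $T$, and handle the isoceles sub-case $a = b$ (where $a_0 = b_0$ and the Diophantine argument collapses) separately, using the surviving irrational ratio $c_0/a_0$ and the mirror symmetry of $T$ across its perpendicular bisector. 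Making this case analysis tight, while still covering all admissible angle configurations in the propagation, is where I expect the bulk of the work to lie.
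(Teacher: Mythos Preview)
Your ``if'' direction is essentially the paper's: flip a rhombus inside the trivial $n^2$-grid. One correction, though: after mirroring the rhombus, every small-tile edge is \emph{still} parallel to a side of $T$ (the reflection swaps the $a$- and $b$-directions). Non-triviality holds not because of non-parallel edges, but because the interior vertices of the flipped rhombus no longer sit on the ambient grid lines; since the boundary of $T$ already determines the only candidate grid, no global grid can underlie the tiling.

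The ``only if'' direction has a genuine gap. You correctly identify the crux---pairwise irrationality of $a_0:b_0:c_0$ does not force $\mathbb{Q}$-linear independence---but your plan to \emph{rule out} an accidental relation via the law of cosines and acuteness will not succeed. Such relations do occur for irrational acute triangles: for instance $a=1$, $b=\sqrt{2}$, $c=3-\sqrt{2}$ gives an acute triangle with all three pairwise ratios irrational yet $3a=b+c$. So the Diophantine boundary argument does \emph{not} force the uniform partition of every side, and your propagation scheme loses its starting data.

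The paper does not try to exclude this case; it splits into two classes. Class (i), where no such relation hits the two longer edges, is handled by a ``sawtooth'' lemma: along any grid-length segment of a $CA$-parallel, the right-adjacent tiles are forced to be translates of $T$, and an induction over the staircase boundary of the maximal trivially-tiled region shows that region must be all of $T$. Class (ii), where the shortest edge satisfies $\lambda a=\mu b+\nu c$, requires a separate induction running in the opposite direction along the staircase; acuteness enters only here, and not through the law of cosines but through an angle-divisibility observation (if $\beta/\alpha\in\mathbb{N}$ then acuteness forces $\gamma/\alpha\notin\mathbb{N}$, which pins down the $\gamma$-count at every half vertex). Your outline does not anticipate this bifurcation, and the angle-configuration enumeration you sketch is not how the argument actually closes.
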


We are interested in the question what non-trivial reptilings are possible for acute triangles particularly in the context of the third question: which tilings support the definition of a face-continuous space-filling curve? Consider a reptiling ${\mathcal T}$ of an acute triangle $T$. If multiple tiles of ${\mathcal T}$ meet at a vertex $v$ of $T$, we say $v$ is a \emph{fan} in ${\mathcal T}$. If there is only a single tile touching a vertex $v$ of $T$ and the interior edge of that tile (the edge opposite to $v$) is a union of edges of multiple adjacent tiles, then we say $v$ is a \emph{cap} in ${\mathcal T}$. As we will see in Section~\ref{sec:sfc}, to construct a face-continuous space-filling curve we need tilings that have caps or fans, but these were absent in all of the reptilings of acute triangles that we have found in the literature. In Section~\ref{sec:cornersplittingtilings} we prove the following result on fans (which, to some extent, also applies to gentilings):

\begin{theorem} \label{main2}
There exists an acute triangle $T$ that admits a reptiling in which one of the vertices of $T$ is adjacent to $k$ tiles if and only if $k = 1$ or $k = 2$.
\end{theorem}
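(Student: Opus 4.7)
The proof is a biconditional; I would handle the existence direction by construction and the impossibility for $k\ge 3$ via an angle reduction followed by a geometric argument.

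\textbf{Existence for $k\in\{1,2\}$.} Case $k=1$ follows from the trivial $n^2$-reptiling with $n\ge 2$: one tile occupies each corner of $T$ and its angle at the corner matches $T$'s. For case $k=2$, the angle reduction below will show that a $2$-fan forces one angle of $T$ to equal twice another. I would pick an acute triangle with this property and rational side-ratios---for instance, the triangle with sides in ratio $6:4:5$, whose angles satisfy $\cos\beta=3/4$ and $\cos\alpha=2(3/4)^2-1=1/8$, so that $\alpha=2\beta$. By Theorem~\ref{main1} this triangle admits non-trivial reptilings; I would exhibit one in which two mirror-image tiles share a common edge along the bisector of $\alpha$, each seated with its $\beta$-vertex at the $\alpha$-corner of $T$, and complete $T$ with further congruent copies.

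\textbf{Angle reduction for $k\ge 3$.} Let $v$ be a corner of $T$ with angle $\alpha_v$ and write $n_\alpha,n_\beta,n_\gamma$ for the number of fan tiles contributing angles $\alpha,\beta,\gamma$ at $v$:
\[
n_\alpha\alpha+n_\beta\beta+n_\gamma\gamma=\alpha_v,\qquad n_\alpha+n_\beta+n_\gamma=k.
\]
Acuteness of $T$ means $\alpha,\beta,\gamma<\pi/2$, equivalently $\beta+\gamma>\alpha$, $\alpha+\gamma>\beta$, $\alpha+\beta>\gamma$. A short case analysis on which angle of $T$ is $\alpha_v$, combined with these inequalities, rules out every mixed contribution (in particular $n_\beta\ge 1$ and $n_\gamma\ge 1$ simultaneously) and leaves only two possibilities: either $n_\alpha=1$ with $k=1$, or $n_\alpha=0$, one of $n_\beta,n_\gamma$ vanishes and the other equals $k$, giving $k\theta=\alpha_v$ for an angle $\theta$ of $T$ strictly smaller than $\alpha_v$. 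For $k\ge 3$ this forces $\theta<\pi/6$, and the fan can only sit at the largest-angle or middle-angle corner of $T$.

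\textbf{Geometric obstruction.} Each of the $k$ fan tiles has its $\theta$-vertex at $v$; hence the two edges meeting $v$ in each tile have scaled lengths $p',q'$ (the scaled versions of the two sides of $T$ adjacent to its $\theta$-vertex). Walking around $v$, one records for each tile $i$ whether the left edge at $v$ is $p'$ and the right is $q'$, or vice versa. Along each of the $k-1$ internal rays, the two adjacent tiles either share a common edge (edge-to-edge, in which case the outer endpoint of the ray hosts two tile corners whose angles lie in $\{\alpha,\beta,\gamma\}\setminus\{\theta\}$ and whose combined measure imposes a new angle-sum condition for tiles filling the remainder around that point), or they meet in a T-junction that propagates a length mismatch outward, forcing a further tile to begin at the short edge's endpoint. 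My plan is to combine these local angle-sum and length conditions at every internal ray with the two global length equations along the sides of $T$ bounding the fan, to derive a Diophantine contradiction in the spirit of Theorem~\ref{main1}. The principal obstacle is treating T-junction configurations uniformly with edge-to-edge ones, since propagated mismatches may interact with tiles outside the fan in several orientations; acuteness (the bound $\alpha_v<\pi/2$) is essential in ruling out identities available to right or obtuse triangles.
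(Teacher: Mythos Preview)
Your angle reduction (every $k$-fan in an acute triangle must consist of $k$ copies of the smallest angle $\alpha$ seated at a vertex of angle $k\alpha$) is correct and matches the paper's Observation~\ref{obs:splitone}. Both substantive halves of the proof, however, are not carried through.

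\textbf{The $k=2$ construction.} You correctly identify the $4{:}5{:}6$ triangle (equivalently $\cos\alpha=3/4$), but ``exhibit two mirror-image tiles along the bisector and complete $T$ with further congruent copies'' is not a construction. Placing two tiles with their $\alpha$-vertices at the $2\alpha$-corner leaves a non-triangular region whose boundary mixes edges of lengths $b$ and $c$, and it is far from obvious that this region can be tiled by congruent copies of $T$. The paper's construction (Section~\ref{sec:splittingconstruction}) is an explicit seven-piece assembly together with a $2p(r-p)$-tile parallelogram, scaling to a $(2p+r)^2$-reptiling; for $(p,q,r)=(4,6,5)$ this gives $169$ tiles. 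You need to actually produce such a tiling, not assert that one exists by appeal to Theorem~\ref{main1} (which guarantees \emph{some} non-trivial reptiling, not a corner-splitting one).

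\textbf{The $k\ge 3$ impossibility.} Your ``geometric obstruction'' is only a plan, and the plan is local: you propose to analyse edge-lengths and angle-sums along the $k-1$ internal rays of the fan and hope for a Diophantine contradiction. There is no mechanism here that controls what happens away from the fan, and T-junctions can propagate arbitrarily far into the tiling before any constraint bites. The paper's argument is global and of a completely different character. It uses the Euler-formula identity $r=2f+h+1$ (Lemma~\ref{lem:euler}) to count how many $\delta$-angles (with $\delta=k\alpha$) and how many large angles the vertices of the tiling can absorb. Splitting the $\delta$-corner removes one $\delta$-slot, so some half vertex must carry two $\delta$'s or some full vertex three; ruling these out forces $\alpha=2\pi/m$ with $m$ odd (Lemma~\ref{lem:alphadiv2pi}). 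A second counting pass on $\beta$- and $\gamma$-angles then forces a full vertex with at least five large angles, hence $5\beta\le 2\pi$, which bounds $k\le 3$ and isolates exactly two candidate triangles (Theorem~\ref{thm:splittingangles}). Finally, both candidates turn out to be irrational, so Theorem~\ref{main1} kills any non-trivial---in particular any $3$-splitting---reptiling (Theorem~\ref{thm:no3splittingreptiling}). None of this structure is visible from your local ray-by-ray analysis; the key idea you are missing is the global pigeonhole on angle counts over all vertices.
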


\paragraph{What triangular reptilings support face-continuous space-filling curves?}
Theorem~\ref{main2} confirms the existence of reptilings of acute triangles with fans. However, in Section~\ref{sec:sfc} we find that the conditions of Theorems \ref{main1} and~\ref{main2} are sufficiently restrictive to be able to derive the following negative result:

\begin{theorem} \label{main3}
There is no face-continuous space-filling curve whose construction is based on a reptiling of an acute triangle.
\end{theorem}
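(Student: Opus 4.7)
The plan is to suppose, for contradiction, the existence of a face-continuous space-filling curve $f$ based on a sequence of $r$-reptilings $\mathcal{T}_1, \mathcal{T}_2, \ldots$ of an acute triangle $T$, and to derive an inconsistency from Theorems~\ref{main1} and~\ref{main2} together with the recursive tile structure.

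The first step is to establish a corner-to-corner transition property: whenever $T_i$ and $T_{i+1}$ are consecutive tiles in some $\mathcal{T}_n$, face-continuity forces them to share an edge, and the transition point $p_i$ at which the curve passes from $T_i$ to $T_{i+1}$ lies on that edge. Applying the same reasoning recursively inside $T_i$ and $T_{i+1}$, the sub-curves also have well-defined entry and exit points on shared edges one level down, and iterating pins $p_i$ to a point that is a vertex of arbitrarily small sub-tiles on both sides, hence a shared vertex of $T_i$ and $T_{i+1}$ themselves. In particular $f(0)$ and $f(1)$ must be vertices of $T$. By Theorem~\ref{main2}, at most two tiles of $\mathcal{T}_1$ (or of any sub-reptiling) meet at any vertex of their parent, so each of the three vertices of $T$ is either a cap (one incident tile) or a 2-fan (two incident tiles), and the curve enters and leaves each corner in only a handful of combinatorially distinct ways.

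The main obstacle is then a finite case analysis over these local configurations at the three vertices of $T$. For each combination of cap versus 2-fan at the three corners and each valid corner-to-corner traversal pattern, I would ask whether the construction can be extended recursively. The expected outcomes split into two cases. If the refinement at some level is a \emph{non-trivial} reptiling, Theorem~\ref{main1} forces $T$ to have a pair of sides with rational ratio $p/q$, $p \neq q$; this, combined with the fan restriction from Theorem~\ref{main2} and the corner-to-corner transition property, should admit no consistent recursive continuation. If instead every refinement is a \emph{trivial} $n^2$-reptiling, one needs a direct geometric argument showing that the grid triangulation of an acute triangle admits no corner-to-corner Hamiltonian face-adjacency path that is itself recursively face-continuous. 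Handling this trivial sub-case, where Theorem~\ref{main1} offers no direct leverage and one must exploit acuteness of $T$ together with the rigidity of the grid geometry, is where I expect most of the technical work to lie.
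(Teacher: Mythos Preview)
Your plan has two genuine gaps.

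First, the corner-to-corner transition claim does not follow from the argument you sketch. You argue that the transition point $p_i$ is a vertex of arbitrarily small sub-tiles on both sides and conclude it must be a shared vertex of $T_i$ and $T_{i+1}$ themselves. That inference is invalid: a point on the common edge of $T_i$ and $T_{i+1}$ can be a vertex of sub-tiles at every deeper level without being a vertex of $T_i$ or $T_{i+1}$ (think of the Hilbert curve on squares, where every transition is at an edge midpoint). So you cannot conclude that $f(0)$ and $f(1)$ are master vertices, nor that the curve through a tile runs ``corner to corner''. Relatedly, a master vertex with a single incident tile is not automatically a cap: in the trivial tiling the corner tile's interior edge meets exactly one neighbour, so ``cap or 2-fan'' is a false dichotomy.

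Second, the trivial/non-trivial split is not the right lever, and neither branch is close to complete. In the non-trivial branch, Theorem~\ref{main1} only tells you $T$ is rational; it does not by itself obstruct a face-continuous curve, and ``should admit no consistent recursive continuation'' is not an argument. In the trivial branch you explicitly defer the work. The paper avoids this dichotomy altogether. Its proof runs through the dual graph and a \emph{two-level} reptiling: one shows (Lemma~\ref{lem:uniquepath}) that in a scalene acute triangle at most \emph{one} master vertex can ever be a fan or a cap, so any Hamiltonian path through a reptiling is forced to start and end in the other two corners. Then, looking at the intermediate tile $A$ in the $\alpha$-corner and its uniquely placed neighbour $A'$, one checks that the last atomic tile of $A$ cannot touch either permissible starting atomic tile of $A'$ once the atomic tiles are small enough (Lemma~\ref{lem:noconformingpath}). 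The isosceles case is handled separately, using Theorem~\ref{thm:noisoscelesksplittingreptiling} to rule out fans and a direct blocking argument for caps. None of this depends on whether individual reptilings are trivial or not, and it bypasses the need for your corner-to-corner claim.
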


\paragraph{Contents of this article}
Further terminology, notation, and a key lemma underlying all of our results are introduced in Section~\ref{sec:notation}.
Theorem~\ref{main1} is proven in Section~\ref{sec:irrational}. In fact, we show that this result partially extends to obtuse triangles as well. Section~\ref{sec:intermezzo} proves that any non-trivial reptiling of an oblique triangle must contain points where a vertex of one tile lies on the interior of an edge of another tile. This is ultimately without consequence for our final results, but may be useful in solving the open problems that remain unsolved by our work. Theorem~\ref{main2} is proven in Section~\ref{sec:cornersplittingtilings}, and Theorem~\ref{main3} is proven in Section~\ref{sec:sfc}. We summarize our results and identify unanswered questions in Section~\ref{sec:furtherresearch}.

\section{Preliminaries}\label{sec:notation}
Let $T$ be a triangle with corners $A,B$ and $C$, and respective angles $\alpha$, $\beta$ and $\gamma$. Consider now a valid $r$-gentiling for $T$ with tiles $T_1, \ldots, T_r$. Call this tiling $\mathcal T$. For more convenient description of $\mathcal T$, we associate with it a graph $G_{\mathcal T}$, which is defined as follows. Each point that is a corner of one or more tiles $T_i$ (for $1\leq i \leq n$) constitutes a vertex of $G_{\mathcal T}$, and each maximal segment of the boundary of $T$ or the boundary between tiles that does not contain a vertex in its interior constitutes an edge of $G_{\mathcal T}$. Note that $G_{\mathcal T}$ is a simple graph, whose vertex set also includes $A,B$ and~$C$.

If we consider a vertex of $G_{\mathcal T}$, we can classify it with respect to the number of tiles that are adjacent to it. We call a tile adjacent to a vertex in two different forms. A vertex $v$ has a \emph{corner-adjacency} to some tile if $v$ is one of the tile's corners. On the other hand, $v$ has an \emph{edge-adjacency} to some tile if $v$ lies on one of the sides of this tile (and is not one of its corners). Thus we can distinguish three types of vertices:
\begin{compactitem}
	\item The \emph{master vertices} are the three vertices $A, B, C$---the corners of $T$.
	\item \emph{Half vertices} are the vertices that either lie on the interiors of the sides of $T$ or that have exactly one edge-adjacency. In the latter case, that is, if the half vertex lies in the interior of $T$, it is also called a \emph{hanging vertex}.
	\item \emph{Full vertices} are vertices which are not master vertices and have no edge-adjacencies.
\end{compactitem}
Note that the union of these three types of vertices is the whole vertex set of $G_{\mathcal T}$. We now refine corner-adjacencies as follows. We say that tile $T_i$ is $\zeta$-adjacent to vertex $v$ if the corner of $T_i$ that coincides with $v$ is of angle $\zeta$ (with $\zeta$ taking values $\alpha, \beta$ and $\gamma$). Additionally, call $\deg_{\zeta}(v)$ the $\zeta$-degree of vertex $v$ and let it denote the number of tiles $\zeta$-adjacent to vertex $v$.

\begin{figure}
\centering
\includegraphics[width=\hsize]{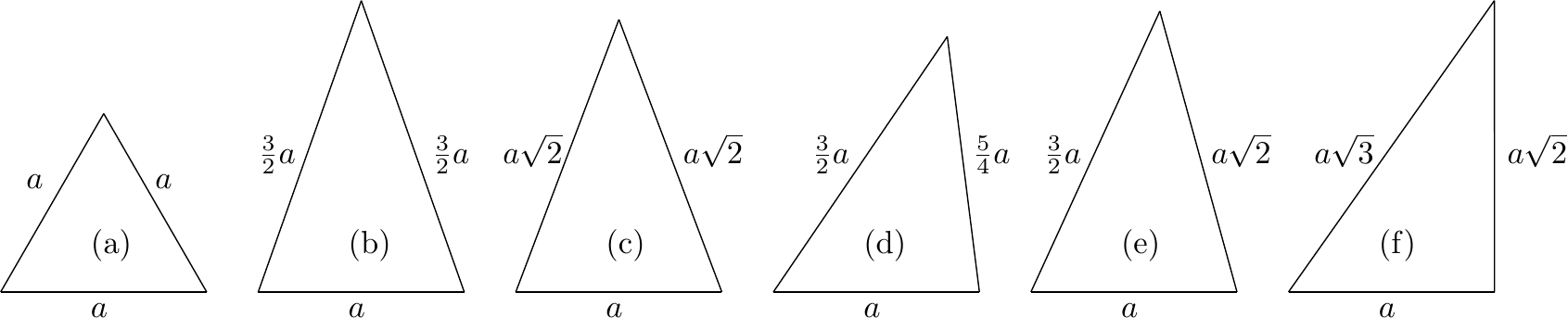}
\caption{Examples of rational triangles (b,d,e) and irrational triangles (a,c,f).}
\label{fig:definitionrationality}
\end{figure}

We call a triangle \emph{rational} if it has at least one pair of sides such that the length of one side divided by the length of the other is a rational number other than one. We call a triangle \emph{irrational} if it is not rational, see Figure~\ref{fig:definitionrationality}

The following consequence of Euler's formula will prove useful in the following sections:

\begin{lemma}\label{lem:euler}
If ${\cal T}$ is an $r$-gentiling with $f$ full vertices and $h$ half vertices, we have $r = 2f + h + 1$.
\end{lemma}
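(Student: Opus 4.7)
The plan is to apply Euler's formula $V - E + F = 2$ to the planar graph $G_{\mathcal T}$ and pair it with a face-boundary count. Since $V = 3 + f + h$ (three master vertices together with the full and half vertices) and $F = r + 1$ (one inner face per tile plus the outer face), Euler immediately gives $E = r + f + h + 2$.

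For the second ingredient I would double-count edge-face incidences. Each tile $T_i$ is a triangle whose three geometric sides are subdivided into graph edges by precisely the vertices edge-adjacent to $T_i$; if $\varepsilon_i$ denotes that number, then $T_i$ contributes $3 + \varepsilon_i$ edges to the boundary-incidence count. The outer face contributes $3 + h_b$ edges, where $h_b$ is the number of half vertices lying on the interior of the sides of $T$. Since each graph edge bounds exactly two faces, $2E = 3r + 3 + h_b + \sum_v e(v)$, where $e(v)$ is the number of tiles edge-adjacent to $v$.

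The heart of the argument is to identify $\sum_v e(v)$ with the number $h_i := h - h_b$ of hanging vertices. By definition, $e(v) = 0$ at every master and full vertex and $e(v) = 1$ at every hanging vertex. The subtle case is a boundary half vertex $v$: only an angular sector of $\pi$ is available on the interior side of $T$ at $v$, so a single edge-adjacency would already consume the entire sector, leaving no room for a corner-adjacency; since $v$ is by definition of $G_{\mathcal T}$ a corner of some tile, this forces $e(v) = 0$. Hence $\sum_v e(v) = h_i$, which gives $2E = 3r + 3 + h$, and combining this with $E = r + f + h + 2$ and rearranging yields $r = 2f + h + 1$. The only non-trivial step is the angle argument ruling out edge-adjacencies at boundary half vertices; the rest is routine Euler bookkeeping.
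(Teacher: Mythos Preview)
Your proof is correct and follows essentially the same approach as the paper: Euler's formula combined with a double count of edge--face incidences, yielding $2E = 3(r+1) + h$ and hence $r = 2f + h + 1$. The only difference is presentational: the paper simply asserts that ``each half vertex lies in the interior of a side of only one face,'' whereas you split $h = h_b + h_i$ and supply the angle argument showing that a boundary half vertex cannot carry any tile edge-adjacency---a point the paper leaves implicit in its vertex classification.
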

\begin{proof}
Let $e=e(G_{\mathcal T})$ be the number of edges of the graph $G_{\cal T}$. Taking into account the three master vertices and the exterior face of the tiling, Euler's formula gives us $(f + h + 3) + (r + 1) = e + 2$, or equivalently:\begin{equation}\label{eq:euler}
2e = 2r + 2f + 2h + 4
\end{equation}
Each face, that is, each tile as well as the exterior face, has three edges, plus one additional edge for each half vertex that lies in the interior of a side of the face. Since each half vertex lies in the interior of a side of only one face, the number of edges summed over all faces is therefore $3(r + 1) + h$. Note that this counts each edge twice (once from each side), so we have $2e = 3r + h + 3$. Using this to substitute $2e$ in Equation~\eqref{eq:euler}, we get the claimed equality $r = 2f + h + 1$.
\end{proof}

\section{Triangles that admit non-trivial reptilings}
\label{sec:irrational}
In this section we prove Theorem~\ref{main1}: an acute triangle admits a non-trivial reptiling if and only if it is rational. Since the trivial reptiling has neither caps nor fans, this implies that no face-continuous space-filling curves can be constructed based on recursively reptiling an irrational acute triangle. In fact, as we will see below and in Section~\ref{sec:furtherresearch}, the results also extend to certain classes of obtuse triangles.

Throughout this section, we maintain the following notation. We consider an $n^2$-reptiling of a triangle $T$ with vertices $A$, $B$ and $C$ and respective angles $\alpha$, $\beta$ and $\gamma$. The sides of the $T$ opposite of $A, B, C$ are denoted by $BC$, $CA$ and $AB$, respectively, and their lengths are $n\cdot a,n\cdot b,n\cdot c$, respectively. Hence, the small, mutually congruent copies of $T$ that are used to tile $T$ have side lengths $a$, $b$ and $c$. A single such tile is denoted by $T_i$. For $e \in \{BC,CA,AB\}$, an \emph{$e$-parallel} is a line parallel to $e$.

\subsection{A non-trivial tiling of a rational triangle}
\label{sec:rational}

\begin{figure}
  \centering
	\includegraphics{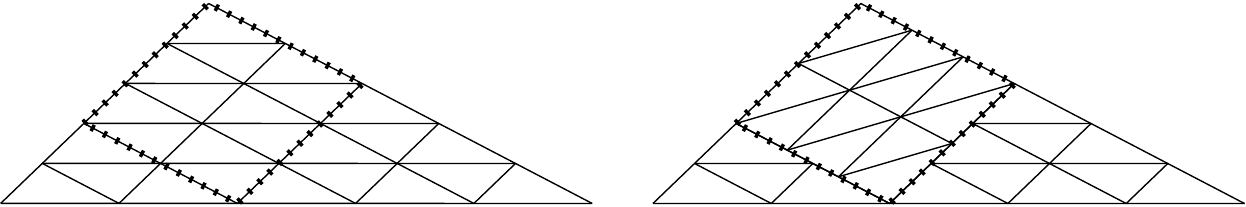}
	\caption{Construction of a non-trivial tiling out of the trivial tiling.} \label{fig:nontriv}
\end{figure}

We start this section by giving a construction for a non-trivial reptiling of any rational triangle, thereby proving that rationality is a sufficient condition for such a reptiling to exist. Let $T$ be a rational triangle, and without loss of generality, assume:
\begin{align*}
p \cdot a = q \cdot b \tag{*}
\end{align*}
for two natural numbers $p > q$. To find a non-trivial tiling, we require $n \geq p+q$. Now consider the trivial $n^2$-tiling of $T$. In this tiling we can find a parallelogram where one side consists of $p$ tile edges of length $a$ and another side consists of $q$ tile edges of length $b$. Due to $n \geq p+q$, this parallelogram must exist. Furthermore, by (*), this parallelogram is a rhombus. We can now ``flip'' the trivial sub-tiling inside the rhombus, that is, mirror it at either of its axes, without interfering with the tiling outside the rhombus. This yields the desired non-trivial tiling. Figure~\ref{fig:nontriv} shows an example for $p=3$ and $q=2$. Thus we get:

\begin{theorem}\label{thm:nontrivialconstruction}
If $T$ is a triangle with a pair of sides with lengths $a$ and $b$, such that $b/a$ is a rational number $p/q$, where $p, q \in \mathbb{N}$ and $p \neq q$, then there is a non-trivial $r$-reptiling of $T$ for any square number $r \geq (p+q)^2$.
\end{theorem}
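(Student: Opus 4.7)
The plan is to realize the non-trivial reptiling by a local modification of the trivial $n^2$-reptiling, for any $n\geq p+q$, following exactly the construction sketched just above the theorem. The proof breaks into three parts: locating a suitable rhombus inside the trivial tiling, performing a ``flip'' (reflection) of its interior sub-tiling, and verifying that the result is both a valid reptiling and genuinely non-trivial.

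For the first part, I would argue that the trivial $n^2$-reptiling of $T$ contains a parallelogram $R$ whose two sides of one type consist of $p$ collinear tile edges of length $a$ lying along a grid line parallel to the length-$a$ side of $T$, and whose other two sides consist of $q$ collinear tile edges of length $b$ lying along a grid line parallel to the length-$b$ side of $T$. The assumption $n \geq p+q$ provides enough room inside $T$ for such an $R$: one can place $R$ with a corner near the vertex of $T$ where the sides of lengths $na$ and $nb$ meet, since the sum of $p$ steps along one direction and $q$ steps along the other can be accommodated within $T$. The relation $pa=qb$ from (*) then upgrades $R$ to a rhombus, and the trivial sub-tiling restricted to $R$ consists of $2pq$ small copies of $T$ arranged in a pattern that is invariant under reflection across either diagonal of $R$.

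For the second part, I would replace the interior sub-tiling of $R$ with its reflection across one of the two diagonals of $R$. Since this diagonal is an axis of symmetry of $R$, the reflection maps the boundary of $R$ to itself (as a set of edges), so the reflected tiles inside $R$ still meet the unchanged tiles outside $R$ along matching edges. Each reflected tile remains congruent to $T$, so the modified subdivision is a valid $n^2$-reptiling of $T$.

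The main obstacle is the third part: showing that the modified reptiling is non-trivial, i.e., does not arise from any underlying regular triangular grid on $T$. I would argue via edge directions. In any trivial tiling of $T$, every tile edge is parallel to one of the three sides of $T$. Inside the original rhombus $R$, the trivial sub-tiling contains edges parallel to each of the three sides of $T$; after reflection across a diagonal of $R$, edges parallel to the two rhombus sides remain parallel to those same two sides of $T$ (just swapped by the reflection), but edges parallel to the third side of $T$ are sent to a new direction, namely the reflection of that side in the diagonal. A short coordinate computation, using that $p\neq q$ forces $a\neq b$ (since $pa=qb$), shows that this new direction is not parallel to any side of $T$. Hence the modified reptiling contains tile edges in a direction that cannot appear in any trivial reptiling of $T$, so it cannot itself be trivial. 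This completes the proof for $r=n^2$ with arbitrary $n\geq p+q$.
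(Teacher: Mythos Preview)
Your construction is exactly the paper's: locate a rhombus $R$ inside the trivial $n^2$-tiling (using $n\geq p+q$ and $pa=qb$) and reflect its contents across a diagonal. Your edge-direction argument for non-triviality is a welcome elaboration---the paper simply asserts that the flip yields a non-trivial tiling---and it is correct: since $a\neq b$, the side of length $c$ is not perpendicular to the bisector of the angle between the $a$- and $b$-directions, so the reflection sends the $c$-direction to a genuinely new fourth direction, which cannot occur in any trivial tiling.

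There is, however, an internal contradiction in your writeup. In the first part you assert that the trivial sub-tiling of $R$ is ``invariant under reflection across either diagonal of $R$.'' This is false, and if it were true the flip would change nothing and the result would still be the trivial tiling. What is invariant under the reflection is the \emph{region} $R$; the sub-tiling is not, precisely because the reflection moves $c$-direction edges to the new direction you yourself exploit in the third part. You should simply delete the invariance claim. A related imprecision: the reflected interior tiles do not meet the exterior tiles ``along matching edges.'' Along a side of $R$ in the $b$-direction, the exterior tiling contributes $q$ edges of length $b$ while the flipped interior contributes $p$ edges of length $a$, so the subdivision points disagree and the flip creates hanging (half) vertices on $\partial R$. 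This is permitted by the definition of a reptiling, so validity is not at stake, but the sentence should be rephrased to say only that the reflection preserves $\partial R$ as a union of segments.
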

	
\subsection{All reptilings of irrational acute triangles are trivial}
\label{sec:irrationalistrivial}

We will now prove the following:
\begin{theorem}\label{thm:onlytrivial}
If $T$ is a triangle \emph{without} any pair of sides with lengths $a$ and $b$, such that $b/a$ is a rational number other than 1, and $T$ is (i) acute, or (ii) isosceles and oblique, then all possible reptilings of $T$ are trivial.
\end{theorem}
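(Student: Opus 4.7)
The plan is to suppose $\mathcal T$ is an $n^2$-reptiling of $T$, where $T$ is acute or isosceles-oblique and irrational, and to show $\mathcal T$ coincides with the trivial reptiling. I would split the proof into (i) a local analysis at each of the three master vertices of $T$, (ii) an analysis of how each side of $T$ decomposes into tile edges, and (iii) a peeling argument that propagates the boundary structure into the interior.

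At the master vertex $A$, the angles of the incident tiles satisfy $p\alpha+q\beta+s\gamma=\alpha$ for non-negative integers $p,q,s$ with $p+q+s\ge 1$. Using $\alpha+\beta+\gamma=\pi$, this translates into a non-trivial $\mathbb Q$-linear relation among $\alpha,\beta,\gamma$. The law of sines converts the side-ratio irrationality into irrationality of the $\sin\alpha,\sin\beta,\sin\gamma$ ratios, and combined with the acuteness (or isosceles-oblique) restriction on the individual angles, this rules out all solutions other than $p=1,q=s=0$; the same holds at $B$ and $C$, so each master corner is occupied by a single tile. Side $BC$ of length $na$ then has a tile-edge decomposition $xa+yb+zc=na$. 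Coupling this length equation with the angle identity $p_v\alpha+q_v\beta+s_v\gamma=\pi$ at each interior half-vertex $v$ on $BC$, and using the irrationality to rule out mixed configurations, I would show the decomposition is forced to be $n$ equal $a$-edges, and similarly for $CA$ and $AB$. With the boundary then matching the trivial reptiling, the outermost layer of tiles is uniquely determined; peeling it off leaves a similar triangle of scale $n-1$, reptiled by the remaining tiles, and induction on $n$ concludes.

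The hardest step will be the second one: the length equation $xa+yb+zc=na$ alone does not force $x=n,y=z=0$, because the irrationality hypothesis only forbids pairwise rational ratios and permits triples like $\{1,\sqrt{2},2+3\sqrt{2}\}$ that are $\mathbb Q$-linearly dependent. One must therefore simultaneously exploit the edge-length equation and the angle equations at each half-vertex, propagating along $BC$ so that each successive vertex contributes an independent constraint. Additional care is needed when the angles themselves satisfy a non-trivial $\mathbb Q$-relation modulo $\pi$ (as for example when $\alpha=\pi/4$); the acute or isosceles-oblique hypothesis is precisely what excludes those degenerate configurations. In the isosceles-oblique case the equality $a=b$ collapses some of the side equations but introduces an extra reflection symmetry at the apex, calling for a slightly modified version of the master-vertex argument.
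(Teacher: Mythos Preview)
Your proposal has a genuine gap at step (i). You claim that at a master vertex with angle $\beta$, say, the equation $p\alpha+q\beta+s\gamma=\beta$ is forced to have only the solution $(p,q,s)=(0,1,0)$ because the irrationality of the side ratios translates (via the law of sines) into irrationality of the $\sin$-ratios. But irrationality of $\sin\beta/\sin\alpha$ does \emph{not} preclude an integer relation among the angles themselves. Concretely, take any $\alpha$ with $\pi/6<\alpha<\pi/4$ and $\cos^2\alpha\notin\mathbb Q$; then the triangle with angles $\alpha,\,2\alpha,\,\pi-3\alpha$ is acute, its side ratios $1:2\cos\alpha:(4\cos^2\alpha-1)$ are all irrational, yet at the vertex of angle $2\alpha$ the equation has the nontrivial solution $(p,q,s)=(2,0,0)$. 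So the master-vertex analysis cannot be settled by irrationality of side lengths alone; ruling out a fan there requires global information about the tiling, not just the local angle equation. This breaks your proposed order of argument, since steps (ii) and (iii) rely on step (i) already being done.

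The paper's proof is organised quite differently and does not try to analyse the master vertices first. Instead it classifies irrational triangles by which (if any) side length admits an integer combination of the other two, proves a \emph{sawtooth lemma} governing how tiles must line up along a $CA$-parallel segment, and then runs a maximal-region argument: letting $R$ be the largest ``staircase'' region tiled exactly as in the trivial tiling, it shows that if $R\neq T$ then the tiles just outside $R$ are forced (by the sawtooth lemma and the impossibility of matching mixed edge lengths) to \emph{overshoot} a corner of the staircase, producing a contradiction at the boundary of $T$. The fact that master vertices are not split is then a \emph{consequence} of the whole tiling being trivial, not an input to the proof. Your peeling-by-layers idea is appealing, but to make it work you would need something like the sawtooth lemma to pin down the orientation of each boundary tile, and you would still have to handle the class-(ii) situation (shortest side is a rational combination of the others), which is where the acuteness hypothesis is actually used in the paper.
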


We start with the following lemma, which gives an important property of the sequences of tiles that may meet along a line segment.

\begin{lemma}
Let $T$ be an irrational scalene triangle. Then there can be only one edge length $e \in \{a,b,c\}$ such that a certain chain of edges of length $e$ has the same length as a certain chain of edges of one or two of the other lengths.
\end{lemma}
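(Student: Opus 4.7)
The plan is to argue by contradiction. Suppose two of the three edge lengths, say $a$ and $b$, can each play the role of $e$. Then there exist nonnegative integer coefficients with $\alpha_1,\alpha_2\ge 1$ and nonempty right-hand sides such that
\[
\alpha_1 a = \beta_1 b + \gamma_1 c \quad\text{and}\quad \alpha_2 b = \beta_2 a + \gamma_2 c.
\]
The aim is to extract from this system a rational relation between two of the side lengths, which will contradict the irrationality of $T$.

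First I would dispose of the degenerate cases $\gamma_1=0$ and $\gamma_2=0$. If $\gamma_1=0$, the first equation reduces to $\alpha_1 a = \beta_1 b$; the nonempty right-hand side then forces $\beta_1\ge 1$, giving $a/b = \beta_1/\alpha_1 \in \mathbb{Q}$, which contradicts irrationality (note that scalene rules out the ratio being $1$, but the mere rationality already contradicts the definition of irrational triangle since $a\neq b$). The case $\gamma_2=0$ is symmetric, so we may assume $\gamma_1,\gamma_2\ge 1$.

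Having reduced to the case where $c$ appears in both equations, I would eliminate $c$ by multiplying the first equation by $\gamma_2$ and the second by $\gamma_1$ and subtracting. This yields
\[
(\gamma_2\alpha_1 + \gamma_1\beta_2)\,a \;=\; (\gamma_1\alpha_2 + \gamma_2\beta_1)\,b.
\]
Each coefficient is a sum of nonnegative integers containing a strictly positive term ($\gamma_2\alpha_1\ge 1$ on the left, $\gamma_1\alpha_2\ge 1$ on the right), so both are positive integers. Therefore $a/b$ is rational, again contradicting the irrationality of $T$.

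I do not foresee any real obstacle: the argument is a short elimination between two nonnegative integer linear combinations, relying only on the basic arithmetic of rational multiples. The only subtlety is the bookkeeping of which coefficients are guaranteed to be positive, but this follows immediately from the hypotheses $\alpha_i\ge 1$ and (after the first step) $\gamma_i\ge 1$.
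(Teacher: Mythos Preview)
Your argument is correct and follows essentially the same approach as the paper: assume two distinct edge lengths each admit such a chain relation, then eliminate the third side length $c$ from the resulting pair of integer linear relations to force a rational ratio between the remaining two sides, contradicting irrationality. The paper phrases this as ``two linearly independent triples satisfying $\lambda a+\mu b+\nu c=0$ cannot coexist,'' while you carry out the elimination explicitly with the nonnegative-coefficient form and handle the degenerate $\gamma_i=0$ cases separately; the content is the same.
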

\begin{proof}
Assume there are non-zero integers $\lambda_1, \mu_1, \nu_1$ such that
\begin{align} \label{lincomb}
\lambda_1 a + \mu_1 b + \nu_1 c = 0.
\end{align}
Now, suppose there is another triple of integers $(\lambda_2, \mu_2, \nu_2)$  linearly independent of $(\lambda_1, \mu_1, \nu_1)$ such that $\lambda_2 a + \mu_2 b + \nu_2 c = 0$. Eliminating one of the side lengths (say, $c$), we get that $a/b$ is rational, which contradicts the assumption that $T$ is irrational. Hence there can be at most one triple (up to scaling) satisfying Equation~\eqref{lincomb}. This equation can be interpreted as follows: when Equation~\eqref{lincomb} holds, there is exactly one edge length from $\{a,b,c\}$ (namely the one whose coefficient's sign differs from the other two coefficients' signs) such that a multiple of that edge length can be covered with a combination of the other two.
\end{proof}

The above lemma implies that, without loss of generality, any irrational triangle falls into exactly one of two classes as specified by the conditions given below:
\begin{itemize}
\item[(i)] There are \emph{no} non-zero natural numbers $\lambda, \mu, \nu$ such that $\lambda b = \mu c + \nu a$ or $\lambda c = \mu a + \nu b$, and $\beta = \min(\alpha, \beta, \gamma)$ (in other words: if any edge can be written as a rational combination of the others; it is not the shortest edge).
\item[(ii)] There \emph{are} non-zero natural numbers $\lambda, \mu, \nu$ such that $\lambda a = \mu b + \nu c$ and $\alpha = \min(\alpha, \beta, \gamma)$ (in other words: the shortest edge can be written as a rational combination of the other edges).
\end{itemize}
Note that class (i) includes all isosceles irrational triangles.
For class (i), we will show that there can only be the trivial $n^2$-reptiling, provided $T$ is oblique.
For class (ii), we will show that there can only be the trivial $n^2$-reptiling, provided $T$ is acute.
Thus the analysis of these two classes together covers at least all acute irrational triangles and all oblique isosceles irrational triangles.

The following observation will be useful in the analysis of both classes:

\begin{figure}
\centering \includegraphics[height=3cm]{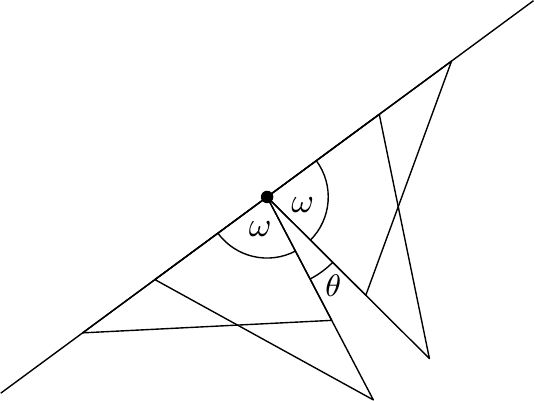}
\caption{A half vertex with two $\omega$-adjacencies, leaving an angle $\theta < \chi$.}
\label{fig:gammameet}
\end{figure}

\begin{observation}\label{obs:gammameet}
If $\chi \leq \psi < \omega$ are the angles of an oblique triangle (which is scalene, or isosceles with top angle $\omega > \pi/3$), and $d_\chi, d_\psi, d_\omega$ are non-negative integers satisfying $d_\chi\cdot\chi + d_\psi\cdot\psi + d_\omega\cdot\omega = \pi$, then we have $d_\omega \leq 1$.
\end{observation}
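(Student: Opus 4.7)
The plan is to use the angle-sum identity $\chi + \psi + \omega = \pi$ to eliminate $\pi$ from the given equation, reducing the problem to a pure balance condition in the angles, and then exploit the strict inequality $\psi < \omega$ together with obliqueness to rule out $d_\omega \geq 2$.

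First I would subtract $\chi + \psi + \omega = \pi$ from $d_\chi\chi + d_\psi\psi + d_\omega\omega = \pi$ to obtain
\[
  (d_\chi-1)\chi + (d_\psi-1)\psi + (d_\omega-1)\omega = 0.
\]
Assuming for contradiction that $d_\omega \geq 2$ and rearranging yields
\[
  (1-d_\chi)\chi + (1-d_\psi)\psi \;=\; (d_\omega-1)\omega \;\geq\; \omega.
\]
The left-hand side, viewed as a function of $(d_\chi,d_\psi) \in \mathbb{Z}_{\geq 0}^2$, attains its maximum $\chi+\psi$ only at $(0,0)$; any other choice of non-negative integers makes at least one of the two terms non-positive, so that the left-hand side is at most $\max(\chi,\psi) = \psi$. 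Since $\psi < \omega$, only $(d_\chi,d_\psi) = (0,0)$ survives the inequality, in which case the equation collapses to $d_\omega\omega = \pi$.

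It then remains to derive a contradiction from $d_\omega\omega = \pi$ with $d_\omega \geq 2$. From $\chi \leq \psi < \omega$ one reads off $\pi = \chi+\psi+\omega < 3\omega$, so $\omega > \pi/3$, which forces $d_\omega < 3$; hence $d_\omega = 2$ and $\omega = \pi/2$. This contradicts the hypothesis that the triangle is oblique.

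The main obstacle I expect is less computational than organisational: making sure the argument is uniform across the two stated cases (scalene, and isosceles with top angle $\omega$), so that both the bound $\omega > \pi/3$ and the strict inequality $\omega \neq \pi/2$ are available in each. Both facts follow at once from $\chi \leq \psi < \omega$ together with obliqueness, so the whole observation reduces to the short case analysis on $(d_\chi,d_\psi)$ sketched above, with no further calculation required.
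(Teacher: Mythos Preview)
Your proof is correct and rests on the same two ingredients as the paper's---the strict inequality $\psi < \omega$ and obliqueness---but the paper gets there in one stroke rather than via your subtraction and case split on $(d_\chi,d_\psi)$. The paper simply observes that if $d_\omega \geq 2$ then obliqueness forces $2\omega < \pi$, so at least one more angle must be added; but already $\chi + 2\omega > \chi + \psi + \omega = \pi$, a contradiction. Your algebraic reduction to $d_\omega\omega = \pi$ is a valid detour that reaches the same endpoint, just with more bookkeeping; the paper's single inequality $\chi + 2\omega > \pi$ subsumes your entire case analysis.
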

\begin{proof}
Suppose, for the sake of contradiction, $d_\omega \geq 2$. Then, since $T$ is not a right triangle, $2\omega$ must be strictly smaller than $\pi$, and further angles must be added to make $\pi$. Since $\chi + 2\omega > \chi + \psi + \omega = \pi$, this is not possible. Therefore, $d_\omega \leq 1$.
\end{proof}
Observation~\ref{obs:gammameet} and its application to a half vertex is illustrated in Figure~\ref{fig:gammameet}.

In the analysis of both classes (i) and (ii), we will assume, without loss of generality, that the edge $BC$ of $T$ is horizontal and constitutes the bottom edge of $T$, where $B$ lies to the right of $C$. Let $B_{\varepsilon}(p)$ be the ball with radius $\varepsilon$ centered at $p$. We say a point $p$ is \emph{strictly right-bounded} by an $e$-parallel if there exists some $\varepsilon > 0$ such that in the right half of $B_{\varepsilon}(p)$, the $e$-parallel through $p$ is contained in an edge of ${\cal T}$. We say $p$ is \emph{right-bounded} by an $e$-parallel, if it is strictly right-bounded or there exists some $\varepsilon > 0$ such that in the right half of $B_{\varepsilon}(p)$, the $e$-parallel through $p$ lies outside of $T$. Note that this right half is well-defined after fixing the orientation of our triangle $T$.

Additionally, we say that some tile is \emph{right-adjacent} to a line segment $pq$ if two corners of the tile lie on $pq$, while the third corner lies in the halfplane that is bounded on the left by the line that contains $pq$.

\paragraph{Class (i)} There are no non-zero natural numbers $\lambda, \mu, \nu$ such that $\lambda b = \mu c + \nu a$ or $\lambda c = \mu a + \nu b$, and $\beta = \min(\alpha, \beta, \gamma)$.

\begin{figure}
\centering
\includegraphics{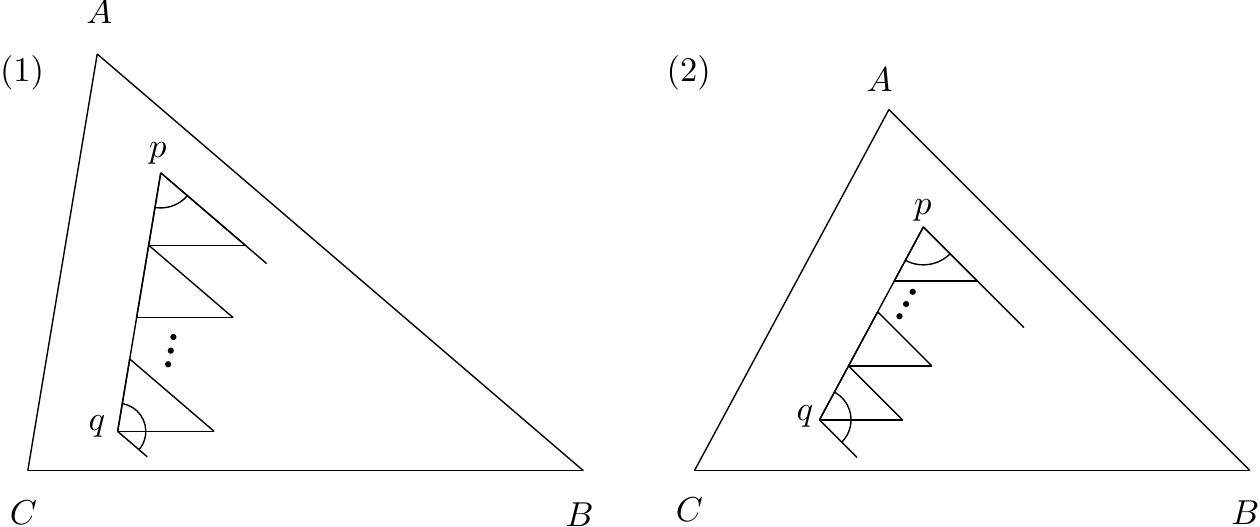}
\caption{Illustration for Lemma~\ref{lem:sawtooth}.}
\label{fig:newsawtooth}
\end{figure}

\begin{lemma}\label{lem:sawtooth}
\textbf{(Sawtooth lemma)} Let ${\cal T}$ be a reptiling of an oblique triangle $T$, where there are no natural numbers $\lambda, \mu, \nu$ such that $\lambda b = \mu c + \nu a$ or $\lambda c = \mu a + \nu b$, and $\beta = \min(\alpha, \beta, \gamma)$. Let $pq$ be a segment of a $CA$-parallel that has length $k \cdot b$ for some $k \in \mathbb{N}$ and is covered completely by edges of ${\cal T}$, where $p$ and $q$ are right-bounded by $AB$-parallels. Then the tiles of ${\cal T}$ that are right-adjacent to $pq$ are translates of scaled (not rotated or reflected) copies of $T$.
\end{lemma}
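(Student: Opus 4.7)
The plan is to prove the Sawtooth Lemma in two stages. In the first, I would show that every right-adjacent tile has its edge on $pq$ of length exactly $b$. Letting $N_a, N_b, N_c$ denote the numbers of right-adjacent tile edges on $pq$ of lengths $a, b, c$, summing lengths gives $N_a a + N_b b + N_c c = kb$, that is, $(k - N_b)\, b = N_a a + N_c c$. The class (i) hypothesis prohibits any natural-number relation $\lambda b = \mu a + \nu c$, and the irrationality of $T$ prohibits $b/a$ or $b/c$ from being a rational number other than one; together these force $N_b = k$ and $N_a = N_c = 0$.

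In the second stage, each right-adjacent tile (with $b$-edge on $pq$ and third vertex on the right) is either Option~A---a direct scaled translate of $T$, with $C$-corner at the lower end and $A$-corner at the upper end of its $b$-edge---or Option~B---the reflection of Option~A through the perpendicular bisector of its $b$-edge, with the $A$- and $C$-corners swapped. I would rule out Option~B by combining the right-boundedness at $p$ and $q$ with Observation~\ref{obs:gammameet}. At $q$, an Option~A rightmost tile has its $c$-length edge $A_T B_T$ emanating from $q$ exactly in the $AB$-direction, naturally providing the required $AB$-parallel; in Option~B that tile's outgoing edge lies at direction $180^\circ - \gamma$ clockwise from the $pq$-direction rather than the required $180^\circ - \alpha$, forcing the $AB$-parallel edge to come from another tile---which is impossible when $\alpha < \gamma$, since the $AB$-parallel would then fall in the angular interior of the Option~B tile itself. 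A symmetric argument rules out an Option~B leftmost tile at $p$ when $\alpha > 90^\circ$. For interior vertices $v_j$ between tiles $j$ and $j+1$, a mixed configuration (one tile of each Option) puts two corners of equal angle at $v_j$ on the right of $pq$---both $\alpha$ in the (A, B) mixing, both $\gamma$ in the (B, A) mixing---inside a half-plane angle sum of $\pi$; Observation~\ref{obs:gammameet} immediately forbids the configuration whenever the doubled angle coincides with the largest angle $\omega$ of $T$.

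The remaining and hardest case is a mixed configuration whose doubled angle is not $\omega$ (for instance, a (B, A) transition in a scalene triangle with $\omega = \alpha > \gamma$). My plan here is to exploit the geometry around $v_j$: both outgoing $a$-length edges from $v_j$ terminate on a common line $\ell'$ parallel to $pq$ at perpendicular distance $a\sin\gamma$, on which every third vertex of a right-adjacent tile lies (independently of whether the tile is Option~A or Option~B). The segment of $\ell'$ between the two outgoing-edge endpoints has length exactly $2a\cos\gamma$, and I expect to obtain a contradiction by combining the requirement that any portion of $\ell'$ covered by tile edges decomposes into pieces of lengths $a, b, c$ with the law-of-cosines identity $2ab\cos\gamma = a^2 + b^2 - c^2$ and the class (i) restriction on natural-number relations among $a, b, c$. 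Once all mixed configurations are ruled out, every right-adjacent tile shares the same Option, and the endpoint right-boundedness then selects Option~A throughout.
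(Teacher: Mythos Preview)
Your Stage~1 is correct and matches the paper's opening step. Stage~2 contains the right ingredients---Option~A versus Option~B, Observation~\ref{obs:gammameet}, and the endpoint right-boundedness---but you assemble them in an order that manufactures an unnecessary obstacle, and your plan for the ``hardest case'' does not close.

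Concretely, the $2a\cos\gamma$ argument does not work. The segment of $\ell'$ between the two $B$-vertices of $T_j$ and $T_{j+1}$ need not be covered by tile edges at all: other tiles may fill the angular gap at $v_j$ with only a corner at $v_j$, and their far sides need not lie on $\ell'$. Even if that segment were so covered, $2a\cos\gamma = (a^2+b^2-c^2)/b$ is not an integer combination of $a,b,c$, so the class~(i) hypothesis gives no leverage. (Your endpoint analysis also has slips: in Option~A the edge leaving the lower endpoint is the $a$-edge in the $BC$-direction, not a $c$-edge in the $AB$-direction; and the condition ``$\alpha>90^\circ$'' is never satisfied in the cases at hand.)

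The paper avoids the hardest case entirely by reversing your two steps. Observation~\ref{obs:gammameet} forbids only \emph{one} direction of transition along $pq$: writing the tiles $T_1,\dots,T_k$ from $p$ down to $q$, the transition from Option~A to Option~B puts two $\gamma$'s at the shared vertex and is forbidden when $\gamma>\alpha$, while the transition from Option~B to Option~A puts two $\alpha$'s there and is forbidden when $\alpha>\gamma$. One forbidden direction is already enough, provided you first pin down the correct endpoint. When $\alpha<\gamma$, the angle between $pq$ and the $AB$-parallel at $p$ is exactly $\alpha$, so $T_1$ must be $\alpha$-adjacent to $p$, i.e.\ Option~A; then the forbidden $A\to B$ transition forces every subsequent tile to be Option~A. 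When $\alpha>\gamma$, argue at $q$ instead: an Option~B tile there (with $\alpha$ at $q$) would leave a residual angle $\pi-2\alpha<\beta$ between it and the $AB$-parallel through $q$, which no tile angle can fill, so $T_k$ is Option~A; now the forbidden $B\to A$ transition propagates Option~A upward. In either case the ``hardest'' mixed configuration never needs to be ruled out on its own.
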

\begin{proof}
First note that $pq$ cannot be covered by any other combination of edges than $k$ edges of length $b$, of tiles $T_1, ..., T_k$ (from top to bottom). If $T$ is isosceles with $\alpha = \gamma$, the statement immediately follows. In any other case, all of these edges meet, at their endpoints, an $\alpha$ and a $\gamma$ angle of their respective tiles. We can distinguish two cases, as illustrated in Figure~\ref{fig:newsawtooth}: (1) $\alpha < \gamma$, and (2) $\alpha > \gamma$.

In case (1) (if $T$ is isosceles, this means $\alpha = \beta < \gamma$) the angle $\alpha$ between $pq$ and the $AB$-parallel through $p$ forces $T_1$ to be $\alpha$-adjacent to $p$, with the larger $\gamma$-angle at the bottom. By Observation~\ref{obs:gammameet}, two successive tiles $T_i$ and $T_{i+1}$ cannot be $\gamma$-adjacent to the same vertex, so, by induction, each of the tiles $T_i$ has the $\alpha$ angle at the top and the $\gamma$-angle at the bottom. This proves the lemma for this case.

In case (2) (if $T$ is isosceles, this means $\gamma = \beta < \alpha$) we claim that $T_k$ is $\gamma$-adjacent to $q$. If $q$ is not strictly right-bounded, this follows immediately, since $q$ then lies on $BC$ and we have a gap of size $\gamma$. Otherwise, that is, if $q$ is strictly right-bounded, suppose, for the sake of contradiction, that $T_k$ was $\alpha$-adjacent to $q$. The angle $\pi-\alpha$ between $pq$ and the $AB$-parallel through $q$ now yields a gap of $\pi-2\alpha < \pi-\alpha-\gamma = \beta$, which no angle can fill. Hence, $T_k$ is $\gamma$-adjacent to $q$. By Observation~\ref{obs:gammameet}, the $\alpha$-angles of successive tiles $T_i$ and $T_{i+1}$ cannot meet at the same point, so inductively, each of the tiles $T_i$ has the $\gamma$ angle at the bottom and the $\alpha$-angle at the top. This proves the lemma for this case.
\end{proof}

For a point $p$, let its \emph{dominated region} $D(p)$ be the region of $T$ that is both to the left of the $AB$-parallel and to the left of the $CA$-parallel through $p$. For any given set of points $p_1,...,p_m$ in order of decreasing distance from the supporting line of $BC$, let the dominated region $D(p_1,...,p_m)$ be $\bigcup_{i=1}^m D(p_i)$. Let $R$ be the largest dominated region that is tiled exactly as in the trivial tiling, and let $p_1,...,p_m$ be the smallest set of points such that $p_1$ lies on $AB$, $p_m$ lies on $BC$, and $D(p_1,...,p_m) = R$, see Figure~\ref{fig:stairs}. Let $q_i$, for $1 \leq i < m$, be the intersection of the $CA$-parallel through $p_i$ and the $AB$-parallel through $p_{i+1}$. From Lemma~\ref{lem:sawtooth}, with $p = A$ and $q = C$, we know that $R$ contains at least one tile (the tile in the corner at $C$), and therefore $p_m$ does not lie at $C$.

\begin{figure}
\centering
\includegraphics[scale=0.9]{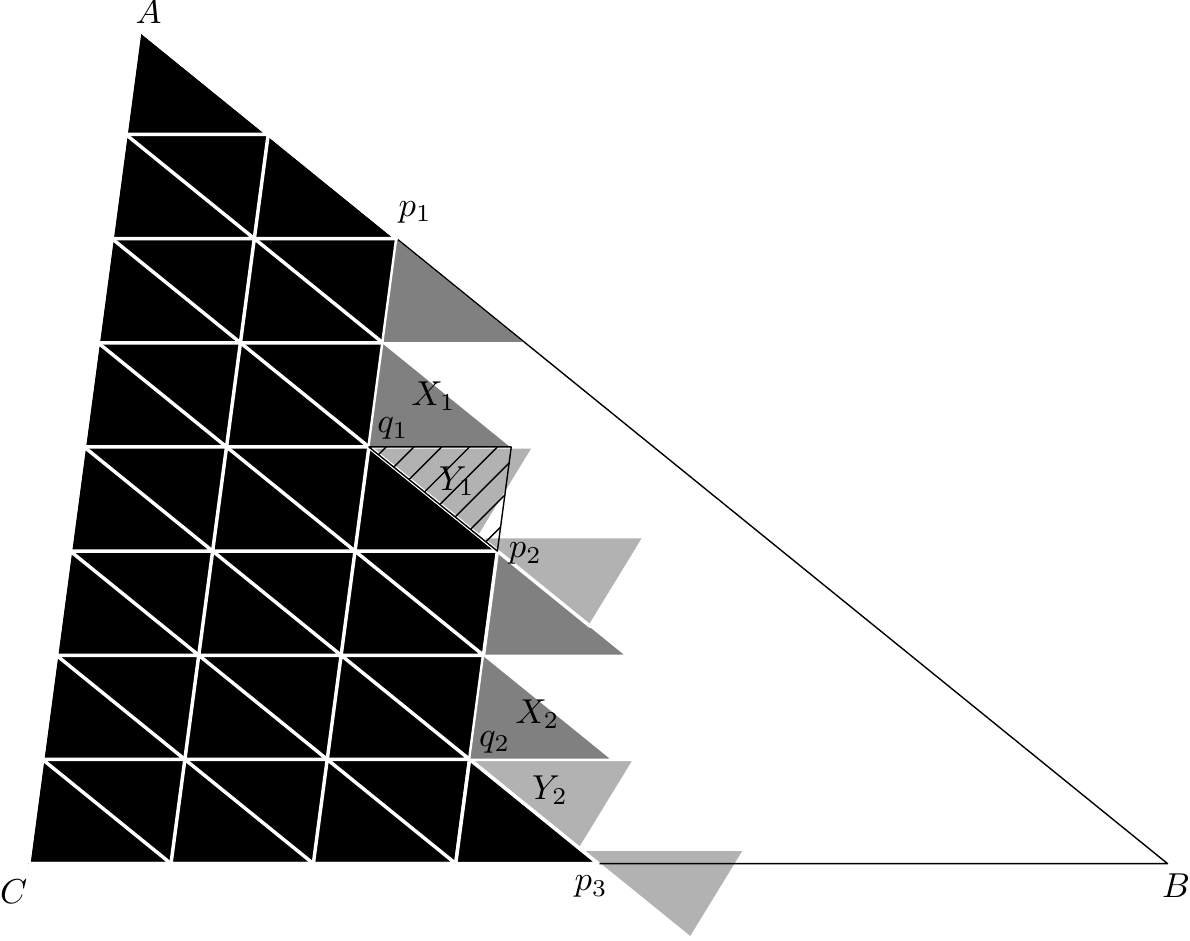}
\caption{Illustration of the analysis of a hypothetical non-trivial tiling under the conditions of class (i). The black region is $R$. The dark gray tiles result from applying Lemma~\ref{lem:sawtooth} (the sawtooth lemma). At $q_1$, the hashed tile cannot be placed because it would violate the maximality of $R$; hence we have to place the gray tile $Y_1$, which results in ``overshooting'' at $p_2$. Applying induction, we find that the configuration of tiles around $q_{m-1}$ induces overshooting at $p_m$, contradicting the very existence of the tiling.}
\label{fig:stairs}
\end{figure}

Now suppose that $p_1 \neq p_m$. We will prove by induction on increasing $i$ that $p_i$ is strictly right-bounded by an $AB$-parallel, which yields a contradiction.
The base case $i = 1$ is established by definition, since $p_1$ lies on $AB$, but $p_1$ is not the master vertex $B$. %
Now consider point $p_i$ with $1<i<m$ and suppose $p_{i-1}$ is strictly right-bounded by an $AB$-parallel. By construction, the length of $p_{i-1}q_{i-1}$ is a multiple of $b$ and immediately below $q_{i-1}$, the tiling ${\cal T}$ corresponds to the trivial tiling. Hence, $q_{i-1}$ is also strictly right-bounded by an $AB$-parallel, and by Lemma~\ref{lem:sawtooth}, a saw-tooth pattern must be placed along $p_{i-1}q_{i-1}$. In particular, $q_{i-1}$ must be $\gamma$-adjacent to a tile $X_{i-1}$ that has exactly the same orientation as $T$ and is also part of the trivial tiling. This only leaves room around $q_{i-1}$ for one more tile $Y_{i-1}$, which must be $\beta$-adjacent to $q_{i-1}$, see Figure~\ref{fig:stairs}. If $Y_{i-1}$ shared an edge of length $a$ with $X_{i-1}$, then $Y_{i-1}$ would also be part of the trivial tiling and $R \cup X_{i-1} \cup Y_{i-1}$ would be a dominated region that is tiled exactly as in the trivial tiling---contradicting the definition of $R$ as the largest such region. Therefore, an edge of length $a$ of $Y_{i-1}$, where $a \neq c$, must lie on the $AB$-parallel through $q_{i-1}$ and $p_i$. (Note that this implies that the triangle cannot be isosceles with $a = c$.) The length of $q_{i-1}p_i$ is a multiple of $c$, which cannot be written as a linear combination of tile edge lengths that includes at least one times $a$. Therefore, the tile edges that are right-adjacent to $q_{i-1}p_i$ cannot cover $q_{i-1}p_i$ exactly and there must be a tile with an edge $e$ that is parallel to $AB$, where the interior of $e$ contains $p_i$. It follows that $p_i$ is also strictly right-bounded by an $AB$-parallel.
It remains to treat the case of $i = m$, but for that, we first need to establish that $p_m \neq q_{m-1}$. Indeed, we just established that $p_{m-1}$ is strictly right-bounded, and from Lemma~\ref{lem:sawtooth}, we get that $q_{m-1}$ is $\gamma$-adjacent to some tile which could be added to $R$ if $p_m = q_{m-1}$. This would contradict the maximality of $R$, so we must have $p_m \neq q_{m-1}$. With that in mind, we can extend our inductive step to the case of $i = m$ and thus get that $p_m$ is strictly right-bounded. But this cannot be true, since $p_m$ lies on the bottom edge $BC$ of $T$. Hence our assumption $p_1 \neq p_m$ must be false and we must have $p_1 = p_m = B$, that is, $R$ includes all of $D(B) = T$: the complete tiling must be trivial.

\paragraph{Class (ii)} There are non-zero natural numbers $\lambda, \mu, \nu$ such that $\lambda a = \mu b + \nu c$ and $\alpha = \min(\alpha, \beta, \gamma)$. The analysis of this class is a little bit more complicated, since we do not have the convenient property anymore that $X_{i-1}$ leaves room around $q_{i-1}$ for only one more tile $Y_{i-1}$. Without loss of generality, we may assume $\beta < \gamma$, and therefore, $\alpha < \beta < \gamma$.

\begin{observation}\label{obs:stillcasei}
The proof for class (i) still goes through verbatim for class (ii) if $\alpha$ does not divide $\beta$, since then, the angle $\beta$ that is left after inserting $X_{i-1}$ at $q_{i-1}$ cannot be filled with smaller angles and can only be filled by a single tile $Y_{i-1}$, as before.
\end{observation}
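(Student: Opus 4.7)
The plan is to pinpoint the single local step of the class (i) argument whose transfer to class (ii) is not immediate, verify it under the added hypothesis $\alpha \nmid \beta$, and then check that every remaining step of the class (i) proof is insensitive to swapping the excluded relations.

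The sensitive step is the claim that, after the sawtooth lemma forces $X_{i-1}$ with its $\gamma$-corner at $q_{i-1}$, the remaining wedge of size $\beta$ around $q_{i-1}$ is uniquely filled by a single $\beta$-adjacent tile $Y_{i-1}$. In class (i) this was automatic because $\beta$ was the minimum angle; in class (ii) we have $\alpha < \beta < \gamma$, so a priori several small $\alpha$-corners could try to tile the $\beta$-wedge. To rule this out I would classify the nonnegative integer solutions of
\[
d_\alpha\,\alpha + d_\beta\,\beta + d_\gamma\,\gamma = \beta.
\]
Since $\gamma > \beta$, necessarily $d_\gamma = 0$; since $d_\beta\,\beta \le \beta$, $d_\beta \in \{0,1\}$. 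If $d_\beta = 1$, then $d_\alpha = 0$ and the wedge is closed by the single tile $Y_{i-1}$, exactly as in class (i). If $d_\beta = 0$, then $d_\alpha\,\alpha = \beta$, i.e.\ $\alpha \mid \beta$, which the hypothesis excludes. Hence the angular uniqueness used by the class (i) proof is restored.

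It remains to note that the rest of the class (i) argument — the arithmetic obstruction that a multiple of $c$ cannot be written as a nonnegative integer combination of $a,b,c$ that uses at least one $a$, the resulting strict right-boundedness of $p_i$, the inductive propagation down the staircase, and the final contradiction at $p_m \in BC$ — never invokes the prohibition of $\lambda a = \mu b + \nu c$ that is lifted in class (ii). Any relation of the form $\lambda c = \mu a + \nu b$ with positive coefficients would have sign pattern $(+,+,-)$ on $(a,b,c)$, which is linearly independent of the class (ii) relation $\lambda a - \mu b - \nu c = 0$ of sign pattern $(+,-,-)$; by the uniqueness up to scaling of the rational relation among the edge lengths of an irrational triangle, no such second relation can exist. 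The main (mild) subtlety is this sign-pattern bookkeeping; the substantive content of the observation is the one-line angle calculation above.
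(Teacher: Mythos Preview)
Your proposal is correct and follows the same idea as the paper's own (one-line) justification: the only place where the class~(i) argument could break in class~(ii) is the uniqueness of the tile $Y_{i-1}$ filling the residual angle $\beta$ at $q_{i-1}$, and your case analysis of $d_\alpha\alpha + d_\beta\beta + d_\gamma\gamma = \beta$ under $\alpha\nmid\beta$ restores exactly that. Your additional sign-pattern check that the length relations forbidden in class~(i) remain forbidden in class~(ii) is a useful elaboration the paper leaves implicit; for completeness you might also note that the Sawtooth Lemma's case~(1) proof (the only case relevant when $\alpha<\gamma$) never actually uses the hypothesis $\beta=\min(\alpha,\beta,\gamma)$, so it too carries over verbatim.
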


It remains to analyse the case in which $\alpha$ divides $\beta$. We will use the following observation:
\begin{observation}\label{obs:case3general}
If $\alpha < \beta < \gamma$ are the angles of a scalene oblique triangle, where $\beta/\alpha \in \mathbb{N}$ and $\gamma/\alpha \notin \mathbb{N}$, and $d_\alpha, d_\beta, d_\gamma$ are non-negative integers satisfying $d_\alpha\cdot\alpha + d_\beta\cdot\beta + d_\gamma\cdot\gamma = \pi$, then we have $d_\gamma = 1$ and $d_\beta \leq 1$.
\end{observation}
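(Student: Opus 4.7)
The plan is to determine $d_\gamma$ first and then use the resulting reduction to bound $d_\beta$. For the upper bound $d_\gamma \leq 1$, I would directly invoke Observation~\ref{obs:gammameet} with $(\chi,\psi,\omega) = (\alpha,\beta,\gamma)$, which applies since the triangle is scalene and oblique with $\alpha < \beta < \gamma$.

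The critical step is ruling out $d_\gamma = 0$, and this is precisely where the hypothesis $\gamma/\alpha \notin \mathbb{N}$ enters. Writing $\beta = k\alpha$ with $k \in \mathbb{N}$ (necessarily $k \geq 2$ since $\beta > \alpha$), the hypothesized identity $d_\alpha\alpha + d_\beta\beta = \pi$ reduces to $(d_\alpha + k\,d_\beta)\,\alpha = \pi$. Combined with the angle-sum identity $\pi = \alpha + \beta + \gamma = (k+1)\alpha + \gamma$, this yields $\gamma = (d_\alpha + k\,d_\beta - k - 1)\,\alpha$, which forces $\gamma/\alpha$ into $\mathbb{Z}$ and, since $\gamma > 0$, into $\mathbb{N}$, contradicting the hypothesis. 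Hence $d_\gamma = 1$.

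Substituting $d_\gamma = 1$ and cancelling $\gamma$ via $\alpha + \beta + \gamma = \pi$ reduces the equation to $d_\alpha\alpha + d_\beta\beta = \alpha + \beta$. Since $\beta > \alpha$, if $d_\beta \geq 2$ the left-hand side would already exceed $2\beta > \alpha + \beta$, a contradiction; therefore $d_\beta \leq 1$, completing the argument. I do not anticipate any real obstacle here, as the whole proof is elementary arithmetic; the only point requiring care is to deploy the hypothesis $\gamma/\alpha \notin \mathbb{N}$ at exactly the moment when $d_\gamma = 0$ is being excluded, as the rest of the reasoning does not use this hypothesis at all.
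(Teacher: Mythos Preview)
Your proof is correct and follows essentially the same approach as the paper: both use Observation~\ref{obs:gammameet} for the upper bound $d_\gamma \leq 1$, a divisibility-by-$\alpha$ argument (which the paper phrases as working modulo~$\alpha$) to exclude $d_\gamma = 0$, and the inequality $2\beta + \gamma > \pi$ to bound $d_\beta$. The only difference is presentational---you spell out the modular reasoning explicitly rather than using congruence notation.
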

\begin{proof}
We have $\pi = \alpha + \beta + \gamma = \gamma \neq 0 \pmod\alpha$, while $d_\alpha\cdot\alpha + d_\beta\cdot\beta = 0 \pmod\alpha$, so we must have $d_\gamma \geq 1$, and therefore, by Observation~\ref{obs:gammameet}, $d_\gamma = 1$. It follows that $d_\beta \leq 1$, otherwise we would have $\pi = d_\alpha\cdot\alpha + d_\beta\cdot\beta + d_\gamma \geq 0 + 2\beta + \gamma > \alpha + \beta + \gamma = \pi$.
\end{proof}

In particular, we can make the following observation about acute triangles (in fact, this is the only place in this section where we use the fact that $T$ is not obtuse):
\begin{observation}\label{obs:case3}
If $\alpha < \beta < \gamma$ are the angles of a scalene acute triangle, where $\beta/\alpha \in \mathbb{N}$, and $d_\alpha, d_\beta, d_\gamma$ are non-negative integers satisfying $d_\alpha\cdot\alpha + d_\beta\cdot\beta + d_\gamma\cdot\gamma = \pi$, then we have $d_\gamma = 1$ and $d_\beta \leq 1$.
\end{observation}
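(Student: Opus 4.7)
The plan is to reduce Observation~\ref{obs:case3} to the already-proven Observation~\ref{obs:case3general}. The two statements differ only in their hypotheses: Observation~\ref{obs:case3general} requires the extra condition $\gamma/\alpha \notin \mathbb{N}$, while Observation~\ref{obs:case3} replaces this with the assumption that the triangle is acute. So it suffices to show that, under the hypotheses of Observation~\ref{obs:case3}, one automatically has $\gamma/\alpha \notin \mathbb{N}$, at which point Observation~\ref{obs:case3general} applies verbatim (noting that an acute triangle is in particular oblique).

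To establish $\gamma/\alpha \notin \mathbb{N}$, I would argue by contradiction. Suppose $\beta = k\alpha$ and $\gamma = m\alpha$ for integers $k,m$. From $\alpha < \beta < \gamma$ we get $k \geq 2$ and $m \geq k+1 \geq 3$. The angle sum identity $\alpha + \beta + \gamma = \pi$ then yields $(1 + k + m)\alpha = \pi$, so $\alpha = \pi/(1+k+m)$ and $\gamma = m\pi/(1+k+m)$.

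The acuteness of $T$ forces $\gamma < \pi/2$, which rearranges to $m < 1 + k$, i.e.\ $m \leq k$. This directly contradicts $m \geq k+1$. Hence no integer $m$ with $\gamma = m\alpha$ can exist, so $\gamma/\alpha \notin \mathbb{N}$, and Observation~\ref{obs:case3general} delivers $d_\gamma = 1$ and $d_\beta \leq 1$. There is no real obstacle here; the only subtlety worth flagging in the write-up is that this single inequality $\gamma < \pi/2$ is precisely the place where acuteness enters, matching the parenthetical remark preceding the observation that this is the only use of the acute assumption in the section.
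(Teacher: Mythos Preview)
Your proof is correct and follows essentially the same approach as the paper: both reduce to Observation~\ref{obs:case3general} by showing $\gamma/\alpha \notin \mathbb{N}$, using precisely the two ingredients $\beta < \gamma$ and $\gamma < \pi/2$. The paper phrases the computation slightly differently, sandwiching $\pi$ strictly between $(2k+1)\alpha$ and $(2k+2)\alpha$ to conclude that $\pi$ (and hence $\gamma$) is not a multiple of $\alpha$, but this is the same argument as yours in disguise.
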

\begin{proof}
Let $\beta$ be $k\alpha$. We have $\pi = \alpha + \beta + \gamma > \alpha + 2\beta = (2k+1)\alpha$, but also $\pi = 2(\alpha + \beta + \gamma) - \pi < 2(\alpha + \beta + \pi/2) - \pi = 2(\alpha + \beta) = (2k+2)\alpha$, so $\pi$ is not a multiple of $\alpha$ and thus neither is $\gamma$. Hence we have $d_\gamma = 1$ and $d_\beta \leq 1$ by Observation~\ref{obs:case3general}.
\end{proof}

\begin{figure}
\centering
\includegraphics[scale=0.9]{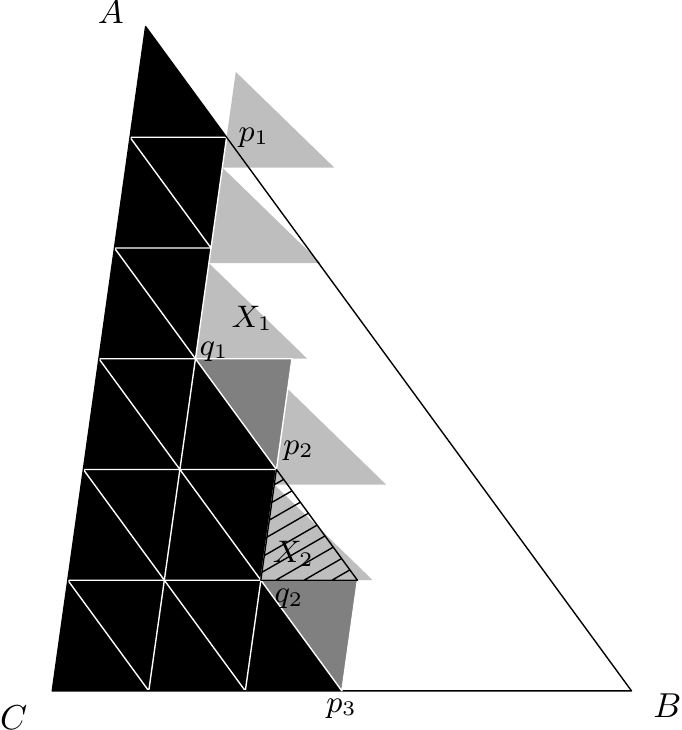}
\caption{Illustration of the analysis of a hypothetical non-trivial tiling under the conditions of class (ii). The black region is $R$. The dark gray tiles result from the fact that the points $p_i$ are strictly right-bounded by a $CA$-parallel, so these tiles have their $\alpha$-angle at the bottom and their sides of length $c$ on $p_i q_{i-1}$. Going bottom-up, we find that the hashed option of placing tile $X_2$ is not possible because it would violate the maximality of $R$; hence we have to place the gray tile, which results in ``overshooting'' at $p_2$. Applying induction and working upwards, we find that $p_1$ is also strictly right-bounded by a $CA$-parallel, contradicting the very existence of the tiling.}
\label{fig:stairs2}
\end{figure}

Now let $R = D(p_1,...,p_m)$ be defined as before, and suppose, for the sake of contradiction, that $m \neq 1$, and therefore, $p_m \neq B$. For an illustration, refer to Figure~\ref{fig:stairs2}. We will prove by induction on \emph{decreasing} $i$ that the points $p_i$ are strictly right-bounded by $CA$-parallels.

We will first establish either the case of $i = m$ or the case of $i = m-1$ as a base case.

If $p_m = q_{m-1}$, we choose $i = m-1$ as the base case of our induction. For the sake of contradiction, suppose $p_m p_{m-1}$ is covered exactly by right-adjacent tiles. Since the length of $p_m p_{m-1}$ is a multiple of $b$, it can only be covered by edges of length $b$, whose respective tiles shall be called $T_1, \ldots, T_h$ with $T_h$ being the tile adjacent to $p_m$. But at $p_m$, there is only a gap of angle $\gamma$ left and since $\alpha$ does not divide $\gamma$, $T_h$ must be $\gamma$-adjacent to $p_m$ and thus could be added to $R$, contradicting the maximality of $R$. Therefore, we must conclude that $p_m p_{m-1}$ is not covered exactly by right-adjacent tiles, and hence $p_{m-1}$ is strictly right-bounded by a $CA$-parallel. This establishes the base case $i = m-1$.

Otherwise, if $p_m \neq q_{m-1}$ (see Figure~\ref{fig:stairs2}), we choose $i = m$ as the base case of the induction. Vertex $p_m$ lies on $BC$. The length of the line segment $q_{m-1} p_{m}$ is a multiple of $c$, which can only be covered by edges of length $c$, at the ends of which their respective tiles $T_1,...,T_h$ (from top to bottom) have angles $\alpha$ and $\beta$. Since $p_m$ has a $\beta$-adjacency from the trivial tiling on the left, by Observation~\ref{obs:case3} it cannot have another $\beta$-adjacency, and therefore it must have an $\alpha$-adjacency from $T_h$. It follows that $p_m$ is bounded by a $CA$-parallel.

Having established the base case, we will now describe the induction step. Suppose $p_{i+1}$ is bounded by a $CA$-parallel. The length of the line segment $q_i p_{i+1}$ is a multiple of $c$, which can only be covered by edges of length $c$, at the ends of which their respective tiles $T_1,...,T_h$ (from top to bottom) have angles $\alpha$ and $\beta$. Between $q_i p_{i+1}$ and the $CA$-parallel through $p_{i+1}$ there is only an angle $\alpha$, and Observation~\ref{obs:case3} says that the $\beta$-angles of two tiles from $T_1,...,T_h$ cannot meet at the same point on $q_i p_{i+1}$. Therefore all tiles $T_1,...,T_h$ must have their $\alpha$-angle at the bottom and their $\beta$-angle on the left. It follows that $q_i$ is bounded by a $BC$-parallel. This leaves a gap of $\gamma$ at $q_i$. Since $\alpha$ does not divide $\gamma$, this can only be filled by a tile $X_i$ with angle $\gamma$ at $q_i$. Now the side of $X_i$ of length $b$ cannot lie on $p_i q_i$, because then $R \cup X_i \cup T_1$ would be a dominated region that is tiled exactly as in the trivial tiling---contradicting the definition of $R$ as the largest such region. Therefore, the edge of $X_i$ that lies on $p_i q_i$ does not have length $b$. The length of $p_i q_i$ is a multiple of $b$, which cannot be written as a linear combination of tile edge lengths that includes at least one times $a$ or $c$. Therefore, the tile edges that cover $p_i q_i$ from the right cannot cover $p_i q_i$ exactly and must ``overshoot'' at $p_i$, so that $p_i$ lies on the interior of a tile edge that is parallel to $CA$. It follows that $p_i$ is also strictly right-bounded by a $CA$-parallel.

Thus, by induction, we find that $p_1$ is strictly right-bounded by a $CA$-parallel. But this cannot be, since $p_1$ lies on $AB$. Hence our assumption $p_1 \neq p_m$ must be false and we must have $p_1 = p_m = B$, that is, $R$ includes all of $D(B) = T$: the complete tiling must be trivial.

This concludes the proof of Theorem~\ref{thm:onlytrivial}, and thus, of Theorem~\ref{main1}: an acute triangle $T$ admits a non-trivial reptiling if and only if $T$ is rational.

\section{Intermezzo: all non-trivial reptilings of oblique triangles have hanging vertices}\label{sec:intermezzo}
From the previous section we learn that we need to focus on triangles of which the length ratio of two sides is a rational number other than one. In the present section we will learn more about the nature of the reptilings we need.
The reader may feel free to skip this section, as the results are ultimately without consequence for our final results---but they can be of interest to those who would like to solve the open problems stated in Section~\ref{sec:furtherresearch} and want to be able to quickly recognize infeasible solutions.

In this section we will prove the following theorem.

\begin{theorem}\label{thm:withouthalfverticesonlygrid}
Any non-trivial reptiling of an oblique triangle must have half vertices in the interior of the triangle.
\end{theorem}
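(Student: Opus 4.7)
My plan is to proceed by contradiction: suppose $\mathcal T$ is a non-trivial $n^2$-reptiling of an oblique triangle $T$ with no hanging vertices, i.e., every half vertex of $\mathcal T$ lies on the boundary of $T$. The goal is to show that under this hypothesis $\mathcal T$ must in fact coincide with the trivial $n^2$-reptiling, contradicting non-triviality. I would adapt the maximal-dominated-region framework from the proof of Theorem~\ref{thm:onlytrivial}: orient $T$ so that $BC$ is horizontal with $B$ to the right of $C$, and let $R = D(p_1,\dots,p_m)$ be the largest dominated region on which $\mathcal T$ agrees with the trivial tiling of $T$.

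First I would show $R$ is non-empty by analysing the tile or fan of tiles at vertex $C$. The constraint that the $\gamma$-angle at $C$ decomposes into tile-angles from $\{\alpha,\beta,\gamma\}$, combined with the no-hanging-vertex hypothesis (which forces each tile-edge meeting $CA$ or $BC$ to be a complete edge of length $a$, $b$, or $c$), should pin the configuration at $C$ to the trivial one. Then, assuming $R\ne T$, I would induct along the staircase boundary of $R$ as in Section~\ref{sec:irrationalistrivial}. The key replacement for the irrationality-based sawtooth lemma is the following edge-to-edge observation: because $\mathcal T$ has no hanging vertices in its interior, any tile $X$ placed just outside $R$ must share complete edges with its trivial neighbours inside $R$, and hence its placement is determined up to a single reflection across a shared edge. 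If $X$ is in the trivial orientation, $R$ can be enlarged, contradicting its maximality; if $X$ is reflected, I would propagate the reflection outward along further edge-to-edge incidences, tracing out a connected \emph{island} of reflected tiles inside $T$.

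This island must eventually meet either the trivial region $R$ or the outer boundary $\partial T$, and along the resulting interface the reflected and non-reflected tiles would partition the same line segments using different combinations of edges of length $a$, $b$, and $c$. As in the rhombus-flip construction of Section~\ref{sec:rational}, these two partitions cannot coincide, because the ratios among $a$, $b$, $c$ force a mismatch at some intermediate endpoint; such a mismatch produces a half vertex in the interior of $T$, i.e., a hanging vertex, contradicting the hypothesis. The main obstacle is to make this partition-mismatch argument work uniformly across rational and obtuse triangles, where richer linear combinations of $a,b,c$ and of $\alpha,\beta,\gamma$ are available and the fan configurations at the obtuse corner can be more intricate; the delicate fact to exploit is that any edge-to-edge reflection of a tile swaps edge-length roles along its boundary in a way that is ultimately incompatible with the surrounding trivial partition, so no reflected island can be inserted without creating a hanging vertex somewhere on its boundary.
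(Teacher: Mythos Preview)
Your proposal is a sketch with a real gap, and it follows a different route from the paper's proof.

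\textbf{The gap.} The heart of your plan is the ``reflected island'' argument: once a single tile $X$ just outside $R$ is placed in the mirrored orientation, you want to propagate the mirroring along edge-to-edge incidences and eventually produce a hanging vertex on the island's boundary. But you have not actually carried this out, and you yourself flag it as ``the main obstacle.'' The difficulty is genuine. For rational triangles (the only interesting case, since irrational oblique isosceles and irrational acute triangles admit only trivial reptilings by Theorem~\ref{thm:onlytrivial}) there \emph{are} integer relations among $a,b,c$, so a segment on the island boundary can in principle be partitioned from both sides in more than one way with matching endpoints. You need a structural reason why \emph{some} interior vertex on the interface must fail to match, and your outline does not supply one. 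Your base case is also incomplete: for an obtuse $T$ the corner $C$ can host a fan of several tiles, and the no-hanging-vertex hypothesis by itself does not obviously forbid this; you would need an angle argument you have not given. Finally, the isosceles case undermines your ``placement determined up to a single reflection'' claim, since two edges have equal length.

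\textbf{How the paper does it.} The paper abandons the dominated-region framework entirely for this theorem. Instead it uses a global counting argument: the edge-to-edge hypothesis (no interior half vertices) forces a parity constraint at every full vertex (Lemma~\ref{lem:parity}), and then Euler's formula (Lemma~\ref{lem:euler}) combined with pigeonhole pins down the \emph{exact} degree profile at every vertex---each full vertex has $\deg_\alpha=\deg_\beta=\deg_\gamma=2$, each boundary half vertex has all degrees $1$, and the three master vertices together carry one angle of each type (Lemma~\ref{lem:allvertices}, with an analogue for the isosceles case). Once every vertex is this rigid, a short local induction along the side $CA$ shows the whole strip of tiles touching $CA$ is trivial, and stripping that strip off leaves a smaller instance. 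This avoids precisely the propagation difficulties your plan runs into: rather than chasing a reflected region through the tiling, the paper first proves that the combinatorics at \emph{every} vertex already match the trivial tiling, and then the geometry follows almost for free.
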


Without loss of generality, assume $\alpha \leq \beta \leq \gamma$. We start the proof of Theorem~\ref{thm:withouthalfverticesonlygrid} with the following lemma.

\begin{lemma}\label{lem:parity}
For any full vertex $v$ in a reptiling without interior half vertices of a scalene triangle, we have $\deg_{\alpha}(v) = \deg_{\beta}(v) = \deg_{\gamma}(v) \pmod 2$.
\end{lemma}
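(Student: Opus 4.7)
The plan is to exploit two consequences of the hypotheses. First, since $v$ is a full vertex, $v$ is neither one of the master corners $A,B,C$ nor on the interior of a side of $T$ (either possibility would put $v$ into the master or half class), so $v$ lies in the interior of $T$ and the tile corners meeting at $v$ have angles summing to exactly $2\pi$. Second, the assumption that $\mathcal T$ has no interior half vertices means that around $v$ the tiling is edge-to-edge: every edge of $G_{\mathcal T}$ leaving $v$ is a full side of each of the two tiles sharing it. Hence the edges incident to $v$ can be listed in cyclic order, with exactly one tile corner sitting between any two consecutive edges of the list.

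I would then label each edge leaving $v$ by its length, which belongs to $\{a,b,c\}$. The key observation is that because $T$ is scalene, its three sides have three distinct lengths, so the two sides of any one tile that meet at $v$ must carry two different labels; moreover, the corner angle is then uniquely determined by the unordered pair of labels, via $\{b,c\}\mapsto\alpha$, $\{a,c\}\mapsto\beta$, and $\{a,b\}\mapsto\gamma$. Consequently, consecutive labels in the cyclic sequence around $v$ always differ, and the number of tiles $\alpha$-adjacent to $v$ equals the number of $\{b,c\}$-transitions in this sequence, with analogous statements for $\beta$ and $\gamma$.

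Counting transitions is now straightforward. Let $n_a,n_b,n_c$ denote the numbers of edges of the respective lengths at $v$. Each of the $n_a$ occurrences of an $a$-labeled edge is flanked by two neighbors labeled $b$ or $c$, so the number of $\{a,b\}$- plus $\{a,c\}$-transitions equals $2n_a$, and doing the analogous count at $b$-edges and $c$-edges yields
\[
\deg_\beta(v)+\deg_\gamma(v)=2n_a,\qquad \deg_\alpha(v)+\deg_\gamma(v)=2n_b,\qquad \deg_\alpha(v)+\deg_\beta(v)=2n_c.
\]
Reading these identities modulo $2$ gives $\deg_\alpha(v)\equiv\deg_\beta(v)\equiv\deg_\gamma(v)\pmod 2$, which is the desired conclusion.

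I do not anticipate a real obstacle in this argument; the step I would verify most carefully is the first one, namely that ruling out hanging vertices really does force the star of $v$ to be a well-defined cyclic sequence of full tile sides, as this is what converts the rather weak ``full vertex with no adjacent hanging vertex'' condition into the rigid combinatorial structure on which the transition count relies.
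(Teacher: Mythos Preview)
Your proof is correct and rests on the same foundation as the paper's: once interior half vertices are excluded, each edge of $G_{\mathcal T}$ leaving the full vertex $v$ is a complete side of both incident tiles, so the edges around $v$ form a well-defined cyclic word in $\{a,b,c\}$ with no two consecutive letters equal, and the angle of each tile at $v$ is determined by the unordered pair of flanking letters. Where the two arguments diverge is in how they finish. The paper fixes one letter $d$ and enumerates the possible sub-words between consecutive occurrences of $d$, checking that within each such block the three angle-counts share a common parity. Your double-counting argument (each $a$-edge is flanked by two non-$a$ edges, so $\deg_\beta(v)+\deg_\gamma(v)=2n_a$, and symmetrically) extracts the parity conclusion directly and avoids that case analysis altogether; it is a genuinely cleaner finish, at the cost of giving slightly less structural information about the local picture around $v$. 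The point you flag for careful verification---that the no-hanging-vertex hypothesis really does force the two tile sides along each edge at $v$ to have equal length---is indeed the crux that makes the labeling well-defined, and it goes through because if the two sides had different lengths, the shorter one would end at a vertex lying strictly between the interior point $v$ and a point of $\overline{T}$, hence in the interior of $T$, creating a forbidden hanging vertex.
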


\begin{figure}
\centering
\includegraphics{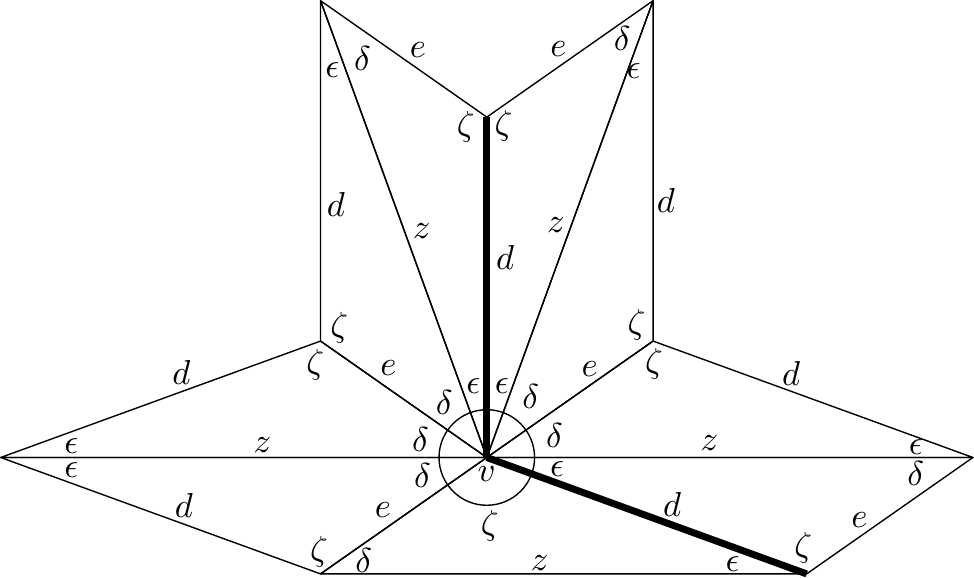}
\caption{Example of a possible sequence of edges and angles incident to a full vertex $v$. Starting from the vertical edge of length $d$, we first see a sequence of the first type: $d, \epsilon, z$, followed by one occurrence of the sequence $\delta, e, \delta, z$, and finally $\epsilon, d$. We then see a sequence of the second type, in which $(\epsilon,e)$ and $(\zeta,z)$ have switched roles: first $d, \zeta, e$, followed by one occurrence of the sequence $\delta, z, \delta, e$, and finally $\delta, z, \epsilon, d$.}
\label{fig:edgeanglecycle}
\end{figure}

\begin{proof}
We consider triangles to have angles $\delta$, $\epsilon$, and $\zeta$, and the lengths of the edges opposite of these angles shall be denoted by $d$, $e$, and $z$, respectively. Now consider the sequence of edges and angles incident and adjacent to a full vertex $v$, in clockwise order around $v$, starting from an arbitrary edge incident to $v$ and ending with that same edge (see Figure~\ref{fig:edgeanglecycle}). We describe such a sequence by the lengths of the edges and the sizes of the angles. Without loss of generality, let the sequence start with $d, \epsilon, z$. It is now easy to verify that the initial part of the sequence up to the next occurrence of $d$ can be completed only in the following ways:\begin{itemize}
\item $d, \epsilon, z$; followed by zero or more occurrences of the sequence $\delta, e, \delta, z$; and finally $\epsilon, d$;
\item $d, \epsilon, z$; followed by zero or more occurrences of the sequence $\delta, e, \delta, z$; and finally $\delta, e, \zeta, d$.
\end{itemize}
The full sequence of edges and angles incident and adjacent to $v$, which returns to the starting edge, is thus composed of sequences of the two types given above, where $(\epsilon, e)$ and $(\zeta, z)$ may (but do not have to) change roles after each occurrence of $d$. Note that for both types of subsequences, the number of angles of each type ($\epsilon$, $\delta$ and $\zeta$) is of the same parity. The lemma follows.
\end{proof}

Recall that from the fact that the triangles are acute and scalene, we have $\pi/3 < \gamma < \pi/2$ and $\gamma - \beta < \alpha$.

\begin{lemma}\label{lem:maxgamma} Let ${\cal T}$ be a reptiling of a scalene, oblique triangle $T$ without interior half vertices.
\begin{compactitem}
\item[(i)] For any full vertex $v$ of ${\cal T}$, we have $\deg_{\gamma}(v) \leq 2$.
\item[(ii)] For any boundary vertex $u$ of ${\cal T}$, we have $\deg_{\gamma}(u) \leq 1$.
\end{compactitem}
\end{lemma}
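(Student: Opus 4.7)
The approach is to combine the angle-sum equation around each vertex with the parity lemma (Lemma~\ref{lem:parity}) for full vertices, and to exploit the scalene hypothesis ($\alpha<\beta<\gamma$), obliqueness ($\gamma\neq\pi/2$), and the recalled acute bound $\pi/3<\gamma<\pi/2$ to squeeze $\deg_\gamma$ down. Write $d_\zeta = \deg_\zeta(\cdot)$ throughout.

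For (i), let $v$ be a full vertex; since there are no interior half vertices, $v$ lies in the interior of $T$, and the tiles around $v$ tile a disk, so $d_\alpha\alpha + d_\beta\beta + d_\gamma\gamma = 2\pi$. Subtracting $2(\alpha+\beta+\gamma)=2\pi$ rewrites this as
\[
(d_\alpha - 2)\alpha + (d_\beta - 2)\beta \;=\; (2 - d_\gamma)\gamma.
\]
Assume for contradiction $d_\gamma\geq 3$ and split on parity. By Lemma~\ref{lem:parity} the three degrees agree mod~$2$. If $d_\gamma$ is odd, then $d_\alpha,d_\beta\geq 1$, so the left side is bounded below by $-\alpha-\beta=\gamma-\pi$; this forces $d_\gamma \leq 1+\pi/\gamma < 4$, hence $d_\gamma = 3$. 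I would then enumerate the admissible pairs $(d_\alpha,d_\beta)\in\{(1,1),(1,3),(3,1),(3,3)\}$ and observe that each forces either $\gamma=\pi/2$ (contradicting oblique), an equality between two of $\alpha,\beta,\gamma$ (contradicting scalene), or negativity of an angle. If $d_\gamma$ is even and $\geq 4$, the analogous lower bound $-2\alpha-2\beta=2(\gamma-\pi)$ gives $d_\gamma \leq 4+2\pi/\gamma$, and any $d_\gamma\geq 6$ already yields $\gamma\leq\pi/3$, contradicting acuteness; the remaining case $d_\gamma=4$ is dispatched by a brief check of the small even pairs $(d_\alpha,d_\beta)$, each of which again violates oblique, scalene, or positivity.

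For (ii), the angle-sum at a boundary vertex $u$ equals $\alpha$, $\beta$, $\gamma$, or $\pi$, according to whether $u$ is $A$, $B$, $C$, or a half vertex on the interior of a side of $T$. The master cases are immediate from scalene: at $A$ or $B$ a single $\gamma$-adjacency already exceeds the available angle ($\gamma>\alpha$ and $\gamma>\beta$), so $d_\gamma=0$; at $C$ two $\gamma$-adjacencies would exceed $\gamma$, so $d_\gamma\leq 1$. For a boundary half vertex, subtracting $\alpha+\beta+\gamma=\pi$ from the angle sum gives $(d_\alpha-1)\alpha+(d_\beta-1)\beta=(1-d_\gamma)\gamma$, and the bound $\geq -\alpha-\beta=\gamma-\pi$ on the left combined with $d_\gamma\geq 2$ forces a short case check that collapses into $\gamma=\pi/2$, a scalene violation, or a sign contradiction; for $d_\gamma\geq 3$ one directly obtains $\gamma\leq\pi/3$, contradicting acuteness.

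The main obstacle is the bookkeeping in part~(i) for $d_\gamma\in\{3,4\}$, where we must verify that every admissible small pair $(d_\alpha,d_\beta)$ is killed by exactly one of the hypotheses: oblique, scalene, or positivity of the angles. The cancellation via $\alpha+\beta+\gamma=\pi$ is what reduces the infinite search to a short finite check, while the recalled bound $\gamma>\pi/3$ is what lets all higher degrees be eliminated by a single inequality.
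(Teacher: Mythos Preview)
Your approach is correct and essentially identical to the paper's: both arguments combine the angle-sum equation at a vertex with the parity lemma (Lemma~\ref{lem:parity}) and then kill the remaining small cases $d_\gamma\in\{3,4,5\}$ using only $\gamma>\pi/3$ (from scalene), $\gamma\neq\pi/2$ (from oblique), and $\alpha<\beta<\gamma$. The subtraction $(d_\alpha-2)\alpha+(d_\beta-2)\beta=(2-d_\gamma)\gamma$ is a cosmetic repackaging of the same bookkeeping the paper does directly; your enumeration of the small pairs $(d_\alpha,d_\beta)$ matches the paper's casework line for line.

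Two small remarks. First, your stated bound in the even case, ``$d_\gamma\le 4+2\pi/\gamma$'', is a slip: the correct inequality from $(2-d_\gamma)\gamma\ge -2(\alpha+\beta)=2\gamma-2\pi$ is $d_\gamma\le 2\pi/\gamma$, which is exactly what you then use when you say ``$d_\gamma\ge 6$ yields $\gamma\le\pi/3$''. Second, you never actually need the upper half of the ``acute bound'' $\gamma<\pi/2$ that you cite; like the paper, your argument uses only $\gamma>\pi/3$ and $\gamma\neq\pi/2$, so the proof applies to all scalene oblique triangles as the lemma claims. For part~(ii) your treatment is, if anything, slightly more explicit than the paper's, which simply invokes Observation~\ref{obs:gammameet} for the angle-sum-$\pi$ case.
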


\begin{proof}
(i) Let $v$ be a full vertex in the reptiling. Because $T$ is not equilateral, we have $\gamma > \pi/3$. Therefore $\deg_{\gamma}(v) \leq 5$, so we only need to analyze the cases $\deg_{\gamma}(v) = 3$, $\deg_{\gamma}(v) = 4$, and $\deg_{\gamma}(v) = 5$. For each of these cases we will show that the angles meeting in $v$ cannot sum up to $2\pi$.\\
$\deg_{\gamma}(v) = 3$: By Lemma~\ref{lem:parity}, we know that $\deg_{\alpha}(v)$ and $\deg_{\beta}(v)$ must each be either 1 or at least 3. If $\deg_{\alpha}(v) = \deg_{\beta}(v) = 1$, the angles around $v$ sum up to $\alpha + \beta + 3\gamma = \pi + 2\gamma \neq 2\pi$ (since $T$ is not a right triangle). However, if $\deg_{\alpha}(v) \geq 3$ or $\deg_{\beta}(v) \geq 3$, the angles around $v$ sum up to at least $3\alpha + \beta + 3\gamma = 3(\alpha + \beta + \gamma) - 2\beta > 2\pi$.\\
$\deg_{\gamma}(v) = 4$: By Lemma~\ref{lem:parity}, we know that $\deg_{\alpha}(v)$ and $\deg_{\beta}(v)$ must each be either 0 or at least 2.
If $\deg_{\alpha}(v) = \deg_{\beta}(v) = 0$, the angles around $v$ sum up to $4\gamma \neq 2\pi$ (since $T$ is not a right triangle). However, if $\deg_{\alpha}(v) \geq 2$ or $\deg_{\beta}(v) \geq 2$, the angles around $v$ sum up to at least $2\alpha + 4\gamma > 2\alpha + 2\beta + 2\gamma = 2\pi$.\\
$\deg_{\gamma}(v) = 5$: By Lemma~\ref{lem:parity}, we know that $\deg_{\alpha}(v)$ and $\deg_{\beta}(v)$ must each be at least 1. Therefore the angles around $v$ sum up to at least $\alpha + \beta + 5\gamma = \pi + 4\gamma > 7\pi/3$, which is too much.\\
This proves part (i) of the lemma.

(ii) This is just restating Observation~\ref{obs:gammameet}.
\end{proof}

\begin{lemma}\label{lem:pigeonholegamma}
Let ${\cal T}$ be a reptiling of a scalene, oblique triangle without interior half vertices.
\begin{compactitem}
\item[(i)] For any full vertex $v$ of ${\cal T}$, we have $\deg_{\gamma}(v) = 2$.
\item[(ii)] For any boundary vertex $u$ of ${\cal T}$ that is not one of the master vertices $A$ and $B$, we have $\deg_{\gamma}(u) = 1$.
\end{compactitem}
\end{lemma}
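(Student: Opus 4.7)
The plan is a tight double-count of $\gamma$-corners. Let $f$ and $h$ denote the numbers of full and half vertices of $\cal T$; since by assumption no half vertex is hanging, all $h$ half vertices lie on the boundary of $T$. Lemma~\ref{lem:euler} then gives $r = 2f + h + 1$.

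Each of the $r$ tiles contains exactly one $\gamma$-corner, and every such corner sits at some vertex of $G_{\cal T}$, so
\[
r \;=\; \sum_{v} \deg_{\gamma}(v),
\]
summed over all vertices. I would next bound this sum from above by classifying vertices. At the master vertex $A$ the total interior angle is $\alpha < \gamma$, so $\deg_{\gamma}(A)=0$; likewise $\deg_{\gamma}(B)=0$. At $C$ the total angle is exactly $\gamma$, so at most one tile can place a $\gamma$-corner there, giving $\deg_{\gamma}(C) \leq 1$. For every full vertex $v$, part~(i) of Lemma~\ref{lem:maxgamma} gives $\deg_{\gamma}(v) \leq 2$, and for every non-master half vertex $u$ (which, by our standing assumption, lies on the boundary of $T$) part~(ii) of the same lemma gives $\deg_{\gamma}(u) \leq 1$.

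Adding these upper bounds yields
\[
r \;\leq\; 0 + 0 + 1 + 2f + h \;=\; 2f + h + 1 \;=\; r,
\]
so every inequality in the chain must be an equality. This immediately forces $\deg_{\gamma}(v) = 2$ at every full vertex $v$, and $\deg_{\gamma}(u) = 1$ at every non-master half vertex $u$ as well as at $C$, which together cover precisely the statements of (i) and (ii).

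I do not expect a real obstacle here: all the heavy lifting has already been done, with Lemma~\ref{lem:maxgamma} supplying the per-vertex upper bounds and Lemma~\ref{lem:euler} supplying exactly the right global count $2f+h+1$ for the sum to be squeezed to an equality. The only small thing to be careful about is that the case $u = C$ in part~(ii) has to be handled by the trivial angle-at-$C$ bound rather than by Lemma~\ref{lem:maxgamma}(ii), but it falls out of the same equality chain.
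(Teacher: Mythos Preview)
Your proposal is correct and is essentially the same argument as the paper's: a pigeonhole double-count of the $r$ $\gamma$-corners against the upper bounds from Lemma~\ref{lem:maxgamma}, with Lemma~\ref{lem:euler} supplying the exact total $2f+h+1$ so that every per-vertex inequality is forced to equality. The paper's version is terser but identical in substance; your explicit handling of the three master vertices (noting $\deg_\gamma(A)=\deg_\gamma(B)=0$ since $\alpha,\beta<\gamma$, and $\deg_\gamma(C)\le 1$) is a welcome clarification.
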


\begin{proof}
Lemma~\ref{lem:euler} implies that the $r$ angles of size $\gamma$ in the tiles are exactly accounted for if each interior vertex accommodates two of them, each boundary vertex fits one, and one of the master vertices fits one more. By Lemma~\ref{lem:maxgamma}, this is also the maximum possible for each of these vertices. Thus, the pigeon hole principle leaves no room for any vertex to have fewer incident angles of size $\gamma$. Lemma~\ref{lem:pigeonholegamma} follows.
\end{proof}

\begin{lemma}\label{lem:maxbeta}
Let ${\cal T}$ be a reptiling of a scalene, oblique triangle without interior half vertices.
\begin{compactitem}
\item[(i)] For any full vertex $v$ of ${\cal T}$, we have $\deg_{\beta}(v) \leq 2$.
\item[(ii)] For any boundary vertex $u$ of ${\cal T}$, we have $\deg_{\beta}(u) \leq 1$.
\end{compactitem}
\end{lemma}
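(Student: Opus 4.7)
The plan is to mimic the argument of Lemma~\ref{lem:maxgamma}, but to leverage the fact that Lemma~\ref{lem:pigeonholegamma} now pins down the $\gamma$-degree exactly: $\deg_\gamma(v) = 2$ at every full vertex, and $\deg_\gamma(u) = 1$ at every boundary vertex other than $A$ and $B$. This collapses the multi-case enumeration over the possible values of $\deg_\gamma$ that was needed in Lemma~\ref{lem:maxgamma} to a single inequality in each case, since most of the angle budget at $v$ or $u$ is already accounted for by $\gamma$-angles.

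For part~(i), let $v$ be a full vertex. Using $\deg_\gamma(v) = 2$ from Lemma~\ref{lem:pigeonholegamma}(i), the angle-sum around $v$ rearranges to $\deg_\alpha(v)\,\alpha + \deg_\beta(v)\,\beta = 2\pi - 2\gamma = 2\alpha + 2\beta$. Lemma~\ref{lem:parity} gives $\deg_\beta(v) \equiv \deg_\gamma(v) = 2 \pmod{2}$, so $\deg_\beta(v)$ is even. If $\deg_\beta(v) \geq 4$, then $4\beta \leq \deg_\alpha(v)\,\alpha + \deg_\beta(v)\,\beta = 2\alpha + 2\beta$, yielding $\beta \leq \alpha$, contradicting the scalene hypothesis $\alpha < \beta$. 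Hence $\deg_\beta(v) \leq 3$, and by parity $\deg_\beta(v) \leq 2$.

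For part~(ii), I would split on the type of boundary vertex $u$. If $u$ lies in the interior of a side of $T$, the angle-sum around $u$ is $\pi$, and Lemma~\ref{lem:pigeonholegamma}(ii) gives $\deg_\gamma(u) = 1$, so $\deg_\alpha(u)\,\alpha + \deg_\beta(u)\,\beta = \pi - \gamma = \alpha + \beta$; if $\deg_\beta(u) \geq 2$, then $2\beta \leq \alpha + \beta$, again contradicting $\alpha < \beta$. At $u = A$, the angle-sum is $\alpha < \beta$, which forces $\deg_\beta(A) = 0$. At $u = B$, the angle-sum is $\beta$, giving $\deg_\beta(B) \leq 1$ trivially. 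At $u = C$, Lemma~\ref{lem:pigeonholegamma}(ii) supplies $\deg_\gamma(C) = 1$, and the residual budget $\gamma - \gamma = 0$ forces $\deg_\beta(C) = 0$.

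I do not expect any genuine obstacle: the proof is essentially a mechanical exploitation of Lemma~\ref{lem:pigeonholegamma} together with Lemma~\ref{lem:parity}. The only subtlety worth being explicit about is the treatment of $u = C$: one must invoke the pigeonhole identity $\deg_\gamma(C) = 1$ rather than attempting to imitate the inequality argument used at half vertices on the interior of a side, because that argument would require the bound $2\beta > \gamma$, which holds for acute triangles but need not hold for all oblique ones.
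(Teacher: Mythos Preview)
Your argument is correct and follows essentially the same route as the paper's proof: invoke Lemma~\ref{lem:pigeonholegamma} to fix the $\gamma$-degree, then use Lemma~\ref{lem:parity} (for full vertices) together with the inequality $\beta > \alpha$ to rule out $\deg_\beta \geq 4$ (respectively $\geq 2$). If anything, you are more careful than the paper in part~(ii): the paper simply writes ``by Lemma~\ref{lem:pigeonholegamma}, $\deg_\gamma(u)=1$'' and argues uniformly, whereas you treat the master vertices $A$, $B$, $C$ separately, which is appropriate since Lemma~\ref{lem:pigeonholegamma}(ii) explicitly excludes $A$ and $B$.
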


\begin{proof}
(i) Let $v$ be a full vertex of ${\cal T}$. By Lemma~\ref{lem:pigeonholegamma}, we have $\deg_{\gamma}(v) = 2$. Suppose $\deg_{\beta}(v) > 2$, and thus, by Lemma~\ref{lem:parity}, $\deg_{\beta}(v) \geq 4$. Then the angles around $v$ sum up to at least $4\beta + 2\gamma > 2\alpha + 2\beta + 2\gamma = 2\pi$, which is too much. Hence $\deg_{\beta}(v) \leq 2$, which establishes part (i) of the lemma.

(ii) Let $u$ be a boundary vertex of ${\cal T}$. By Lemma~\ref{lem:pigeonholegamma}, we have $\deg_{\gamma}(u) = 1$. Suppose $\deg_{\beta}(u) > 1$, and thus, $\deg_{\beta}(u) \geq 2$. Then the angles around $u$ sum up to at least $2\beta + \gamma > \alpha + \beta + \gamma = \pi$, which is too much. Hence $\deg_{\beta}(u) \leq 1$, which establishes part (ii) of the lemma.
\end{proof}

\begin{lemma}\label{lem:pigeonholebeta}
Let ${\cal T}$ be a reptiling of a scalene, oblique triangle without interior half vertices.
\begin{compactitem}
\item[(i)] For any full vertex $v$ of ${\cal T}$, we have $\deg_{\beta}(v) = 2$.
\item[(ii)] For any boundary vertex $u$ of ${\cal T}$ that is not one of the master vertices $A,C$, we have $\deg_{\beta}(u) = 1$.
\end{compactitem}
\end{lemma}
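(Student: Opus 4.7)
The plan is to mirror the pigeonhole argument that proved Lemma~\ref{lem:pigeonholegamma}: I will count the total number of $\beta$-angles across all tiles, sum the per-vertex upper bounds supplied by Lemma~\ref{lem:maxbeta}, and then use Lemma~\ref{lem:euler} to force every inequality to become an equality. The subtlety is that a naive application of these bounds leaves a one-unit slack: combining Lemma~\ref{lem:maxbeta} with the trivial observation $\deg_\beta(A)=0$ (since the interior angle at $A$ equals $\alpha<\beta$) only yields an upper bound of $2f + h + 0 + 1 + 1 = 2f+h+2$ on the total, whereas Lemma~\ref{lem:euler} tells us the total is exactly $r = 2f+h+1$.

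The first key step is to close this gap by proving $\deg_\beta(C)=0$. This falls out of the previous lemma essentially for free: Lemma~\ref{lem:pigeonholegamma}(ii) applied to the boundary vertex $C$ already gives $\deg_\gamma(C)=1$, so there is a tile with its $\gamma$-corner at $C$; but the interior angle of $T$ at $C$ is exactly $\gamma$, so that single tile fills the entire angle at $C$ and no other tile can have a corner there. In particular $\deg_\beta(C)=0$, which improves the sum of upper bounds to $2f+h+0+1+0 = 2f+h+1 = r$.

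With equality now enforced in the total, each individual upper bound must be attained. That immediately delivers $\deg_\beta(v)=2$ at every full vertex $v$, proving part~(i), and $\deg_\beta(u)=1$ at every boundary vertex $u$ other than $A$ and $C$ (i.e. at $B$ and at every half vertex), proving part~(ii). The main obstacle I anticipate is not any hard computation but the conceptual step of recognizing that Lemma~\ref{lem:pigeonholegamma} already does the heavy lifting at $C$: without using that $\deg_\gamma(C)=1$ forces a whole tile into the corner at $C$, the slack of $1$ in the counting could a priori sit on $B$ or on a half vertex, and we would be unable to conclude $\deg_\beta(B)=1$ from the pigeonhole alone.
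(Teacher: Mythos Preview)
Your argument is correct and is exactly the argument the paper has in mind when it says the proof is ``completely analogous'' to that of Lemma~\ref{lem:pigeonholegamma}. You have, in fact, been more explicit than the paper about the one place where the analogy is not automatic: showing $\deg_\beta(C)=0$ by invoking $\deg_\gamma(C)=1$ from Lemma~\ref{lem:pigeonholegamma} so that the single $\gamma$-tile fills the corner at $C$; this is precisely the step that removes the slack of~$1$ and makes the pigeonhole tight.
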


\begin{proof}
The proof is completely analogous to the proof of Lemma~\ref{lem:pigeonholegamma}.
\end{proof}

\begin{lemma}\label{lem:allvertices}
Let ${\cal T}$ be a reptiling of a scalene, oblique triangle without interior half vertices.
\begin{compactitem}
\item[(i)] For any full vertex $v$ of ${\cal T}$, we have $\deg_{\gamma}(v) = \deg_{\beta}(v) = \deg_{\alpha}(v)  = 2$.
\item[(ii)] For any boundary vertex $u$ of ${\cal T}$ that is not one of the master vertices, we have $\deg_{\gamma}(u) = \deg_{\beta}(u) = \deg_{\alpha}(u)  = 1$.
\item[(iii)] For the three master vertices of ${\cal T}$, we have, in total, one adjacency of each type $\alpha, \beta$ and $\gamma$.
\end{compactitem}
\end{lemma}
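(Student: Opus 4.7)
The plan is to leverage the preceding lemmas, which have already pinned down the $\beta$- and $\gamma$-degrees at full and non-master boundary vertices; the three parts of Lemma~\ref{lem:allvertices} then follow from simple angle-sum computations together with a counting argument based on Euler's relation from Lemma~\ref{lem:euler}.

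For part (i), fix any full vertex $v$. By Lemma~\ref{lem:pigeonholegamma}(i) and Lemma~\ref{lem:pigeonholebeta}(i), we have $\deg_\gamma(v) = \deg_\beta(v) = 2$. The angles of the tiles incident to $v$ partition the full turn around $v$ and each such angle is one of $\alpha$, $\beta$, $\gamma$; therefore
\[
  \deg_\alpha(v)\cdot\alpha + 2\beta + 2\gamma \;=\; 2\pi,
\]
and using $\alpha+\beta+\gamma=\pi$ we get $\deg_\alpha(v)\cdot\alpha = 2\alpha$, i.e.\ $\deg_\alpha(v)=2$. For part (ii), fix a boundary vertex $u$ other than a master vertex. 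Then $u$ lies in the interior of one of the sides of $T$, so the tile angles at $u$ sum to $\pi$. By Lemmas~\ref{lem:pigeonholegamma}(ii) and~\ref{lem:pigeonholebeta}(ii) we have $\deg_\gamma(u)=\deg_\beta(u)=1$, whence $\deg_\alpha(u)\cdot\alpha = \pi-\beta-\gamma = \alpha$ and thus $\deg_\alpha(u)=1$.

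For part (iii), I count the total number of $\alpha$-angles across all tiles in two different ways. Each of the $r$ tiles, being similar to $T$, contributes exactly one $\alpha$-angle, so the grand total is $r$. On the other hand, since by hypothesis there are no interior half vertices, every half vertex lies on the boundary of $T$ in the interior of one of its sides, so the $h$ half vertices are precisely the non-master boundary vertices. By parts (i) and (ii) just proved, the $f$ full vertices collectively host $2f$ $\alpha$-angles and these $h$ non-master boundary vertices host $h$ more. Hence the three master vertices jointly host
\[
  r - 2f - h \;=\; 1
\]
$\alpha$-angles, where the equality is Lemma~\ref{lem:euler}. The identical counting argument, applied to the $\beta$- and $\gamma$-angles and using Lemmas~\ref{lem:pigeonholebeta} and~\ref{lem:pigeonholegamma} instead, shows that the master vertices also host exactly one $\beta$-adjacency and one $\gamma$-adjacency in total, which is part (iii).

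The key (and only) non-routine observation is the identification of non-master boundary vertices with the set of half vertices, which uses crucially the no-interior-half-vertex hypothesis; once this is in hand everything else is arithmetic. There is no real obstacle in this lemma: all the substantial work was done in establishing the $\gamma$- and $\beta$-pigeonhole lemmas above, and Lemma~\ref{lem:allvertices} is essentially the bookkeeping summary that packages those results together with the angle sum at each vertex.
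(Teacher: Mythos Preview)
Your proof is correct and follows essentially the same approach as the paper: invoke Lemmas~\ref{lem:pigeonholegamma} and~\ref{lem:pigeonholebeta} for the $\gamma$- and $\beta$-degrees, and then read off the $\alpha$-degrees from the angle sum at each vertex (for (i) and (ii)) and from the Euler count $r=2f+h+1$ (for (iii)). The paper's proof is simply the one-line version of this (``the counts of $\alpha$ angles follow''); your write-up spells out the arithmetic that is left implicit there.
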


\begin{proof}
The counts of $\gamma$ angles are given by Lemma~\ref{lem:pigeonholegamma}.
The counts of $\beta$ angles are given by Lemma~\ref{lem:pigeonholebeta}.
The counts of $\alpha$ angles follow.
\end{proof}

\begin{lemma}\label{lem:allverticesisosceles}
Let ${\cal T}$ be a (hypothetical) non-trivial reptiling of an isosceles, oblique triangle $T$ without interior half vertices. Let $\lambda$ and $\alpha$ be the base and top angles of $T$.
\begin{compactitem}
\item[(i)] For any full vertex $v$ of ${\cal T}$, we have $\deg_{\lambda}(v) = 4$ and $\deg_{\alpha}(v) = 2$.
\item[(ii)] For any boundary vertex $u$ of ${\cal T}$ that is not one of the master vertices, we have $\deg_{\lambda}(u) = 2$ and $\deg_{\alpha}(u) = 1$.
\item[(iii)] For the three master vertices of ${\cal T}$, we have, in total, two adjacencies of type $\lambda$ and one adjacency of type $\alpha$.
\end{compactitem}
\end{lemma}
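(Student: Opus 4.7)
The proof follows the same template as the proof of Lemma~\ref{lem:allvertices}, but the isosceles setting admits a much cleaner argument because only two distinct angles are present, and Theorem~\ref{thm:onlytrivial}(ii) already restricts us to rational triangles. Let $a$ denote the length of the base (opposite $\alpha$) and $b$ the length of the two equal sides, so that $a/b = 2\sin(\alpha/2)$.

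First I would use Theorem~\ref{thm:onlytrivial}(ii) together with Niven's theorem to pin down the ratio $\alpha/\lambda$. A non-trivial reptiling of the isosceles oblique triangle $T$ exists only if $T$ is rational, that is, $a/b$ is rational and different from $1$. If $\alpha/\pi$ were rational, then $(\alpha/2)/\pi$ would be rational, and Niven's theorem would force $\sin(\alpha/2) \in \{0, \pm\tfrac12, \pm 1\}$, giving $a/b \in \{0,1,2\}$; but each value is excluded ($0$ is degenerate, $1$ is equilateral, $2$ forces $\alpha=\pi$). Hence $\alpha/\pi$ is irrational, and since $\lambda=(\pi-\alpha)/2$, so is $\alpha/\lambda$. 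In particular, $\alpha$ and $\lambda$ are linearly independent over $\mathbb{Q}$.

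Next I would read off the degrees at each vertex type from the local angle-sum identity, using only this $\mathbb{Q}$-linear independence. At any full vertex $v$, $\deg_\alpha(v)\alpha+\deg_\lambda(v)\lambda=2\pi=2\alpha+4\lambda$, so $(\deg_\alpha(v)-2)\alpha+(\deg_\lambda(v)-4)\lambda=0$, which forces $\deg_\alpha(v)=2$ and $\deg_\lambda(v)=4$. At any non-master boundary half vertex $u$, the sum is $\pi=\alpha+2\lambda$, which analogously yields $\deg_\alpha(u)=1$ and $\deg_\lambda(u)=2$. At the top master vertex the tiles fill an angle of $\alpha$, forcing $\deg_\alpha=1$ and $\deg_\lambda=0$; at each base master vertex they fill an angle of $\lambda$, forcing $\deg_\alpha=0$ and $\deg_\lambda=1$. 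Summing over the three master vertices recovers part~(iii).

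The main obstacle is the first step: establishing the $\mathbb{Q}$-linear independence of $\alpha$ and $\lambda$ for any isosceles oblique triangle that admits a non-trivial reptiling. Once this is in hand, the rest of the proof is a mechanical application of linear independence and is more direct than its scalene counterpart, because in the isosceles case it does not rely on a parity lemma or a pigeonhole combination of separate degree bounds at all. An alternative route that avoids Niven's theorem would mimic Lemmas~\ref{lem:parity}--\ref{lem:maxbeta} in the scalene case: derive the parity constraint $\deg_\lambda(v)\equiv 0\pmod 2$ from the fact that each tile has only one $a$-edge (so gaps adjacent to an $a$-edge are always $\lambda$-gaps), then use obliqueness and non-equilaterality to rule out all small configurations other than $(\deg_\alpha(v),\deg_\lambda(v))=(2,4)$, and finally close by pigeonhole via Lemma~\ref{lem:euler}.
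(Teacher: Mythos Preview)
Your proof is correct and follows essentially the same route as the paper's: both invoke Theorem~\ref{thm:onlytrivial} to force rationality of the side ratio (the paper writes this as $\cos\lambda\in\mathbb{Q}$, you as $\sin(\alpha/2)\in\mathbb{Q}$, which is the same quantity since $\lambda=\pi/2-\alpha/2$), apply Niven's theorem to deduce that $\lambda/\pi$ (equivalently $\alpha/\pi$) is irrational, and then read off the degrees from the angle-sum equation by linear independence. Your presentation is slightly more explicit about parts~(ii) and~(iii), which the paper dismisses as ``analogous'', but the argument is the same.
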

\begin{proof}
By Theorem~\ref{thm:onlytrivial}, $T$ must be rational, that is, $\cos\lambda$ must be rational. Furthermore $\lambda \neq \pi/3$, since then $T$ would be equilateral and only admit trivial reptilings. Therefore, by Niven's theorem, $\lambda/\pi$ is irrational.

We will now prove part (i) of the lemma; parts (ii) and (iii) are analogous. Let $v$ be a full vertex of ${\cal T}$. We have $\deg_{\lambda}(v) \cdot \lambda + \deg_{\alpha}(v) \cdot \alpha = 2\pi$, and $4\lambda + 2\alpha = 2\pi$. Suppose, for the sake of contradiction, that $(\deg_{\lambda}(v), \deg_{\alpha}(v)) \neq (4,2)$. Then, solving for $\lambda$, we find that $\lambda/\pi$ is a rational number, which contradicts our conclusion that $\lambda/\pi$ is irrational. Therefore we must have $(\deg_{\lambda}(v), \deg_{\alpha}(v)) = (4,2)$.
\end{proof}

\begin{figure}
\centering
\includegraphics{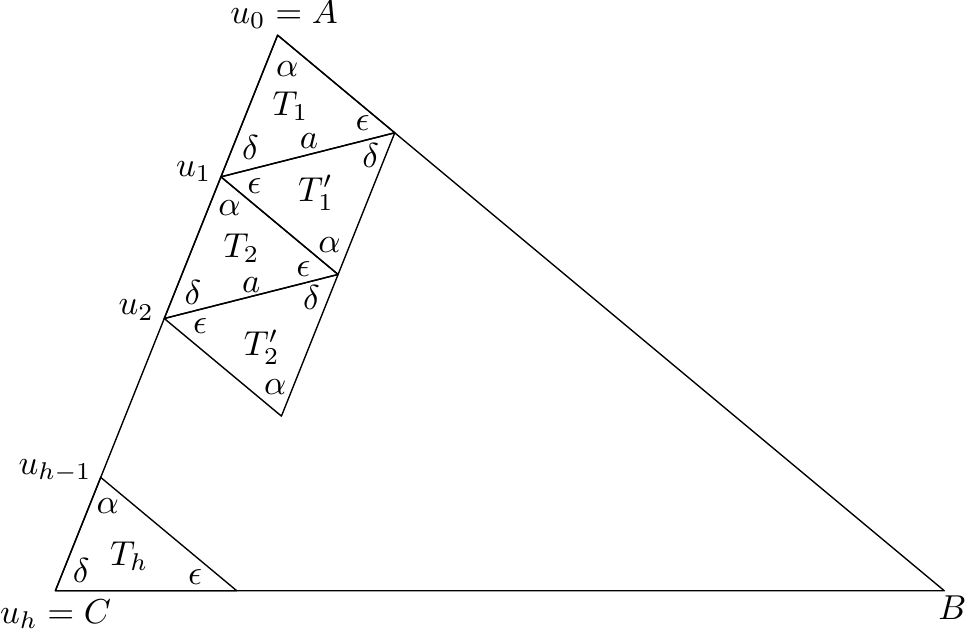}
\caption{Illustration of the construction in the proof of Theorem~\ref{thm:withouthalfverticesonlygrid}.}
\label{fig:conformalistrivial}
\end{figure}

\begin{proof}[Proof of Theorem~\ref{thm:withouthalfverticesonlygrid}]
Let $\mathcal T$ be an $n^2$-reptiling of an oblique triangle $T$ without interior half vertices. We prove that any tile sharing either a single vertex or a whole edge with $CA$ has to be placed exactly as in the trivial tiling. Following this, an inductive argument on the remaining $(n-1)^2$ tiles that must cover the remaining triangle finishes the proof.

Fix $T$ as in Section~\ref{sec:irrationalistrivial}, i.e. $BC$ is fixed horizontally with $B$ to the right of $C$, and $A$ above $BC$. More specifically, if $T$ is isosceles, let $A$ be the top vertex with angle $\alpha$ let $B$ and $C$ the base vertices with angles $\lambda$. Let $T_1, \ldots, T_h$ be the tiles sharing an edge with $CA$, top to bottom (i.e. $T_1$ is adjacent to master vertex $A$). Let $A=u_0, u_1, \ldots, u_h=C$ be the master vertices and half vertices on $CA$ which constitute the corners of the tiles $T_i$ ($1 \leq i \leq h$). Note that due to Lemma~\ref{lem:allvertices}(ii) (if $T$ is scalene) or Lemma~\ref{lem:allverticesisosceles}(ii) (if $T$ is isosceles), every vertex $u_i$ must be adjacent to exactly one more tile for $1 \leq i \leq h-1$. Let $T'_1, \ldots, T'_{h-1}$ be those tiles, where $u_i$ is the unique vertex of $T'_i$ that lies on $CA$, see Figure~\ref{fig:conformalistrivial}.

By Lemma \ref{lem:allvertices}(iii) (if $T$ is scalene) or Lemma~\ref{lem:allverticesisosceles}(iii) (if $T$ is isosceles), $T_1$ must be $\alpha$-adjacent to $A$. Let $\delta$ be the angle of $T_1$ at vertex $u_1$, and let $\epsilon$ be the third angle of $T_1$. We will now show by induction on $i$ that, for $1 < i \leq h$, each $T'_{i-1}$ is a translate of $T_1$ rotated 180 degrees, and each $T_i$ is a translate of $T_1$. We take the second part of this claim for $i = 1$ as the trivial base case. Now, given that $T_{i-1}$ is a translate of $T_1$, with its $\alpha$-angle at $u_{i-2}$ and a $\delta$-angle at $u_{i-1}$, we observe the following. In order to avoid a half vertex in the interior of $T$ along the side of $T_{i-1}$ of length $a$, the tile $T'_{i-1}$ has to share its side of length $a$ with $T_{i-1}$. Thus $T'_{i-1}$ has to be $\epsilon$-adjacent to $u_1$ by Lemma~\ref{lem:allvertices}(ii) (if $T$ is scalene) or simply by the fact that $\delta = \epsilon = \beta = \gamma$ (if $T$ is isosceles). Now $T_i$ must  share the edge opposite of angle $\delta$ with the corresponding edge of $T'_{i-1}$ in order to avoid creating a half vertex, and thus, $T_i$ is $\delta$-adjacent to $u_i$. In particular, $T_h$ is $\delta$-adjacent to $u_h = C$, and therefore, by Lemma \ref{lem:allvertices}(iii), $\delta = \gamma$ and $\epsilon = \beta$. Thus all tiles touching $CA$ are placed exactly as in the trivial tiling. The theorem follows.
\end{proof}

\section{Corner-splitting reptilings}
\label{sec:cornersplittingtilings}

A \emph{$k$-splitting} gentiling ${\cal T}$ of a triangle $T$ is a gentiling of $T$ in which $k$ tiles meet in one of the vertices of $T$, where $k \geq 2$. We say a gentiling is corner-splitting if it is $k$-splitting for some $k \geq 2$. In this section we prove Theorem~\ref{main2}: only for $k = 2$ there exist acute triangles that admit $k$-splitting reptilings. In fact, we prove a little more. In Section~\ref{sec:splittingconstruction} we describe a construction of $k$-splitting gentilings and reptilings that proves the following:

\begin{theorem}\label{thm:gent2split}
\begin{compactitem}
\item[(i)] Any triangle with angles $0 < \alpha < \pi/4$, $2\alpha$ and $\pi-3\alpha$ such that $\cos^2\alpha$ is rational, admits a 2-splitting gentiling.
\item[(ii)] Any triangle with angles $0 < \alpha < \pi/4$, $2\alpha$ and $\pi-3\alpha$ such that $\cos\alpha$ is rational, admits a 2-splitting reptiling.
\end{compactitem}
\end{theorem}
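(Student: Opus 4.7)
The proof is constructive. Label the corners of $T$ so that $A$, $B$, $C$ carry the angles $\alpha$, $2\alpha$, $\pi-3\alpha$ respectively. The key observation is that $2\alpha = \alpha + \alpha$, which allows the angle at $B$ to be split into two copies of the smallest angle $\alpha$. The first move is therefore to draw the cevian $BD$ from $B$ to the unique point $D$ on $AC$ with $\angle ABD = \angle DBC = \alpha$. Angle-chasing shows that triangle $BCD$ has angles $\alpha$, $\pi-3\alpha$, $2\alpha$, so it is similar to $T$, while triangle $ABD$ is isosceles with apex $D$ (angle $\pi-2\alpha$), equal legs $AD=BD$, and base $AB$. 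This cevian already gives the desired 2-split at $B$: one tile will sit inside $BCD$ and another will come from the decomposition of $ABD$, each contributing an $\alpha$-angle at $B$.

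The second step is to tile the isosceles residue $ABD$ with pieces similar (for part (i)) or congruent (for part (ii)) to $T$. I would use iterated peeling at the apex: since $\pi-2\alpha = \alpha + (\pi-3\alpha)$, I draw a cevian from $D$ to a point $E$ on $AB$ with $\angle ADE = \pi-3\alpha$, cutting off a triangle $ADE$ with angles $\alpha,\pi-3\alpha,2\alpha$ (similar to $T$) and leaving a new isosceles triangle $BDE$ similar to $ABD$ but scaled by $1/(2\cos\alpha)$ (computable via the law of sines, using $AB = 2\,AD\cos\alpha$). Iterating yields a geometric sequence of ever smaller isosceles residues, each contributing one tile similar to $T$.

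The third step closes off the construction by combining this peeling with a trivial $n^2$-reptiling of the final small residue. The peeling alone does not terminate, so one must stop at some finite stage and tile the remaining small isosceles piece directly. Here the rationality hypothesis enters: if $\cos\alpha = p/q \in \mathbb{Q}$ (part (ii)), then after $k$ peelings the residue has scale $(2p/q)^{-k}$ relative to $ABD$, and one can choose $k$ together with an auxiliary integer $n$ so that all side lengths are integer multiples of a common unit; the small residue then admits a trivial reptiling with congruent copies of $T$ at that common unit, and rhombus-flipping of the kind used in Section~\ref{sec:rational} reconciles orientations along the shared boundary with $BCD$. For the gentiling in part (i), tiles of different sizes are allowed, and only the squared ratio $|AB|^2/|BD|^2 = 4\cos^2\alpha$ enters the commensurability conditions when gluing subgrids of different scales across boundaries, so the weaker hypothesis $\cos^2\alpha\in\mathbb{Q}$ suffices.

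The main obstacle is this closing step: ensuring that the peeling can be terminated by a finite reptiling/gentiling of a small isosceles residue whose leg-to-base ratio $1/(2\cos\alpha)$ is an algebraic scaling that must be made commensurate with the chosen uniform (or discrete) tile size. The rationality hypotheses on $\cos\alpha$ or $\cos^2\alpha$ supply precisely the commensurability needed; what remains is careful bookkeeping to verify that the final assembly is a valid gentiling (respectively reptiling) and that the 2-split at $B$ is preserved throughout.
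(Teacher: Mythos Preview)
Your first two moves are correct: the cevian $BD$ does split $T$ into a smaller similar copy $BCD$ and an isosceles triangle $ABD$ with angles $\alpha,\alpha,\pi-2\alpha$, and one peel from the apex of $ABD$ behaves exactly as you say. The gap is entirely in the closing step. The residue left after any number of peels still has angles $\alpha,\alpha,\pi-2\alpha$; it is \emph{not} similar to $T$, whose angles are $\alpha,2\alpha,\pi-3\alpha$. Hence a ``trivial $n^2$-reptiling'' of that residue produces $n^2$ congruent copies of the \emph{isosceles} triangle, not of $T$; your sentence ``the small residue then admits a trivial reptiling with congruent copies of $T$'' is simply false. Nor can you instead subtract the residue from a trivially tiled copy of $T$: the segment $BD$ makes angle $\alpha$ with $BC$, which is not parallel to any side of $T$, so $BD$ never lies on a grid line of the trivial tiling and $ABD$ is never a union of grid tiles. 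Without an independent finite tiling of the isosceles piece by copies of $T$---which you have not supplied and which is far from obvious---the peeling yields only an infinite decomposition, and that is not a gentiling. The commensurability remarks about $\cos\alpha$ versus $\cos^2\alpha$ are therefore premature: they would become relevant only after a finite closing scheme exists.

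The paper's construction sidesteps the isosceles residue entirely. Writing the side ratios as $p:q:r$ with $q^2=p^2+pr$ (so that $p,r\in\mathbb{N}$ exactly when $\cos^2\alpha\in\mathbb{Q}$, and additionally $q\in\mathbb{N}$ when $\cos\alpha\in\mathbb{Q}$), it assembles seven scaled copies of $T$---at scales $p$, $q$, $r$---together with a parallelogram filled by $2p(r-p)$ unit copies of $T$, directly into a copy of $T$ at scale $2p+r$, with two $\alpha$-tiles meeting at the $2\alpha$-corner. The verification is a one-shot arithmetic check on side lengths; for part~(ii) one then trivially reptiles each of the seven large pieces at the unit scale to obtain a $(2p+r)^2$-reptiling. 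No iterative or limiting process is involved.
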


In Section~\ref{sec:nosplitting} we prove that no acute triangle admits a $k$-splitting gentiling with $k \geq 3$ (except, possibly, for two specific triangles that \emph{might} admit a 3-gentiling), and no oblique isosceles triangle admits any corner-splitting reptiling.


\subsection{A construction of 2-splitting gentilings}
\label{sec:splittingconstruction}
We now prove Theorem~\ref{thm:gent2split} by giving an example.

Consider a triangle $T$ with angles $\alpha$, $2\alpha$, and $\pi-3\alpha$. The side lengths have ratios $\sin\alpha : \sin 2\alpha : \sin 3\alpha$, or equivalently, $1 : 2\cos\alpha : (4\cos^2\alpha -1)$. We can also write this as $p : q : r$, where $p < r < 3p$ and $q = \sqrt{p^2 + pr}$. If and only if $\cos^2\alpha$ is rational, we can choose $p$ and $r$ to be natural numbers, and if and only if $\cos\alpha$ is rational as well, we can make sure that $p$, $q$ and $r$ are natural numbers.

\begin{figure}
\centering
\includegraphics{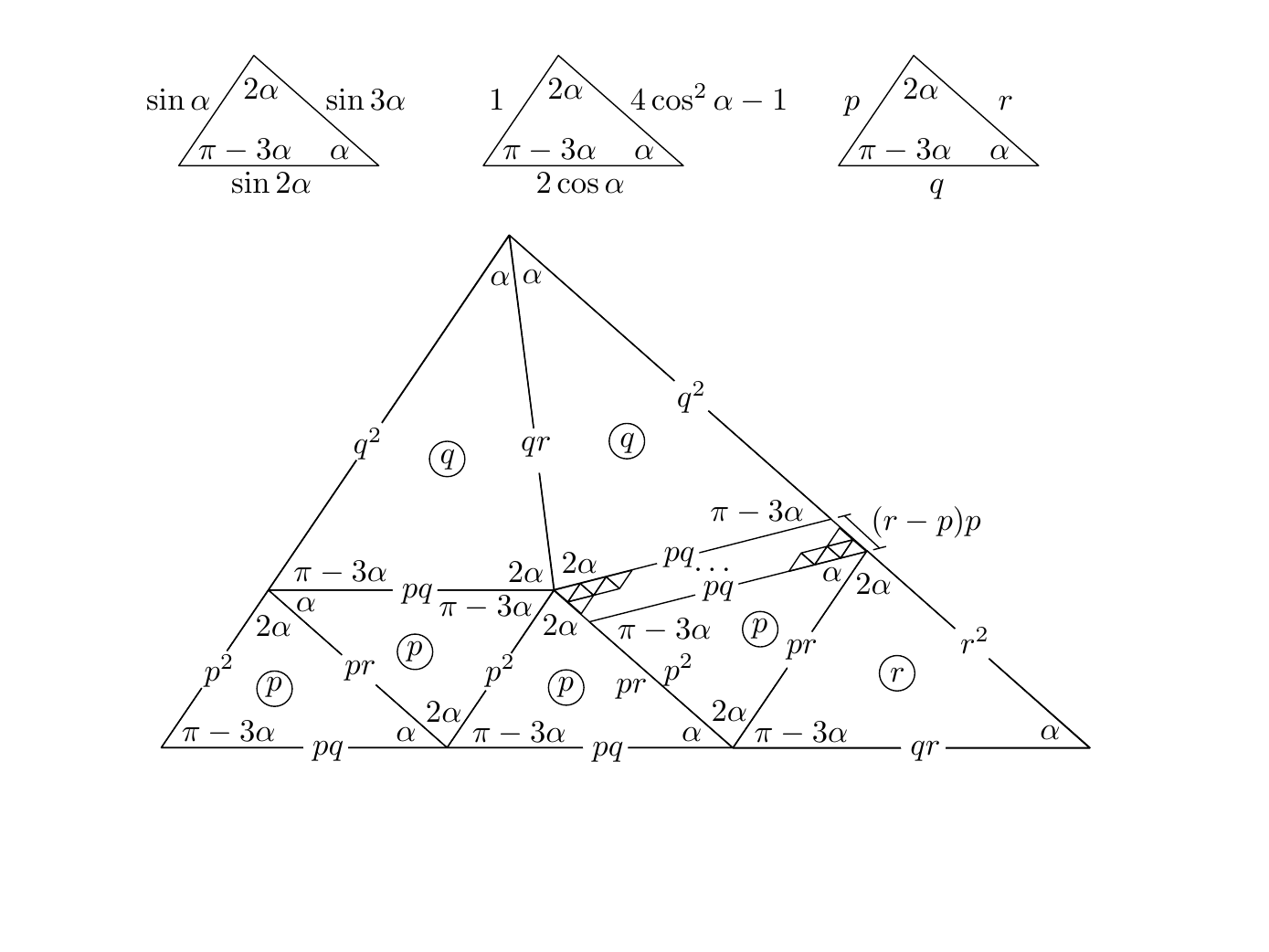}
\caption{Construction of a 2-splitting gentiling for a triangle with angles $\alpha$, $2\alpha$, and $\pi- 3 \alpha$, where $\cos^2\alpha$ is rational. The encircled numbers denote the scale factors of the tiles relative to the smallest triangles in the tiling.}
\label{fig:cornersplittingtiling}

\end{figure}

Figure~\ref{fig:cornersplittingtiling} shows how to assemble seven scaled copies of $T$ (with scale factors $p$, $q$ and $r$) and a parallelogram with $2p(r-p)$ copies of $T$ into a triangle with side lengths $p^2 + q^2 = (2p+r)p$, $(2p+r)q$, and $q^2 + pr - p^2 + r^2 = (2p+r)r$, that is, a copy of $T$ scaled with factor $2p + r$. Note that this construction can be realized if and only if $p$ and $r$ are natural numbers. If, additionally, $q$ is a natural number as well, then the seven scaled triangles can all be tiled with copies of $T$ of the same size that is used to tile the parallelogram, and we obtain a 2-splitting reptiling with $(2p+r)^2$ tiles. The simplest example of such a tiling is obtained with $(p,q,r) = (4,6,5)$, which results in 169 tiles.
(Next best is $(p,q,r) = (9,15,16)$, resulting in $1156$ tiles). This concludes the proof of Theorem~\ref{thm:gent2split}.

Note that the construction is not limited to acute triangles: $p$, $q$ and $r$ can also be chosen such that a reptiling of an obtuse triangle results. For example, with $p = 100$, $q = 190$, and $r = 261$, one gets an obtuse triangle with angles approximately 18.2, 36.4 and 125.4 degrees and a 2-splitting 212\,521-reptiling.


\subsection{Acute triangles do not admit 3-splitting reptilings}
\label{sec:nosplitting}

\begin{observation}\label{obs:alphanodivpi}\label{alphadivphi}
Let $T$ be a scalene, acute triangle with angles $\alpha$, $\delta = k \alpha$, and $\phi = \pi - \alpha - \delta$, for natural $k \geq 2$. Then there exists no natural number $t$ such that $t \alpha = \pi$. In particular, $\phi$ is not a multiple of $\alpha$.
\end{observation}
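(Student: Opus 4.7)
The plan is to pin down $\alpha$ by using the acuteness condition on the two larger angles $\delta = k\alpha$ and $\phi = \pi - (k+1)\alpha$, then to show that the only candidate value of $t$ allowed by those bounds forces $\delta = \phi$, violating the scalene hypothesis.

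First I would extract the numerical range for $\alpha$. Since $T$ is acute, we have $\delta = k\alpha < \pi/2$, so $\alpha < \pi/(2k)$, and $\phi = \pi - (k+1)\alpha < \pi/2$, so $\alpha > \pi/(2(k+1))$. Combining,
\[
\frac{\pi}{2(k+1)} < \alpha < \frac{\pi}{2k}.
\]
Now suppose, for contradiction, that there is a natural number $t$ with $t\alpha = \pi$. Substituting $\alpha = \pi/t$ into the displayed inequality and cancelling $\pi$ gives $2k < t < 2k+2$, so the only possibility is $t = 2k+1$.

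The key observation is then that $t = 2k+1$ is incompatible with scaleneness. With $\alpha = \pi/(2k+1)$ we compute $\delta = k\pi/(2k+1)$ and
\[
\phi = \pi - (k+1)\alpha = \pi - \frac{(k+1)\pi}{2k+1} = \frac{k\pi}{2k+1} = \delta,
\]
so $T$ would be isosceles, contradicting the hypothesis that $T$ is scalene. Hence no such $t$ exists.

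The ``in particular'' clause is immediate: if $\phi = s\alpha$ for some natural $s$, then $\pi = \alpha + \delta + \phi = (1 + k + s)\alpha$, so $t = 1 + k + s$ would be a natural number with $t\alpha = \pi$, contradicting what we just proved. There is no serious obstacle here; the only thing to be careful about is writing out the acuteness bounds correctly and verifying that $t = 2k+1$ is indeed the unique integer in $(2k, 2k+2)$.
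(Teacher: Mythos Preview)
Your proof is correct and uses essentially the same argument as the paper: the acuteness constraint $\pi/(2(k+1)) < \alpha < \pi/(2k)$ is equivalent to the paper's inequality $0 < |\phi - \delta| < \alpha$ (since $\phi - \delta = \pi - (2k+1)\alpha$), and both conclude by observing that the only way divisibility could hold forces $\phi = \delta$, contradicting scaleneness. The only difference is organizational---you first show $\pi$ is not a multiple of $\alpha$ and deduce the claim about $\phi$, whereas the paper first shows $\phi$ is not a multiple of $\alpha$ and deduces the claim about $\pi$.
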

\begin{proof}
Since $T$ is acute and scalene, we have $0 < |\phi - \delta| < \alpha$, so only $\delta$ is divisible by $\alpha$ and $\phi$ is not. It follows that $\pi = \alpha + \delta + \phi$ is not divisible by $\alpha$.
\end{proof}

\begin{observation}\label{obs:splitone}
If ${\cal T}$ is a $k$-splitting gentiling of a scalene, acute triangle $T$ with angles $\alpha < \beta < \gamma$, then there is a single master vertex $X \in \{B,C\}$ such that the tiles of ${\cal T}$ incident to $X$ are $k$ tiles which are all $\alpha$-adjacent to $X$.
\end{observation}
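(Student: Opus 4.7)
}
The plan is a direct case analysis on which master vertex $X \in \{A,B,C\}$ hosts the $k$-splitting. Since $k \geq 2$ tiles meet at $X$, the corners of these tiles contribute angles $\alpha$, $\beta$, or $\gamma$ that sum to the interior angle $\theta_X$ of $T$ at $X$. So there exist non-negative integers $d_\alpha, d_\beta, d_\gamma$ with $d_\alpha + d_\beta + d_\gamma = k \geq 2$ and $d_\alpha \alpha + d_\beta \beta + d_\gamma \gamma = \theta_X$. The goal is to rule out $X = A$ and, for $X \in \{B,C\}$, to force $d_\beta = d_\gamma = 0$ and $d_\alpha = k$.

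First I would handle $X = A$. Here $\theta_X = \alpha$ is the smallest angle of $T$, so any sum of $k \geq 2$ angles from $\{\alpha,\beta,\gamma\}$ is at least $2\alpha > \alpha$, a contradiction. Hence $X \in \{B,C\}$.

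Next, for $X = B$ (so $\theta_X = \beta$), I would first note $d_\gamma = 0$ since any single $\gamma$-contribution already exceeds $\beta$. The remaining equation is $d_\alpha \alpha + d_\beta \beta = \beta$, and if $d_\beta \geq 1$ then $d_\alpha \alpha = (1-d_\beta)\beta \leq 0$, forcing $d_\alpha = 0$ and $d_\beta = 1$, contradicting $k \geq 2$. Therefore $d_\beta = 0$ and $d_\alpha \alpha = \beta$, which yields $d_\alpha = k$, as required.

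Finally, for $X = C$ (so $\theta_X = \gamma$), the same argument gives $d_\gamma = 0$: a single $\gamma$-adjacency would force $d_\alpha = d_\beta = 0$, hence $k = 1$. Then $d_\alpha \alpha + d_\beta \beta = \gamma$. This is the only step where I actually need the acuteness of $T$: since $\gamma < \pi/2$, the triangle inequality for angles gives $\alpha + \beta = \pi - \gamma > \gamma$. If $d_\beta = 1$, then $d_\alpha \alpha = \gamma - \beta < \alpha$, forcing $d_\alpha = 0$ and $k = 1$, a contradiction. If $d_\beta \geq 2$, then $d_\alpha \alpha + d_\beta \beta \geq 2\beta > \alpha + \beta > \gamma$, again a contradiction. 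Hence $d_\beta = 0$ and $d_\alpha = k$, completing the case and the observation. I do not expect a genuine obstacle here; the only subtle point is that acuteness enters exclusively through the inequality $\alpha + \beta > \gamma$ used to handle $X = C$.
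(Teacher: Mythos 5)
Your proof is correct and uses the same key idea as the paper: the acuteness inequality $\alpha+\beta>\gamma$ (equivalently $\beta>\gamma-\alpha$) forces any split of a large master angle to consist only of $\alpha$-angles, since including a $\beta$ or $\gamma$ would leave a remainder smaller than $\alpha$ (or nothing to split at all). The paper states this in one line; your version just makes the case analysis over $X\in\{A,B,C\}$ and the counts $d_\alpha,d_\beta,d_\gamma$ explicit.
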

\begin{proof}
Because $T$ is acute, we have $\beta > \gamma - \alpha$, and therefore we cannot split a large ($\beta$ or $\gamma$) angle into a combination of angles that includes another large angle. Hence the master vertex with angle $\beta$ or $\gamma$ must be split into $k$ small angles.
\end{proof}

\begin{lemma}\label{lem:alphadiv2pi}
If $T$ is a scalene, acute triangle that admits a $k$-splitting gentiling ${\cal T}$ with $k \geq 3$, then there is an odd number $m$ such that $\alpha = 2\pi/m$, where $\alpha$ is the smallest angle of $T$.
\end{lemma}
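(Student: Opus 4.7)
The plan is to classify the vertices of $\mathcal{T}$ by their local angle structure and then close the argument with a global counting identity. First I would invoke Observation~\ref{obs:splitone} to fix a master vertex $X$ at which $k$ tiles meet all $\alpha$-adjacently; hence the angle $\theta$ of $T$ at $X$ equals $k\alpha$, and the third angle $\phi = \pi - (k+1)\alpha$ of $T$ lies in $\{\beta,\gamma\}\setminus\{\theta\}$. A short computation using that $T$ is acute and scalene with $k \geq 3$ shows $\phi > \pi/3$ in either of the possible placements $\phi = \beta$ or $\phi = \gamma$.

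Next I would classify every vertex by the triple $(i,j,l)$ counting its $\alpha$-, $k\alpha$- and $\phi$-adjacencies. At $A$ (angle $\alpha$) a size argument forces $(1,0,0)$, and at the remaining large master $Y$ the equation $i\alpha + jk\alpha + l\phi = \phi$ combined with Observation~\ref{obs:alphanodivpi} forces $(0,0,1)$. At a non-master half vertex the angles sum to $\pi$, giving $(i+jk-l(k+1))\alpha = (1-l)\pi$; since $\phi > \pi/3$ forces $l \leq 2$, and Observation~\ref{obs:alphanodivpi} rules out $l = 0$ and $l = 2$ (each would yield $\alpha = \pi/n$), we get $l = 1$ and $(i,j) \in \{(k+1,0),(1,1)\}$.

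At a full vertex the angles sum to $2\pi$, so $(i+jk-l(k+1))\alpha = (2-l)\pi$; the same observation makes $l$ even, and $\phi > \pi/3$ caps $l$ at $4$, so $l \in \{0,2,4\}$. If $l = 0$, I would deduce $\alpha = 2\pi/(i+jk)$ and note that $i+jk$ must be odd, lest $\alpha = \pi/\tfrac{i+jk}{2}$; in the $l = 4$ case the analogous identity $\alpha = 2\pi/(4(k+1)-(i+jk))$ again has odd denominator. In either case $\alpha = 2\pi/m$ with $m$ odd, proving the lemma. If instead $l = 2$ the equation collapses to $i+jk = 2(k+1)$ with $(i,j) \in \{(2k+2,0),(k+2,1),(2,2)\}$ and places no constraint on $\alpha$.

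It therefore suffices to rule out the possibility that every full vertex has $l = 2$. Writing $h_1, h_2$ for the number of non-master half vertices of the two types and $f_1, f_2, f_3$ for the number of $l = 2$ full vertices of the three types above, I would tally the total numbers of $\alpha$- and $\theta$-angles in $\mathcal T$ (each equal to $r$) across master, half, and full vertices, then subtract. Using Lemma~\ref{lem:euler}, the difference simplifies to $(k+1)(1 + h_1 + 2f_1 + f_2) = 0$, which is impossible since every term is nonnegative and the constant $1$ is strictly positive. The main obstacle is this bookkeeping step: one has to keep careful track of how many $\alpha$-, $\theta$-, and $\phi$-adjacencies come from each vertex type, and verify that the collapse happens uniformly in both cases $\theta = \beta$ and $\theta = \gamma$.
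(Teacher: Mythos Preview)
Your classification of full vertices has a gap at $l=5$. The bound $\phi>\pi/3$ only gives $l\phi\le 2\pi\Rightarrow l<6$, not $l\le 4$; and Observation~\ref{obs:alphanodivpi} rules out odd $l$ only when $|2-l|=1$ (so $l\in\{1,3\}$), because then the equation $(i+jk-l(k+1))\alpha=(2-l)\pi$ forces $|c|\alpha=\pi$. For $l=5$ the equation becomes $(5(k+1)-i-jk)\alpha=3\pi$, which contradicts $\alpha\nmid\pi$ only when $3$ divides the left-hand coefficient. Concretely, take $k=3$ with $\theta=\gamma=3\alpha$ and $\phi=\beta$: then $\alpha\in(\pi/7,\pi/6)$ and $\phi\in(\pi/3,3\pi/7)$, so $5\phi\le 2\pi$ is possible; a full vertex with $(i,j,l)=(0,0,5)$ yields $\alpha=3\pi/20$, and $(1,0,5)$ yields $\alpha=3\pi/19$, both acute scalene and neither of the form $2\pi/m$. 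Since $l=5$ forces $j=0$ (as $\theta>\pi/3$), such vertices contribute no $\theta$-angles and your $\alpha$-versus-$\theta$ subtraction no longer collapses to a positive multiple of $(k+1)$; the counting argument as written does not exclude this case.

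The paper's proof sidesteps this entirely by focusing on the angle $\delta=k\alpha$ rather than on $\phi$. Lemma~\ref{lem:euler} forces some full vertex with $\deg_\delta\ge 3$ (after ruling out the half-vertex option), and at that vertex one argues locally: $3\delta+2\phi>2\pi$ kills $\deg_\phi\ge 2$, while $\deg_\phi=1$ gives $\alpha\mid\pi$; so $\deg_\phi=0$ there and $2\pi$ is an integer combination of $\alpha$ and $k\alpha$, hence $\alpha=2\pi/m$ with $m$ odd. This avoids any global classification. Your approach can be repaired by the same pigeonhole on $\theta$-angles: it immediately produces a full vertex with $j\ge 3$, and in your own $l=2$ list the maximal $j$ is $2$, so that vertex cannot have $l=2$; since $j\ge 1$ already excludes $l=5$, you are left with $l\in\{0,4\}$ and are done without the bookkeeping.
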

\begin{proof}
By Observation~\ref{obs:splitone}, there is a $\delta \in \{\beta, \gamma\}$ such that $\delta = k\alpha$. Let $\phi = \pi - \alpha - \delta$ be the remaining angle of $T$.

By Lemma~\ref{lem:euler}, the $\delta$ angles of the tiles are exactly accounted for if each full vertex of $G_{\cal T}$ fits two, each half vertex fits one, and there is one at a master vertex. Thus, if we split the master vertex with angle $\delta$ into $k$ angles of size $\alpha$, we need a half vertex with more than one $\delta$ angle and/or a full vertex with more than two $\delta$ angles to accommodate the $\delta$ angles of all tiles.

A half vertex with more than one $\delta$ angle cannot exist, since a half vertex with only $\delta$ and $\alpha$ angles cannot exist by Observation~\ref{obs:alphanodivpi}, and a half vertex with at least two $\delta$ angles and a $\phi$ angle cannot exist due to $2\delta + \phi > \alpha + \delta + \phi = \pi$.

Now suppose $v$ is a full vertex with at least three $\delta$ angles. We cannot have $\deg_\phi(v) \geq 2$, since $3\delta + 2\phi = 2\delta + 2\phi + k\alpha > 2\pi$. We cannot have $\deg_\phi(v) = 1$ either, since this would imply that {$\deg_\delta(v)\delta + \phi = 2\pi \pmod \alpha$} and therefore $\pi = 2\pi - (\alpha + \delta + \phi) = (\deg_\delta(v)-1)\delta - \alpha = 0 \pmod \alpha$, contradicting Observation~\ref{obs:alphanodivpi}. Therefore we must have $\deg_\phi(v) = 0$, that is, $v$ must have only $\delta$ and $\alpha$ angles, and hence $2\pi$ must be divisible by $\alpha$. Because $\pi$ is not divisible by $\alpha$, the divisor $m$ in $\alpha = 2\pi/m$ must be odd.
\end{proof}

\begin{theorem}\label{thm:splittingangles}
No acute triangle $T$ admits a $k$-splitting gentiling for $k \geq 4$. If $T$ is an acute triangle and admits a 3-splitting gentiling, then the angles of $T$ are either $(\frac2{13}\pi, \frac5{13}\pi, \frac6{13}\pi)$ or $(\frac2{15}\pi, \frac6{15}\pi, \frac7{15}\pi)$.
\end{theorem}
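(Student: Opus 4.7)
The plan is to combine Lemma~\ref{lem:alphadiv2pi} with a global angle-counting argument based on Lemma~\ref{lem:euler}, handling the scalene, isosceles, and equilateral cases separately.

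In the scalene acute case, Lemma~\ref{lem:alphadiv2pi} gives $\alpha = 2\pi/m$ with $m$ odd, and acuteness of $\delta = k\alpha$ and $\phi = \pi - (k+1)\alpha$ sandwiches $m$ as $4k < m < 4k+4$; the only odd choices are $m \in \{4k+1, 4k+3\}$. For $k = 3$ this immediately yields exactly the two triangles in the statement, so the remaining task is to rule out $k \geq 4$.

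For $k \geq 4$, I would enumerate the angle multisets $(d_\alpha, d_\delta, d_\phi)$ compatible with the angle sum at a full or half vertex. Since $m - 2k - 2$ is odd, $d_\phi$ must be even, and the surviving full-vertex types are $(m-kj, j, 0)$ for $j \in \{0,\ldots,4\}$, $(2k+2-kj, j, 2)$ for $j \in \{0,1,2\}$ and the single type $(4k+4-m, 0, 4)$, while the half-vertex types reduce to $(k+1,0,1)$ and $(1,1,1)$. The crucial point is that the two ``rescue'' configurations $(0,1,4)$ (present only when $m = 13$) and $(0,5,0)$ (only when $m = 15$)---the only full-vertex types delivering negative $\alpha$-deviation consistent with zero $\phi$-deviation---require $d_\alpha < 0$ and therefore disappear once $k \geq 4$. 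I then use Lemma~\ref{lem:euler} to equate the totals of $\alpha$-, $\delta$-, and $\phi$-angles each to $r = 2f + h + 1$. Measured from the baseline $(2,2,2)$/$(1,1,1)$, the split at the $\delta$-master contributes a master-deviation $(+k, -1, 0)$; imposing zero net $\phi$-deviation pins the count of $d_\phi = 4$ full vertices equal to the total count of $d_\phi = 0$ full vertices, and substituting this into the $\alpha$-balance collapses it to
\[
k\bigl[4 N_0 + 3 N_1 + 2 N_2 + N_3 + 2 M_0 + M_1\bigr] = -k(1 + h_a),
\]
where $N_j$ (resp.\ $M_j$) counts full vertices with $d_\phi = 0$ (resp.\ $d_\phi = 2$) and $d_\delta = j$, and $h_a$ counts $(k+1,0,1)$-halves. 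The left side is non-negative and the right side strictly negative, a contradiction.

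For the isosceles acute case with $\alpha < \delta$, the split must be at a $\delta$-master with $\delta = k\alpha$, forcing $\alpha = \pi/(2k+1)$. The full-vertex equation $d_\alpha + k d_\delta = 2(2k+1)$ combined with $2\pi/\delta = 4 + 2/k < 5$ gives $d_\delta \leq 4$ for $k \geq 3$, and $d_\delta \leq 2$ at half vertices analogously; the baselines $(2,4)$ and $(1,2)$ realise these maxima, so every non-baseline configuration has $\Delta d_\alpha \geq 0$ and cannot cancel the $-k$ surplus of the split. The symmetric case $\alpha > \delta$ forces $\alpha = k\pi/(k+2) \geq \pi/2$ for $k \geq 2$, violating acuteness, and the equilateral case is immediate since $k\alpha = \alpha$ requires $k = 1$. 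The hardest step is the $k \geq 4$ enumeration together with the recognition that $(0,1,4)$ and $(0,5,0)$ are the precise ``rescue'' configurations that vanish as $k$ grows; the $\phi$-balance substitution that converts the $\alpha$-balance into a manifestly non-negative identity is the algebraic heart of the contradiction.
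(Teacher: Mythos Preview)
Your argument is correct, but the route you take for ruling out $k \geq 4$ is substantially more laborious than the paper's. The paper's shortcut is to lump the two large angles $\beta$ and $\gamma$ together rather than tracking $\delta$ and $\phi$ separately: splitting a large master angle removes one large-angle contribution at the masters, and since for $k \geq 3$ acuteness forces $\gamma \geq \beta > (1-1/k)\pi/2 > \pi/3$, no half vertex can hold more than two large angles. The Euler count (Lemma~\ref{lem:euler}) then forces some \emph{full} vertex to carry at least five large angles, yielding the single inequality $5\beta \leq 2\pi$. Substituting $\beta = (2k-1)\pi/(4k+1)$ or $\beta = 2k\pi/(4k+3)$ in the scalene case, or $\beta = k\pi/(2k+1)$ in the isosceles case, immediately gives $k \leq 3$ (and in fact $k \leq 2$ in the isosceles case), handling everything uniformly in a few lines.

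Your approach instead enumerates every admissible vertex configuration for each of the three angle types, uses the $\phi$-balance to pin the $d_\phi = 4$ count to the total $d_\phi = 0$ count, and then collapses the $\alpha$-balance into a sign contradiction. The identity you derive is exactly right and the enumeration is complete for $k \geq 4$, so the proof is valid---but the paper never needs to distinguish $\delta$ from $\phi$ or list vertex types at all. On the other hand, your enumeration makes fully explicit that the ``rescue'' configurations $(0,5,0)$ and $(0,1,4)$, appearing only at $m = 15$ and $m = 13$ respectively, are precisely what allow $k = 3$ to survive; the paper mentions these configurations only informally after the proof. One minor wording slip: in your isosceles paragraph you write ``$-k$ surplus'' where you mean the $+k$ surplus in the $\alpha$-count (equivalently the $-1$ deficit in $\delta$); the logic is unaffected.
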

\begin{proof}
Assume $T$ is an acute triangle that admits a $k$-splitting gentiling ${\cal T}$ with $k \geq 3$. Let $\alpha \leq \beta \leq \gamma$ be the angles of $T$. Note that $\alpha < \beta$, otherwise we would have $(\alpha, \beta, \gamma) = (\alpha, \alpha, k\alpha)$ and $\gamma = k\pi/(k+2) > \pi/2$ and $T$ would not be acute.

By Lemma~\ref{lem:euler} we have $2r = 4f + 2h + 2$. This means that the large ($\beta$ and $\gamma$) angles of the tiles in a tiling ${\cal T}$ of $T$ are exactly accounted for if each full vertex of $G_{\cal T}$ fits four, each half vertex fits two, and there are two at the master vertices. Thus, if we replace a large angle at a master vertex by $k$ small ($\alpha$) angles, we need at least one full vertex with more than four large angles or at least one half vertex with more than two large angles to accommodate the large angles of all tiles.

Observe $\gamma \geq \beta = \pi - \gamma - \alpha > \pi/2 - \alpha > (1-1/k) \pi/2$, due to the fact that $T$ is acute. Hence, for $k \geq 3$, we have $\gamma \geq \beta > \frac13\pi$, and no half vertex can have more than two large angles. It follows that there must be a full vertex $v$ with at least five large angles, and therefore, $5\beta \leq 2\pi$.

If $T$ is isosceles, that is, $\gamma = \beta = k\alpha$, then $5\beta \leq 2\pi$ can be rewritten as $5k\pi/(2k+1) \leq 2\pi$, which solves to $k \leq 2$.

If $T$ is not isosceles, we get from Lemma~\ref{lem:alphadiv2pi} that the angles of $T$ are $\alpha = 2\pi/m$, $\delta = 2k\pi/m$, and $\phi = \pi - \alpha - \delta$, for some odd integer $m$. Since $T$ is acute and scalene, we have $0 < |\phi - \delta| < \alpha$. With $\phi - \delta = \pi - \alpha - 2\delta = (m - 2 - 4k)\frac\pi m$ and $\alpha = 2\frac\pi m$ we now get $0 < |m - 2 - 4k| < 2$ for some odd integer $m$, hence $|m - 2 - 4k| = 1$ and $|\phi - \delta| = \pi/m = \alpha/2$. Thus, we either have:
\begin{itemize}
\item $m = 2 + 2k + (2k - 1) = 4k + 1$, where $\beta = \phi = (2k-1)\pi/(4k+1)$ or
\item $m = 2 + 2k + (2k + 1) = 4k + 3$, where $\beta = \delta = 2k\pi/(4k+3)$.
\end{itemize}
In both cases, solving $5\beta \leq 2\pi$ yields $k \leq 3$, and we find that $m$ is either 13 or 15, resulting in the two possible sets of angles as stated in the lemma.
\end{proof}

Indeed, 3-splitting gentilings with the aforementioned angles might be realizable: at least they cannot be ruled out based on pigeon-hole arguments on the angles alone. With angles $(\frac2{13}\pi, \frac5{13}\pi, \frac6{13}\pi)$, one can account for the $\alpha$, $\beta$ and $\gamma$ angles of all tiles by including a full vertex with $(\deg_\alpha, \deg_\beta, \deg_\gamma) = (1,0,4)$ and a full vertex with $(\deg_\alpha, \deg_\beta, \deg_\gamma) = (0,4,1)$. With angles $(\frac2{15}\pi, \frac6{15}\pi, \frac7{15}\pi)$, one can account for the $\alpha$, $\beta$ and $\gamma$ angles of all tiles by including a full vertex with $(\deg_\alpha, \deg_\beta, \deg_\gamma) = (1,0,4)$ and a full vertex with $(\deg_\alpha, \deg_\beta, \deg_\gamma) = (0,5,0)$.

\begin{theorem}\label{thm:no3splittingreptiling}
No acute triangle $T$ admits a $k$-splitting reptiling ${\cal T}$ for $k \geq 3$.
\end{theorem}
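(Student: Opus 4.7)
The plan is to combine Theorem~\ref{thm:splittingangles} with Theorem~\ref{main1}. Theorem~\ref{thm:splittingangles} restricts any hypothetical $k$-splitting gentiling of an acute triangle with $k \geq 3$ to the case $k=3$ with angles $(\tfrac{2}{13}\pi, \tfrac{5}{13}\pi, \tfrac{6}{13}\pi)$ or $(\tfrac{2}{15}\pi, \tfrac{6}{15}\pi, \tfrac{7}{15}\pi)$, so it suffices to exclude $3$-splitting reptilings for these two specific triangles.

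First I would observe that in the trivial $n^2$-reptiling of any triangle each master vertex of $T$ coincides with a corner of exactly one tile, so the trivial reptiling is never $k$-splitting for $k \geq 2$. A hypothetical $3$-splitting reptiling is therefore non-trivial, and by Theorem~\ref{main1} the underlying acute triangle must be rational, i.e., must have a pair of sides whose length ratio is a rational number different from $1$. By the law of sines the three side lengths are proportional to the sines of the opposite angles, so the task reduces to showing that for each of the two candidate angle triples no ratio $\sin(a\pi/n)/\sin(b\pi/n)$ between the sines of distinct angles is rational.

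To achieve this I would write $\sin(k\pi/n) = (\omega^k - \omega^{-k})/(2i)$ for a suitable primitive root of unity $\omega$: take $\omega = \zeta_{26}$ in the first case and $\omega = \zeta_{30}$ in the second. A hypothetical identity $q\sin(a\pi/n) = p\sin(b\pi/n)$ with non-zero integers $p,q$ then becomes a $\mathbb{Q}$-linear relation among four roots of unity, to which I would apply the linear independence of $\{1,\omega,\ldots,\omega^{\phi(2n)-1}\}$ over $\mathbb{Q}$. For the first triangle, where $\phi(26) = 12$, the relevant exponents, after using $\omega^{13} = -1$ to replace $\omega^{-k}$ by $-\omega^{13-k}$, all lie in $\{0,\ldots,11\}$, so independence immediately forces $p = q = 0$, the desired contradiction. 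The main obstacle is the second triangle: $\phi(30) = 8$, so exponents such as $8$, $9$ and $13$ fall outside the standard basis $\{1,\omega,\ldots,\omega^7\}$ and must first be reduced modulo $\Phi_{30}(\omega) = 0$ before independence can be applied. This produces a small finite calculation for each of the three pairs of sides; carrying it out yields $p = q = 0$ in every case, completing the proof.
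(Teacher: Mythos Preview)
Your proposal is correct and follows the same overall structure as the paper's proof: reduce via Theorem~\ref{thm:splittingangles} to the two specific angle triples, observe that a $3$-splitting reptiling is necessarily non-trivial, and then invoke Theorem~\ref{main1} by showing that both candidate triangles are irrational. The difference lies only in how the irrationality of the side-length ratios is established. The paper uses trigonometric identities (chiefly $\sin(3\omega)/\sin(\omega)=2\cos(2\omega)+1$) to express each ratio in the form $2\cos\theta\pm 1$ or a simple algebraic function of such a cosine, with $\theta$ a rational multiple of $\pi$, and then applies Niven's theorem directly. Your route through linear independence of powers of a primitive root of unity over $\mathbb{Q}$ is equally valid and is in fact the natural algebraic generalisation of Niven's theorem; for $n=13$ it is essentially free, as you note, while for $n=15$ it requires reducing several high powers modulo $\Phi_{30}$. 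That computation is finite and unproblematic, but the paper's trick of collapsing each ratio to a single cosine makes the $n=15$ case no harder than the $n=13$ case and avoids any explicit work with the cyclotomic polynomial.
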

\begin{proof}
In this proof, we will use the trigonometric identities $\sin(3\omega) = 3\sin(\omega) - 4\sin^3(\omega)$; $\sin^2(\omega) = \frac12 - \frac12\cos(2\omega)$; and $\sin(3\omega)/\sin(\omega) = 3-4\sin^2(\omega) = 2\cos(2\omega) + 1$.

By Theorem~\ref{thm:splittingangles}, if $k \geq 3$, the angles of $T$ would have to be either $(\frac2{13}\pi, \frac5{13}\pi, \frac6{13}\pi)$ or $(\frac2{15}\pi, \frac6{15}\pi, \frac7{15}\pi)$.
In the first case, the ratios of the side lengths are:\[\def\arraystretch{1.5}\begin{array}{lll}
a/b & = \sin(\frac2{13}\pi)/\sin(\frac5{13}\pi) = -\sin(\frac{15}{13}\pi)/\sin(\frac5{13}\pi) = -2\cos(\frac{10}{13}\pi) - 1 & = 2\cos(\frac3{13}\pi) - 1; \\
b/c & = \sin(\frac5{13}\pi)/\sin(\frac6{13}\pi) = -\sin(\frac{18}{13}\pi)/\sin(\frac6{13}\pi) = -2\cos(\frac{12}{13}\pi) - 1 & = 2\cos(\frac1{13}\pi) - 1; \\
c/a & = \sin(\frac6{13}\pi)/\sin(\frac2{13}\pi) & = \rlap{$\displaystyle{2\cos(\frac4{13}\pi) + 1. }$}\hphantom{\hbox{$\displaystyle{\frac12\sqrt{6/(1 - \cos(\frac4{15}\pi)) - 3} + \frac12. }$}}\\
\end{array}\]
In the second case, the ratios of the side lengths are:\[\def\arraystretch{1.5}\begin{array}{lll}
b/a & = \sin(\frac6{15}\pi)/\sin(\frac2{15}\pi) & = 2\cos(\frac4{15}\pi) + 1; \\
b/c & = \sin(\frac6{15}\pi)/\sin(\frac7{15}\pi) = -\sin(\frac{24}{15}\pi)/\sin(\frac8{15}\pi) =     -2\cos(\frac{16}{15}\pi) - 1 & = 2\cos(\frac1{15}\pi) - 1; \\
c/a & = \sin(\frac7{15}\pi)/\sin(\frac2{15}\pi) = \\ & \hphantom{=} \left(\sin(\frac13\pi)\cos(\frac2{15}\pi) + \cos(\frac13\pi)\sin(\frac2{15}\pi)\right)/\sin(\frac2{15}\pi) = \\ & \hphantom{=} \frac12\sqrt{3\cos^2(\frac2{15}\pi)/\sin^2(\frac2{15}\pi)} + \frac12 =
    \frac12\sqrt{3/\sin^2(\frac2{15}\pi) - 3} + \frac12 & = 
    \frac12\sqrt{6/(1 - \cos(\frac4{15}\pi)) - 3} + \frac12. \\
\end{array}\]
All of these side length ratios are irrational because the cosines on the righthand sides of the equations are irrational by Niven's theorem. Therefore, by Theorem~\ref{main1}, $\mathcal T$ can only be the trivial tiling, and a 3-splitting reptiling cannot exist.
\end{proof}

\begin{theorem}\label{thm:noisoscelesksplittingreptiling}
No oblique isosceles triangle $T$ admits a $k$-splitting reptiling ${\cal T}$ for $k \geq 2$.
\end{theorem}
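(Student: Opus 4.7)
The plan is to argue by contradiction. Suppose some oblique isosceles triangle $T$, with base angle $\lambda$ and apex angle $\alpha=\pi-2\lambda$, admits a $k$-splitting reptiling $\mathcal T$ for some $k\geq 2$. I will first extract a tight arithmetic constraint on $\lambda$ from the local geometry at the split vertex, and then combine it with Niven's theorem to show that $T$ must be irrational in the sense of Section~\ref{sec:irrationalistrivial}. At that point Theorem~\ref{thm:onlytrivial}(ii) forces every reptiling of $T$ to be trivial, which contradicts the presumed presence of a master vertex incident to $k\geq 2$ tiles.

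For the angle step, I look at the $k$ tile corners meeting at the split vertex $v$ of $T$; each contributes an angle from $\{\alpha,\lambda\}$ and they sum to the angle of $T$ at $v$. Because $2\alpha>\alpha$ and $2\lambda>\lambda$, no summand can equal the angle at $v$, so a two-line case analysis leaves only two possibilities: either $v$ is the apex and $\alpha=k\lambda$, forcing $\lambda=\pi/(k+2)$; or $v$ is a base vertex and $\lambda=k\alpha$, forcing $\lambda=k\pi/(2k+1)$. In either case $\lambda$ is a rational multiple of $\pi$, and running through $k\geq 2$ verifies $\lambda\notin\{\pi/3,\pi/2,2\pi/3\}$ (the excluded right-angled case $k=2$ in the apex subcase is already ruled out by ``oblique'').

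For the Niven step, I appeal to Niven's theorem exactly as it is used in the proof of Lemma~\ref{lem:allverticesisosceles}: since $\lambda/\pi\in\mathbb Q$ but $\lambda$ is not one of the angles whose cosine lies in $\{0,\pm\tfrac12,\pm 1\}$, the value $\cos\lambda$ is irrational. The law of sines on an isosceles triangle gives that the ratio of the base to one of the equal sides equals $\sin(\pi-2\lambda)/\sin\lambda=2\cos\lambda$, which is therefore irrational. The only other pair of sides in $T$ has ratio $1$, so $T$ has no pair of sides whose length ratio is a rational number other than $1$, i.e.\ $T$ is irrational.

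Finally, Theorem~\ref{thm:onlytrivial}(ii) applies to $T$ and forces every reptiling, in particular $\mathcal T$, to be trivial; but the trivial reptiling has exactly one tile incident to each master vertex and so is $1$-splitting there, contradicting $k\geq 2$. I do not foresee any serious obstacle: the angle-counting step is mechanical, and the Niven argument parallels one already made in the paper. The only point requiring care is checking that every oblique isosceles candidate surviving the split indeed lands outside the Niven-exceptional set, which is a short direct computation on the two families $\lambda=\pi/(k+2)$ and $\lambda=k\pi/(2k+1)$.
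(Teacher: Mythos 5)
Your proposal is correct and follows essentially the same route as the paper: reduce to $\lambda=\pi/(k+2)$ or $\lambda=k\pi/(2k+1)$ via the angle count at the split master vertex, invoke Niven's theorem to get $\cos\lambda\notin\mathbb{Q}$ (hence irrationality of the side ratio $2\cos\lambda$), and conclude via Theorem~\ref{thm:onlytrivial}(ii) that only the trivial, non-splitting reptiling exists. Your write-up merely makes explicit two steps the paper leaves implicit (the angle-sum case analysis and the law-of-sines link between $\cos\lambda$ and rationality of $T$), which is fine.
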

\begin{proof}
Suppose, for the sake of contradiction, that $T$ is an oblique isosceles triangle admitting a $k$-splitting reptiling ${\cal T}$ for $k \geq 2$. Let $\lambda$ and $\tau$ be the base and top angle of $T$, respectively.
If a $k$-splitting reptiling exists, we must have either $\tau = k \lambda$ or $\lambda = k \tau$.
In the first case, we get $\lambda = \pi/(k+2)$, and in particular $0 < \lambda \leq \pi/4 < \pi/3$.
In the second case, we get $\lambda = k\pi/(2k+1)$, and in particular $\pi/3 < 2\pi/5 \leq \lambda < \pi/2$.
In both cases, by Niven's theorem, we have $\cos(\lambda) \notin \mathbb{Q}$, and therefore, by Theorem~\ref{thm:onlytrivial}, $T$ admits only trivial reptilings and no $k$-splitting reptilings.
\end{proof}

\section{No face-continuous space-filling curves for acute triangles}\label{sec:sfc}
In this section we prove Theorem~\ref{main3}: no face-continuous space-filling curve can be constructed on the basis of reptilings of an acute triangle.

Consider a reptiling ${\mathcal T}$ of an acute triangle $T$. If multiple tiles of ${\mathcal T}$ meet at a master vertex $v$ of $T$, we say $v$ is a \emph{fan} in ${\mathcal T}$. If there is only a single tile touching a master vertex $v$ and the interior edge of that tile (the edge opposite to $v$) is a union of edges of multiple adjacent tiles, then we say $v$ is a \emph{cap} in ${\mathcal T}$. We define the \emph{dual} of ${\mathcal T}$ as the graph that has a node for each tile of ${\mathcal T}$, and contains an edge between two nodes if the corresponding tiles of ${\mathcal T}$ touch each other in more than a single point.

\begin{lemma}\label{lem:hamiltonianpath}
If there is a face-continuous space-filling curve based on a reptiling of $T$, then $T$ must admit a reptiling ${\mathcal T}$ of which the dual contains a Hamiltonian path.
\end{lemma}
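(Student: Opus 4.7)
The plan is to read off a Hamiltonian path directly from the order in which a face-continuous space-filling curve visits the tiles of its top-level reptiling, so essentially no new geometry is needed beyond a careful unpacking of the definitions of ``based on a reptiling'' and of ``face-continuous''.

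Let $f:[0,1]\to T$ be a face-continuous space-filling curve based on a reptiling, and let $\mathcal{T}_1=\{T_1,\ldots,T_r\}$ be its top-level reptiling, indexed in the order in which $f$ traverses the tiles. Let $x_i=\sum_{j=1}^{i}|T_j|/|T|$, so that by the recursive construction of traversal orders $f([x_{i-1},x_i])\subseteq T_i$, and by measure-preservation this image has Lebesgue measure $|T_i|$. The first step is to promote this inclusion to an equality: $f([x_{i-1},x_i])$ is the continuous image of a compact connected set, hence compact and connected; it is contained in $T_i$; and it has the full measure of $T_i$. Since $T_i$ is the closure of its interior, a closed proper subset of $T_i$ of full measure is impossible, so $f([x_{i-1},x_i])=T_i$.

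Now fix $1\le i<r$ and consider the interval $J_i=[x_{i-1},x_{i+1}]$. By the previous step, $f(J_i)=T_i\cup T_{i+1}$. Face-continuity forces the interior of $f(J_i)$ to be connected. If $T_i$ and $T_{i+1}$ met in at most one point (or not at all), then
\[
\mathrm{int}(T_i\cup T_{i+1})=\mathrm{int}(T_i)\cup\mathrm{int}(T_{i+1}),
\]
a disjoint union of two non-empty open convex sets, which is disconnected --- a contradiction. Therefore $T_i$ and $T_{i+1}$ share more than a single point, which by the definition of the dual means they are adjacent in it. The sequence $T_1,T_2,\ldots,T_r$ thus visits every tile of $\mathcal{T}_1$ exactly once along edges of the dual, and is a Hamiltonian path, so $\mathcal{T}:=\mathcal{T}_1$ is the desired reptiling.

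The only non-routine point is the identification $f([x_{i-1},x_i])=T_i$, which requires combining continuity, compactness, and measure-preservation; once this is in hand, the rest of the argument is purely formal and is the main obstacle only in the sense that one must carefully separate the combinatorial definition of the dual (adjacency by sharing more than a point) from the topological statement about the interior of a union of two tiles.
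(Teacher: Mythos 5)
Your proof is correct and takes essentially the same approach as the paper: the paper likewise observes that if two consecutively visited tiles met in at most one point, the curve section traversing exactly that pair would have disconnected interior, contradicting face-continuity, so the visiting order is a Hamiltonian path in the dual. You simply make explicit the steps the paper leaves implicit (the identification $f([x_{i-1},x_i])=T_i$ and the computation of $\mathrm{int}(T_i\cup T_{i+1})$), and the paper's choice to run the argument on a deeper-level refinement rather than the top-level reptiling is immaterial for this lemma.
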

\begin{proof}
Consider any tiling ${\mathcal T}$ of $T$ that results from tiling $T$ with reptilings recursively, to a sufficient recursion depth such that no tile touches more than one master vertex. A space-filling curve based on this tiling must visit the tiles one by one. If there is any pair of consecutive tiles along the curve that do not touch each other in more than a single point, then there would be a section of the space-filling curve (namely the section traversing exactly this pair of tiles) with a disconnected interior. Hence the order in which the space-filling curve visits the tiles must be such that the corresponding sequence of vertices in the dual describes a Hamiltonian path.
\end{proof}

\begin{lemma}\label{lem:needfansandcaps}
If there is a face-continuous space-filling curve based on a reptiling of $T$, then $T$ must admit a tiling ${\mathcal T}$ in which at least one master vertex is a cap or a fan.
\end{lemma}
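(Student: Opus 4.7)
My plan is to deduce this directly from Lemma~\ref{lem:hamiltonianpath} by a degree-counting argument on the dual graph. First I would fix a tiling ${\mathcal T}$ as supplied by that lemma: one obtained by recursively applying reptilings to $T$ down to a depth at which no tile touches more than one master vertex, and whose dual carries a Hamiltonian path.

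Next, arguing by contradiction, I would assume that \emph{no} master vertex of $T$ is a cap or a fan in ${\mathcal T}$, and for each $v \in \{A, B, C\}$ introduce the tile $X_v$ of ${\mathcal T}$ incident to $v$. Under the assumption, $X_v$ is unique (since $v$ is not a fan), and by the choice of recursion depth the tiles $X_A, X_B, X_C$ are pairwise distinct. The key geometric observation is then that two of the three edges of $X_v$ lie along the two sides of $T$ meeting at $v$; along each such edge the halfplane opposite $X_v$ is outside $T$, so no other tile of ${\mathcal T}$ can share more than a single point with $X_v$ along those two edges. Hence every neighbor of $X_v$ in the dual must meet $X_v$ along its third, interior edge opposite $v$.

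Because $v$ is also not a cap, this interior edge coincides with a single edge of $G_{\mathcal T}$, so it bounds exactly one other tile. Consequently $X_v$ has degree exactly one in the dual. Applying this to each of the three master vertices yields three distinct degree-one vertices in a graph that admits a Hamiltonian path—but any Hamiltonian path has at most two vertices of degree one, namely its two endpoints. This contradiction forces at least one master vertex to be a cap or a fan in ${\mathcal T}$, which is what the lemma asserts.

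There is no real obstacle here; the only point that needs care is to justify that the two boundary edges of $X_v$ contribute no adjacency in the dual (using the ``share more than a single point'' requirement) and that, at the chosen recursion depth, $X_A, X_B, X_C$ really are three different tiles. Once these bookkeeping items are verified, the rest of the proof reduces to the elementary fact that a graph with three vertices of degree one cannot contain a Hamiltonian path.
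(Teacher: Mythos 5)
Your proof is correct and takes essentially the same approach as the paper: both arguments fix a tiling deep enough that the three corner tiles are distinct, observe that a corner tile at a vertex which is neither a fan nor a cap has degree one in the dual, and conclude from the Hamiltonian path of Lemma~\ref{lem:hamiltonianpath} that at least one of the three must have degree at least two. The only (cosmetic) difference is that you count three degree-one vertices against the two endpoints of the path, whereas the paper points directly at the tile containing the second-visited master vertex as the forced internal vertex.
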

\begin{proof}
Consider any tiling ${\mathcal T}$ of $T$ that results from tiling $T$ with reptilings recursively, to a sufficient recursion depth such that no tile touches more than one master vertex. By Lemma~\ref{lem:hamiltonianpath}, the dual of ${\mathcal T}$ must contain a Hamiltonian path. To be able to construct such a path, we have to be able to get into and out of at least one of the master vertices. More precisely, let $v$ be the vertex of $T$ that is visited second by the space-filling curve. The visit to $v$ must be part of a traversal of a tile $S$ whose corresponding vertex $S'$ in the dual is neither first nor last on the Hamiltonian path. Hence $S'$ must have degree at least two, which is the case exactly if $S$ is one of the tiles that makes $v$ a fan, or if it is the single tile that makes $v$ a cap.
\end{proof}

\begin{lemma}\label{lem:uniquepath}
In a scalene acute triangle, at most one corner is a fan or a cap of any reptiling.
\end{lemma}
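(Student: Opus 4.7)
The plan is to rule out, one by one, each way in which two of the three corners of $T$ could simultaneously be a fan or a cap. Let $T$ have angles $\alpha<\beta<\gamma$ at vertices $A,B,C$ with opposite side lengths $a<b<c$; since in any reptiling every tile is a congruent copy of $T$ scaled by $1/n$, every edge of $G_{\mathcal T}$ has length in $\{a,b,c\}$. First I would observe that $A$ can never be a fan or a cap: a fan would require $\alpha$ to be split into at least two angles from $\{\alpha,\beta,\gamma\}$, each at least $\alpha$, so the sum exceeds $\alpha$; a cap would require the opposite edge (length $a$) of the tile at $A$ to be partitioned into at least two sub-edges of lengths from $\{a,b,c\}$, again with sum exceeding $a$. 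Hence at most one of $B$ and $C$ can be fan/cap, and it remains to rule out the three combinations \emph{both fans}, \emph{both caps}, and \emph{one fan and one cap}.

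If both $B$ and $C$ are fans, the angles available at each are strictly smaller than the corner's own angle, giving $\beta=k\alpha$ with $k\geq 2$ and $\gamma=j\alpha+l\beta=(j+lk)\alpha$ with $j+l\geq 2$. Substituting into $\alpha+\beta+\gamma=\pi$ together with $\gamma<\pi/2$ yields $j+lk<k+1$, while $\gamma>\beta$ gives $j+lk>k$, impossible for integers. If both $B$ and $C$ are caps, the cap at $B$ forces $b=pa$ with $p\geq 2$ (any $b$- or $c$-sub-edge would already exceed $b$), and the cap at $C$ forces $c=p'a+q'b$ with $p'+q'\geq 2$ (no $c$-sub-edge fits). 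Substitution gives $c/a=p'+q'p\in\mathbb{Z}$, which by the triangle inequalities $(p-1)a<c<(p+1)a$ must equal $p$; hence $c=b$, contradicting scalene.

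For the mixed case, I would take $B$ to be a fan and $C$ a cap; the other pairing is handled symmetrically. By Theorem~\ref{main2} the fan has exactly two tiles, so $\beta=2\alpha$, $\gamma=\pi-3\alpha$, and the acute-scalene conditions force $\alpha\in(\pi/6,\pi/5)$. The sine law gives $a:b:c=1:2\cos\alpha:(4\cos^2\alpha-1)$, and substituting into $c=pa+qb$ produces the quadratic $4\cos^2\alpha-2q\cos\alpha-(p+1)=0$, whose positive root must lie in $(\cos(\pi/5),\cos(\pi/6))=((1+\sqrt5)/4,\sqrt3/2)$. I would then run through $q=0,1,2,\ldots$ in turn and observe that in each case the required $p$ is pinned to an interval containing no non-negative integer compatible with $p+q\geq 2$. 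The symmetric pairing ($B$ cap, $C$ fan) reduces, after eliminating $\gamma=\alpha+\beta$ (which forces $\gamma=\pi/2$) and $\gamma=2\beta$ (which forces $\alpha>\pi/4>\beta$ in the acute case), to $\gamma=2\alpha$; combining $b=pa$ with $\alpha\in(\pi/5,\pi/4)$ then yields $\sin^2\alpha=(3-p)/4\in((5-\sqrt5)/8,1/2)$, which excludes every integer $p\geq 2$.

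The hardest part will be this mixed case, where the purely angular constraint from the fan and the additive side-length constraint from the cap must be reconciled through the sine law; the resulting integer search on $(p,q)$ succeeds only because scaleness and acuteness together confine $\alpha$ to a narrow interval, in which the resulting quadratic in $\cos\alpha$ admits no integer-parameter solution. The remaining cases should reduce to straightforward angle-sum and triangle-inequality bookkeeping.
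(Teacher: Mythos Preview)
Your proof is correct, and for the two ``pure'' cases (both corners fans, or both corners caps) your argument is essentially the same as the paper's: both rely on the fact that in an acute scalene triangle $\gamma-\beta<\alpha$ and $c-b<a$, which forces $\beta,\gamma$ (resp.\ $b,c$) to lie strictly between consecutive integer multiples of $\alpha$ (resp.\ $a$).

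Where your approach diverges is the mixed case (one fan, one cap). You split into two sub-cases, invoke Theorem~\ref{main2} to pin the fan angle to $2\alpha$, translate the cap condition through the sine law into a quadratic $4\cos^2\alpha-2q\cos\alpha-(p+1)=0$, and then search over $(p,q)$ inside the narrow window for $\alpha$. This works, but the paper dispatches the mixed case in one line: once a fan exists, the angles are $\alpha,\,2\alpha,\,\pi-3\alpha$ with $\pi/6<\alpha<\pi/4$, and on this range $\sin\alpha,\ \sin 2\alpha,\ \sin(\pi-3\alpha)$ all lie strictly in $(1/2,1)$. Hence every edge is more than half as long as every other edge, so no edge can accommodate two or more sub-edges, and \emph{no} vertex can be a cap. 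This single observation covers both of your sub-cases simultaneously and replaces the quadratic bookkeeping entirely. Your route has the minor advantage of not needing the $\sin$-bound trick, but the paper's argument makes transparent why acuteness is exactly the right hypothesis: it forces both the angle gaps and the edge-length ratios to be ``too close'' for any doubling.
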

\begin{proof}
Since in any triangle, the length difference between the longest two edges is strictly less than the length of the shortest edge, at most one edge of a tile can be exactly as long as two or more edges of other tiles. Therefore, at most one master vertex can be a cap. Similarly, since the largest two angles of any acute triangle differ by strictly less than the smallest angle, $\alpha$, at most one master vertex can be a fan. It remains to show that it cannot be that in one reptiling, one master vertex is a fan, \emph{and} in the same or another reptiling, another master vertex is a cap.

By Theorem~\ref{thm:no3splittingreptiling}, if a fan exists, its angle must be $2\alpha$, where $\pi/6 < \alpha < \pi/4$ and the third master vertex has angle $\pi - 3\alpha$. Now it follows immediately from $\pi/6 < \alpha < \pi/4$, and thus, $1 > \sin(\alpha), \sin(2\alpha), \sin(\pi-3\alpha) > 1/2$, that each edge of a tile is more than half as long as any other edge, and therefore no master vertex can be a cap.
\end{proof}

Let a \emph{two-level reptiling} of a triangle $T$ be an $r^2$-reptiling ${\mathcal T}_2$ that is obtained by first $r$-reptiling a \emph{master} triangle $T$ with \emph{intermediate} tiles, and then $r$-reptiling each intermediate tile with an equal number of \emph{atomic} tiles. The reptilings within the intermediate tiles may differ from one intermediate tile to another, but we require that each of them has the same number of tiles, so all atomic tiles have the same size. Let a \emph{conforming Hamiltonian path} of the dual of ${\mathcal T}_2$ be an ordering of the atomic tiles such that each atomic tile (except the first) touches the previous tile in more than a single point, and the atomic tiles within any intermediate tile are consecutive in the order.

\begin{lemma}\label{lem:noconformingpath}
No acute triangle admits an arbitrarily fine two-level reptiling whose dual admits a conforming Hamiltonian path.
\end{lemma}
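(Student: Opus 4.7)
The plan is to lift the leaf argument underlying Lemma~\ref{lem:needfansandcaps} to the two-level setting and combine it with the rigidity results of Lemma~\ref{lem:uniquepath} and Theorems~\ref{thm:no3splittingreptiling}, \ref{thm:gent2split}, and~\ref{thm:noisoscelesksplittingreptiling}. Suppose, for contradiction, that an acute triangle $T$ admits two-level $r^2$-reptilings $\mathcal{T}_2$ with conforming Hamiltonian paths for arbitrarily large $r$. Write $\mathcal{T}_1$ for the top-level $r$-reptiling, with intermediate tiles $I_1,\ldots,I_r$ in path order, and let $a_{i,1},\ldots,a_{i,r}$ denote the atomic tiles of $I_i$ in sub-path order. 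The conforming condition forces $I_1,\ldots,I_r$ to be a Hamiltonian path in the dual of $\mathcal{T}_1$, and for each $1<i<r$ the atomic tiles of $I_i$ to form a Hamiltonian sub-path with prescribed endpoints $a_{i,1}$ and $a_{i,r}$ in the dual of $I_i$'s reptiling.

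First I would apply the leaf argument at both levels. Any corner $v$ of $T$ that is neither a cap nor a fan in $\mathcal{T}_1$ has a unique incident tile whose two $v$-sides lie on $\partial T$ and whose opposite side is unsplit; thus that tile has dual-degree $1$, i.e., is a leaf. Since a Hamiltonian path has at most two leaves, at least one corner of $T$ must be a cap or a fan in $\mathcal{T}_1$. Applying the same reasoning inside each middle $I_i$, at least one corner of $I_i$ must be a cap or a fan in its own reptiling, and the two non-cap/fan corners of $I_i$ are forced to host the endpoints $a_{i,1}$ and $a_{i,r}$ of the sub-path (since any other atomic tile is forbidden from being a leaf).

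Next I would dispose of the non-scalene cases. If $T$ is equilateral, or isosceles oblique and irrational, then by Theorem~\ref{thm:onlytrivial} every reptiling of $T$ is trivial, and trivial reptilings have neither caps nor fans, immediately contradicting the preceding step. If $T$ is isosceles oblique and rational, Theorem~\ref{thm:noisoscelesksplittingreptiling} rules out fans in any reptiling of $T$, so every cap-or-fan corner in sight is actually a cap; combining the edge-length equation that a cap imposes on the split opposite side with the isosceles symmetry of $T$ and the chain of leaf-corner constraints across the middle $I_i$ can then be shown to give the same contradiction. This reduces the problem to the scalene acute case, where Lemma~\ref{lem:uniquepath} forces \emph{exactly} one cap-or-fan corner per relevant reptiling, so the two leaf corners of each middle $I_i$ are precisely the positions of $a_{i,1}$ and $a_{i,r}$.

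The hard part is combining these local rigidities across the $r-2$ middle intermediate tiles. For each such $I_i$, the atomic tile $a_{i,1}$ must share an entire edge with $a_{i-1,r}$; since the two sides of $a_{i,1}$ meeting its leaf corner lie along the two sides of $I_i$ meeting that corner, the $I_{i-1}$--$I_i$ boundary must contain one whole side of $I_i$ incident to $a_{i,1}$'s leaf corner, and symmetrically for $a_{i,r}$ and $I_{i+1}$. Thus each middle $I_i$ is glued to its predecessor and successor along full sides meeting at angle-restricted corners (at most one of which may be the unique cap/fan corner of $I_i$). The main obstacle is to show, using the angle restrictions of Theorems~\ref{thm:no3splittingreptiling} and~\ref{thm:gent2split} for fans and the analogous edge-length restrictions for caps, that such a rigidly constrained chain cannot sit inside a single $r$-reptiling of $T$ whose top-level structure itself has only one cap/fan corner (Lemma~\ref{lem:uniquepath} again); the Niven-type irrationality arguments behind Theorem~\ref{main1} and the edge-length arithmetic from Section~\ref{sec:cornersplittingtilings} should then force an absolute upper bound on $r$, contradicting the ``arbitrarily fine'' hypothesis.
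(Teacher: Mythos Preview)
Your outline has a genuine gap in what you yourself call the ``hard part.'' The assertion that ``the $I_{i-1}$--$I_i$ boundary must contain one whole side of $I_i$ incident to $a_{i,1}$'s leaf corner'' is not justified: all you know is that the atomic tile $a_{i,1}$ shares a positive-length segment with $a_{i-1,r}$, and that segment is a tiny portion (length of order $1/\sqrt{r}$ times an intermediate edge) of the $I_{i-1}$--$I_i$ interface; nothing forces the entire intermediate side through that corner to lie on that interface. Once this step fails, the ``rigidly constrained chain'' of full-side gluings evaporates, and the appeal to ``Niven-type irrationality arguments \ldots\ should then force an absolute upper bound on $r$'' is not an argument at all---you never say which quantity is bounded or why. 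The isosceles paragraph has the same problem: ``can then be shown to give the same contradiction'' is a promissory note, not a proof.

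The paper's argument avoids the whole chain idea and is much more local. In the scalene case it looks only at the intermediate tile $A$ sitting in the $\alpha$-corner of $T$: two of $A$'s sides lie on $\partial T$, so $A$ has a single interior side, shared entirely with one neighbour $A'$, whose orientation is forced (otherwise two $\gamma$-angles meet and leave an unfillable gap). The path must leave $A$ through the atomic tile in $A$'s $\phi$-corner and enter $A'$ at one of $A'$'s two leaf corners, but those leaf corners sit at the \emph{far ends} of $A'$'s edges from the $\phi$-corner of $A$; choosing $r$ large enough that an atomic edge is shorter than half an intermediate edge makes the required adjacency metrically impossible. The isosceles case is handled by an equally concrete local analysis of the intermediate tile $M$ at the middle master vertex (necessarily a cap) and the forced strip $U_1,\ldots,U_{2k+1}$ of intermediate tiles along its long side, showing the path inevitably gets stuck. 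The key idea you are missing is this size-comparison step (atomic versus intermediate), which replaces your global arithmetic with a one-vertex geometric obstruction.
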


\begin{figure}
\centering
\noindent\includegraphics[scale=0.9]{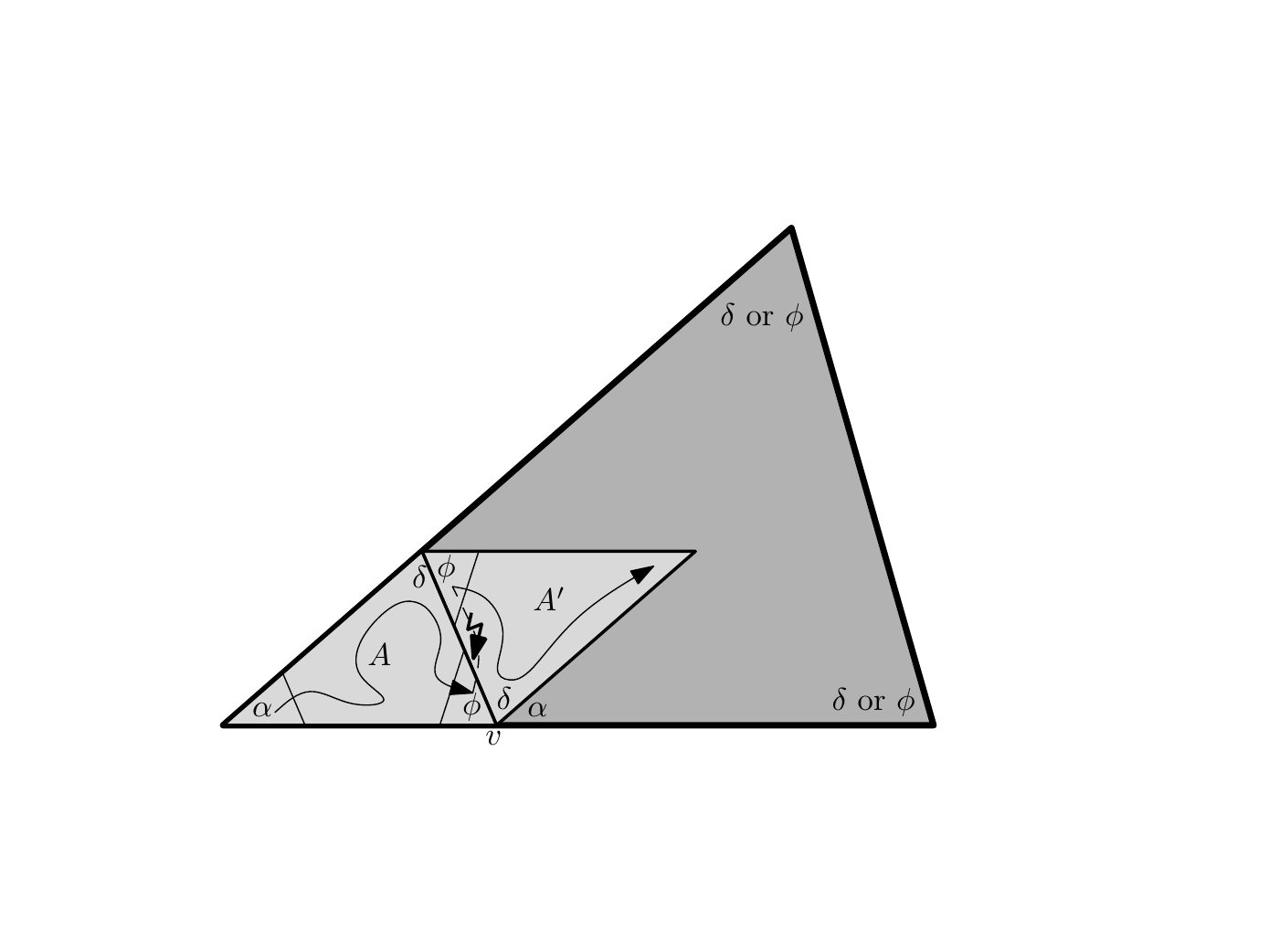}\hfill\includegraphics[scale=0.9]{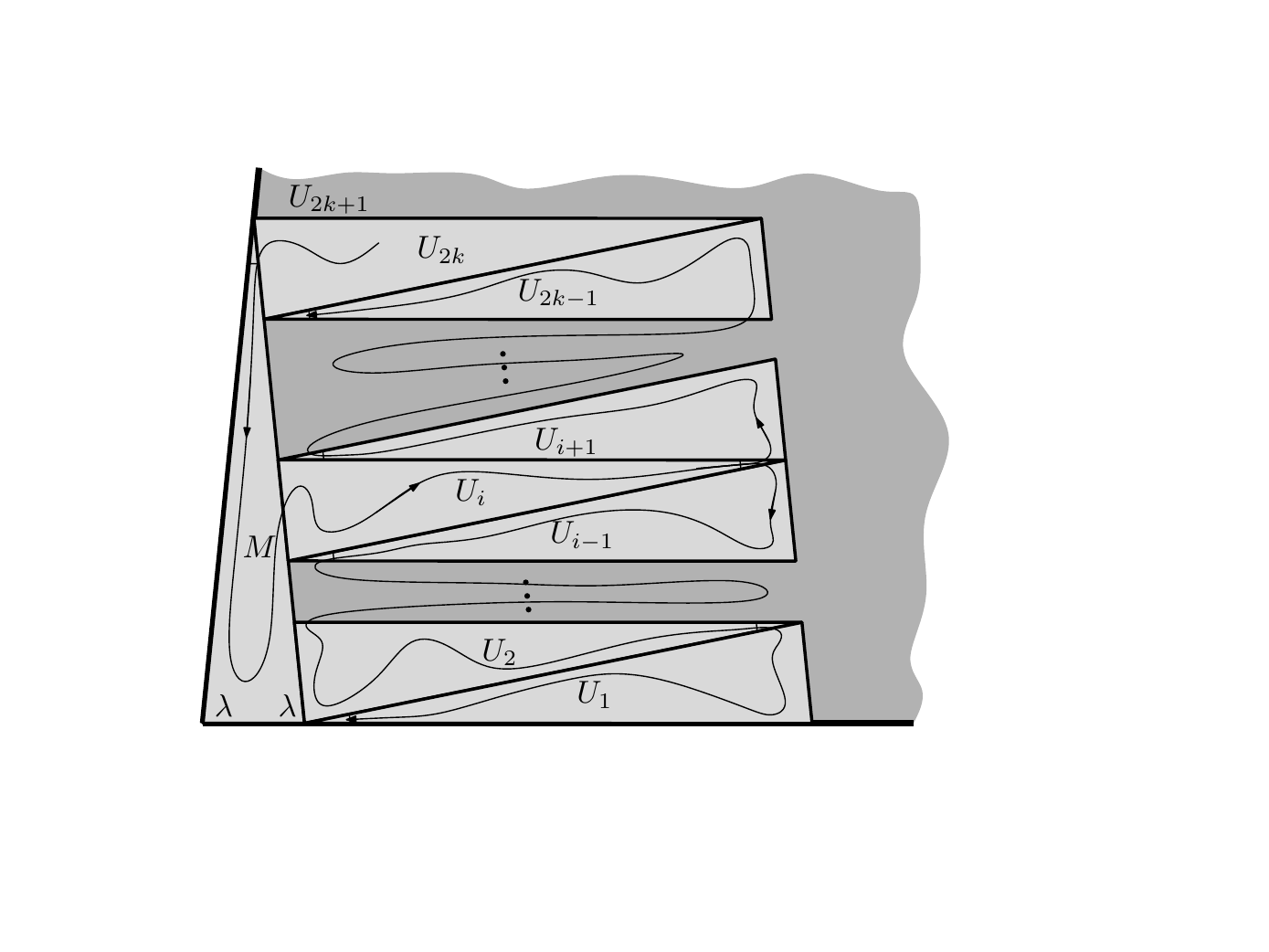}
\caption{Illustration of the proof of Lemma~\ref{lem:noconformingpath} for a scalene triangle (left) and an isosceles triangle (right).}
\label{fig:noconformingpath}
\end{figure}

\begin{proof}
Let $\alpha \leq \beta \leq \gamma$ be the angles of an acute triangle $T$.
Let the $r$-reptilings defining a two-level reptiling ${\mathcal T}_2$ of $T$ be obtained by applying reptilings recursively to a sufficiently deep level of recursion, such that $r > (2\sin\gamma / \sin\alpha)^2$. This ensures that no intermediate tile touches more than one master vertex, and the longest edge of any tile of ${\mathcal T}_2$ is less than half as long as the shortest edge of any intermediate tile.

We distinguish two cases: isosceles triangles, and scalene triangles.

We first discuss scalene triangles. For the sake of contradiction, suppose a conforming Hamiltonian path exists. The construction of our contradiction is illustrated in Figure~\ref{fig:noconformingpath}~(left). By Lemma~\ref{lem:uniquepath}, at most one vertex of a scalene acute triangle is a potential fan or cap. We denote the angle at this vertex by $\delta$, the smallest angle by $\alpha$, and the remaining angle by $\phi$. Thus, in the dual of any (single-level) reptiling, a Hamiltonian path, if one exists, has to start with the tile in the corner with angle $\alpha$ and end with the tile in the corner with angle $\phi$, or vice versa. Now consider the intermediate tile $A$ in the corner with angle $\alpha$. This tile shares its interior edge with a single other intermediate tile $A'$, which can only be placed in one way, namely with its $\phi$ angle adjacent to the $\delta$ angle of $A$ and with its $\delta$ angle adjacent to the $\phi$ angle of $A$ (otherwise the largest angle of $A'$ would meet the largest angle of $A$, and this would leave a gap with an angle smaller than $\alpha$ between $A'$ and the boundary of the master triangle, so the tiling could not be completed). Now the conforming Hamiltonian path through the tiling would have to end its traversal of $A$ with the atomic tile in the corner $v$ of size $\phi$, and then continue into $A'$. However, the only possible starting points for a Hamiltonian path through $A'$ are the tiles in its $\alpha$- and $\phi$-corners, but these tiles lie at the opposite ends of the edges of $A'$ that meet in $v$, and by our choice of $r$, these tiles are too small to be able to touch the tile in the $\phi$-corner of $A$. It follows that a conforming Hamiltonian path is not possible.

For the case of an isosceles triangle, let $\lambda$ and $\tau$ be the base and top angle of the triangle, respectively. By Theorem~\ref{thm:noisoscelesksplittingreptiling}, the triangle does not admit any fans. Caps at the top are not possible either, since the base of an isosceles triangle cannot be at least twice as long as a leg. Caps at the base corners, however, are possible, provided the length of a leg is $k$ times the length of the base for some natural number $k \geq 2$. This implies that the top angle is smaller than the base angle, and henceforth we will speak of small angles/corners (of which each tile has one) and large angles/corners (of which each tile has two).

Assume, for the sake of contradiction, that there exists a conforming Hamiltonian path in a two-level reptiling of an isosceles triangle. The construction of our contradiction is illustrated in Figure~\ref{fig:noconformingpath}~(right). Since there are no fans, for each corner there is a unique intermediate tile touching that corner. Let $M$ be an intermediate tile touching a corner that is neither first nor last on the Hamiltonian path---therefore that corner has to be a cap, and it is one of the larger corners.
Without loss of generality, assume $M$ is oriented such that its short edge (base) is horizontal, its small vertex (top) lies above it, and the master vertex is the left base corner of $M$. Let $U_2,U_4,U_6, \ldots ,U_{2k}$ be the adjacent intermediate tiles whose base (short) edges cover the right edge of $M$, in order from bottom to top (where $1/k = 2 \cos\lambda$), and let $U_1,U_3,U_5, \ldots ,U_{2k+1}$ be the intermediate tiles that have a single corner on the right edge of $M$, in order from bottom to top. Note that given the placement of $M$, the placement of $U_1, \ldots ,U_{2k}$ is fixed---only $U_{2k+1}$ could be a translate of $M$ or a translate of $U_{2k-1}$. 

Observe that any Hamiltonian path in any reptiling of any intermediate tile must either start or end with the tile in the small vertex. Without loss of generality, assume the traversal of $M$ starts at the top (otherwise we could complete the argument based on the reverse of the space-filling curve). That means that the intermediate tile visited immediately before $M$ must be $U_{2k}$. Therefore, immediately after $M$, we must visit one of the tiles $U_i$, for some $i \in \{2,4,6,...,2k-2\}$. Since we enter $U_i$ on the left side, that is, not at its small corner, we must end the traversal at its small corner, on the right. From there, the traversal can continue in either $U_{i-1}$ or $U_{i+1}$. Since we enter $U_{i-1}$ or $U_{i+1}$ at a large angle, the traversal must end at the small angle, on the left, touching $M$. From there we have no choice but to continue in $U_{i-2}$ or $U_{i+2}$, respectively. This inevitably leads to getting stuck in the left corner of either $U_{2k-1}$ (being unable to continue in $U_{2k}$ because it was already visited) or $U_1$ (being unable to continue because at its left corner, it is adjacent only to $U_2$, which is where we came from).
\end{proof}

Theorem~\ref{main3} now follows as a direct corollary of Lemmas \ref{lem:hamiltonianpath} and \ref{lem:noconformingpath}:
it is not possible to construct a face-continuous space-filling curve based on reptilings of an acute triangle.

\section{Topics for further research}\label{sec:furtherresearch}

Our original aim was to prove the existence or non-existence of face-continuous space-filling curves based on gentilings of acute triangles. While we were working on this problem, the questions for non-trivial reptilings and corner-splitting gentilings emerged as puzzles of independent entertainment value, and we have not completely solved them.

The first open question that remains is the following:

\begin{problem}
How can we fully classify all triangles that admit a non-trivial reptiling?
\end{problem}
\begin{facts}
We know that all right triangles admit non-trivial reptilings and we have established that all rational triangles admit non-trivial reptilings (Theorem~\ref{thm:nontrivialconstruction}). Theorem~\ref{thm:onlytrivial} states that irrational acute triangles and irrational oblique isosceles triangles do not admit non-trivial reptilings. This leaves the irrational scalene obtuse triangles undecided.

More precisely, the problem is still open for the irrational triangles with angles $\pi/m$, $k\pi/m$ and $(m-k-1)\pi/m$, for $k, m \in \mathbb{N}$, where $2 \leq k < m/2-1$ and there are non-zero natural numbers $\lambda, \mu, \nu$ such that $\lambda\sin(\pi/m) = \mu\sin(k\pi/m) + \nu\sin((k+1)\pi/m)$. Otherwise class (i) in the non-existence proof for non-trivial reptilings in Section~\ref{sec:irrationalistrivial} would still apply, or Observation~\ref{obs:stillcasei} or Observation~\ref{obs:case3general} applies and the analysis of class (ii) would still go through. In fact, an easy pigeon-hole argument based on Lemma~\ref{lem:euler}, similar to the arguments used in Section~\ref{sec:nosplitting}, shows that Observation~\ref{obs:case3} can also be established for obtuse triangles with an angle larger than $2\pi/3$, since then, no more than two such angles can meet in a point; therefore we may restrict $k$ further by $m/3-1 \leq k < m/2-1$.

Is this class of irrational triangles actually non-empty, and if so, does any such triangle admit a non-trivial reptiling? (If so, Theorem~\ref{thm:withouthalfverticesonlygrid} tells us that the tiling must include interior half vertices.) Or can we prove that none of these triangles admits a non-trivial reptiling, and hence, the right triangles and the rational triangles are the only triangles admitting non-trivial reptilings?
\end{facts}

\begin{table}
\centering\noindent
\begin{tabular}{llcc}
$k$ & conditions & reptilings & gentilings \\
\hline
2 & $\pi/6 < \alpha < \pi/4$; $\cos\alpha \in \mathbb{Q}$ & \textsc{exist} (smallest?) & \textsc{exist} \\
2 & $\pi/6 < \alpha < \pi/4$; $\cos\alpha \notin \mathbb{Q}$; $\cos^2\alpha \in \mathbb{Q}$ & ? & \textsc{exist} \\
2 & $\pi/6 < \alpha < \pi/4$; $\cos^2\alpha \notin \mathbb{Q}$; $2\cos\alpha - 1/(2\cos\alpha) \in \mathbb{Q}$ & ? & ? \\
2 & $\pi/6 < \alpha < \pi/4$; $\cos^2\alpha \notin \mathbb{Q}$; $2\cos\alpha - 1/(2\cos\alpha) \notin \mathbb{Q}$ & \textsc{impossible} & ? \\
3 & $\alpha \in \{\frac2{13}\pi, \frac2{15}\pi\}$ & \textsc{impossible} & ? \\
3 & $\pi/8 < \alpha < \pi/6$; $\alpha \notin \{\frac2{13}\pi, \frac2{15}\pi\}$ & \textsc{impossible} & \textsc{impossible} \\
$\geq 4$ & $\pi/(2k+2) < \alpha < \pi/(2k)$ & \textsc{impossible} & \textsc{impossible} \\
\end{tabular}
\caption{Our current state of knowledge about the existence of $k$-splitting gentilings of acute triangles with angles $\alpha$, $k\alpha$ and $\pi-(k+1)\alpha$. The lower and upper bounds on $\alpha$ in the table only serve to ensure that the triangle is indeed acute.}
\label{tab:splittingsummary}
\end{table}

Table~\ref{tab:splittingsummary} summarizes our current state of knowledge about the existence of $k$-splitting gentilings of acute triangles. We are left with the following open problems with regard to the existence of $k$-splitting \emph{reptilings} of oblique triangles:

\begin{problem}\label{pro:smallest2splitting}
Is there an oblique triangle that admits a 2-splitting $r$-reptiling with $r < 169$?
\end{problem}
\begin{problem}
Is there an oblique triangle with angles $\alpha$, $2\alpha$ and $\pi-3\alpha$ with $\cos\alpha \notin \mathbb{Q}$, that admits a 2-splitting reptiling?
\end{problem}
\begin{facts}
If we drop the requirements $\cos\alpha \notin \mathbb{Q}$ and $r < 169$, a construction is given in Section~\ref{sec:splittingconstruction}. If we drop the requirement that the triangle is not a right triangle, then
all right triangles, regardless of $\cos\alpha$, are solutions allowing a 2-splitting 4-reptiling.
\end{facts}

For gentilings, we can easily extend our negative results to obtuse triangles with an angle larger than $2\pi/3$:

\begin{theorem}\label{thm:noveryobtuse3splitting}
No triangle $T$ with an angle greater than $2\pi/3$ admits a $k$-splitting gentiling for $k \geq 3$.
\end{theorem}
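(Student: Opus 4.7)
The plan is to argue by contradiction via two rounds of pigeon-hole counting based on Lemma~\ref{lem:euler}, in the spirit of the arguments used in Section~\ref{sec:nosplitting}. Let $\alpha \leq \beta < \gamma$ be the angles of $T$ with $\gamma > 2\pi/3$ (so $\alpha + \beta < \pi/3$), and suppose for contradiction that a $k$-splitting gentiling ${\cal T}$ with $k \geq 3$ exists; let $v$ be the split master vertex. The case where $v$ has angle $\alpha$ is immediately ruled out, since any $k \geq 2$ positive tile-angles summing at $v$ would sum to at least $k\alpha > \alpha$.

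The first counting step pigeon-holes the $r$ many $\gamma$-angles contributed by the tiles. Since $\gamma > 2\pi/3$, we have $3\gamma > 2\pi$ and $2\gamma > \pi$, so each full vertex accommodates at most two $\gamma$-angles, each half vertex at most one, and among the three master vertices only the $\gamma$-corner can host a $\gamma$-angle (at most one). Thus at most $2f + h + 1$ $\gamma$-angles fit, which by Lemma~\ref{lem:euler} is exactly $r$. Equality must therefore hold at every vertex: every full vertex hosts exactly two $\gamma$-angles, every half vertex exactly one, and the $\gamma$-master exactly one. If $v$ were the $\gamma$-master, the split would replace its $\gamma$-angle by $k$ strictly smaller pieces (none of which can itself be $\gamma$, since then the remaining $k - 1 \geq 2$ positive pieces would have to sum to zero), breaking the tight count, a contradiction.

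So $v$ must be the $\beta$-corner. The same ``no $\gamma$-summand and no $\beta$-summand'' argument forces all $k$ pieces at $v$ to equal $\alpha$, yielding $k\alpha = \beta$ and hence $\beta \geq 3\alpha$. I then repeat the pigeon-hole on the $r$ many $\beta$-angles. From the tight $\gamma$-count, every full vertex has residual angle $2\pi - 2\gamma = 2\alpha + 2\beta$ to fill with $\alpha$- and $\beta$-angles, and every half vertex has residual $\alpha + \beta$. The inequality $\beta \geq 3\alpha$ causes three $\beta$-angles to overfill the full-vertex residual and two $\beta$-angles to overfill the half-vertex residual, so at most two $\beta$-angles fit at each full vertex and at most one at each half vertex. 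None of the three master vertices contributes any $\beta$-angle (the $\beta$-master is split entirely into $\alpha$'s; the $\alpha$- and $\gamma$-masters accommodate their own angles), so the total is at most $2f + h = r - 1$, one short of the required $r$; contradiction.

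The delicate point is that the final count relies on $\beta \geq 3\alpha$, not merely $\beta > 2\alpha$, to kill the residual-angle inequalities at full and half vertices. This is precisely where the hypothesis $k \geq 3$ is essential; for $k = 2$ the corresponding arithmetic admits an extra $\beta$-heavy configuration at a full vertex, consistent with the fact that $2$-splitting gentilings of certain obtuse triangles do exist, as noted at the end of Section~\ref{sec:splittingconstruction}.
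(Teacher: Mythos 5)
Your proof is correct and follows essentially the same route as the paper: two rounds of pigeon-hole counting via Lemma~\ref{lem:euler}, first showing the tight $\gamma$-count forces the split to occur at the $\beta$-corner with $\beta = k\alpha$, then showing the $\beta$-count comes up one short. (One cosmetic remark: the strict inequality $\beta > 2\alpha$ already suffices to overfill the full-vertex residual, and $\beta \geq 3\alpha$ is simply the form this takes for integer $k \geq 3$ — exactly the inequality $2\gamma + 3\beta = 2\gamma + 2\beta + k\alpha > 2\pi$ used in the paper.)
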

\begin{proof}
Let the angles of $T$ be $\alpha \leq \beta < \gamma$, where $\gamma > 2\pi/3$.

By Lemma~\ref{lem:euler} we have $2r = 4f + 2h + 2$. This means that the $\gamma$ angles of the tiles in a tiling ${\cal T}$ of $T$ are exactly accounted for if each full vertex of $G_{\cal T}$ fits two, each half vertex fits one, and there is one at a master vertex. Since $3\gamma > 2\pi$ and $2\gamma > \pi$, this is also the most that will fit at each of these vertices. Hence, the full vertices, half vertices, and master vertices must fit exactly two $\gamma$ angles per vertex, one $\gamma$ angle per vertex, and one $\gamma$ angle in total, respectively. In particular, the master vertex with the $\gamma$ angle cannot be split, so we must have $\beta = k\alpha$.

Now, by Lemma~\ref{lem:euler}, the $\beta$ angles of the tiles in a tiling are exactly accounted for if each full vertex of $G_{\cal T}$ fits two, each half vertex fits one, and there is one at a master vertex. Since $2\gamma + 3\beta = 2\gamma + 2\beta + k\alpha > 2\pi$ and $\gamma + 2\beta = \gamma + \beta + k\alpha > \pi$, this is also the most that will fit at each of these vertices. Hence, the $\beta$ angle cannot be split either. It follows that a $k$-splitting gentiling is not possible.
\end{proof}

This leaves several open problems with regard to the existence of $k$-splitting \emph{gentilings} of triangles, including:

\begin{problem}
Is there an oblique triangle with angles $\alpha$, $2\alpha$ and $\pi-3\alpha$ with $\cos^2\alpha \notin \mathbb{Q}$ that admits a 2-splitting gentiling?
\end{problem}
\begin{problem}
\begin{compactitem}
\item[(i)] Does the triangle with angles $\frac2{13}\pi$, $\frac5{13}\pi$ and $\frac6{13}\pi$ admit a 3-splitting gentiling?
\item[(ii)] Does the triangle with angles $\frac2{15}\pi$, $\frac6{15}\pi$ and $\frac7{15}\pi$ admit a 3-splitting gentiling?
\end{compactitem}
\end{problem}


\begin{problem}
What is the largest $k$ for which there exists a $k$-splitting gentiling?
\end{problem}
\begin{facts}
The isosceles triangle with $\alpha = \frac16\pi$ admits a 4-splitting 5-gentiling (Figure~\ref{fig:facecontinuouscurves}(d)). From Theorems \ref{thm:splittingangles} and~\ref{thm:noveryobtuse3splitting} we get that the triangle that maximizes $k$ must be non-acute with largest angle at most $2\pi/3$.
\end{facts}

Finally consider our original goal. The proof of Section~\ref{sec:sfc} that no face-continuous space-filling curves exist on the basis of reptilings of an acute triangle, crucially exploits the fact that the triangle is acute and that tiles have the same size and that this makes it impossible to construct useful caps. The following questions are still completely open:

\begin{problem}
Is there a face-continuous space-filling curve based on a reptiling of an \emph{obtuse} triangle?
\end{problem}
Note that with respect to the original motivation, obtuse triangles are probably not interesting,
as they would not be better-shaped than the isosceles right triangles underlying the Sierpi\'nski curve. Therefore, unless an application appears which would really require oblique mesh elements, the above problem may mostly be interesting for entertainment.

\begin{figure}
\centering
\includegraphics[width=\hsize]{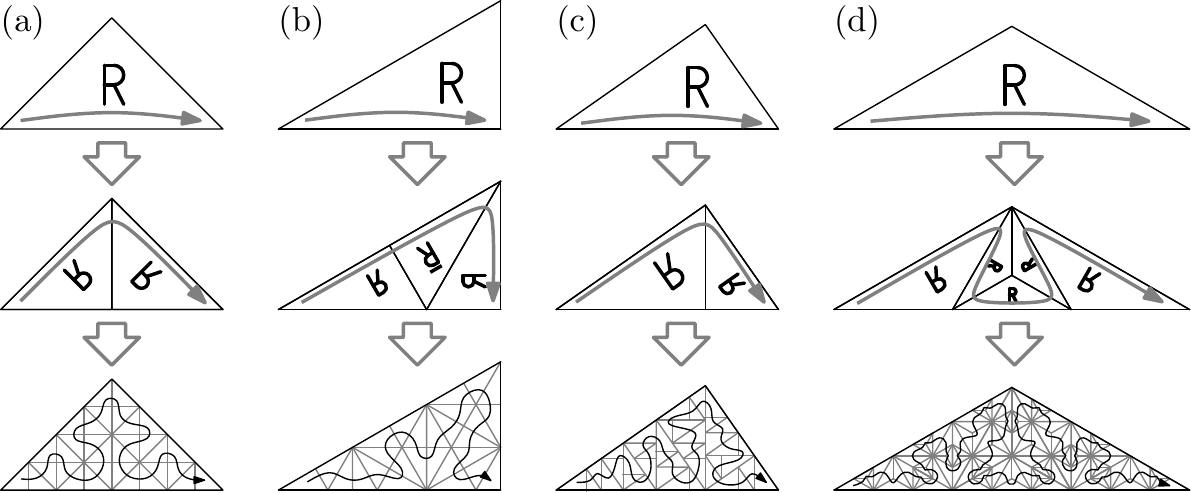}
\caption{Four face-continuous space-filling curves based on gentilings of non-acute triangles. The top row shows, for each curve, the master tile; the next row shows how this tile is subdivided into smaller tiles, in what order the tiles are traversed by the space-filling curve, and what transformations map the master tile to each tile (an overscore on the \textsf{R} signifies reversal of the order in which a tile is traversed). The bottom row shows the tiling that results from applying the recursive tiling rules until each tile has at most 1/25 of the area of the master tile. In all figures, the curve with the arrow head shows in what order the tiles are traversed. (a) The Sierpi\'nski curve, based on a reptiling of the isosceles right triangle.\quad (b) A curve based on a reptiling of the right triangle with angles $\pi/6$, $\pi/3$ and $\pi/2$. To the best of our knowledge, the curve is known but unnamed.\quad (c) An example of a P\'olya curve based on a gentiling of a right triangle. Such curves can be constructed for any right triangle.\quad (d) A curve based on a gentiling of the isosceles triangle with angles $\pi/6$ and $2\pi/3$. Note that the same curve can be constructed from two copies of curve (b).}
\label{fig:facecontinuouscurves}
\end{figure}

\begin{problem}
Is there a face-continuous space-filling curve based on a \emph{gentiling} of an acute triangle?
\end{problem}
\begin{facts}
Note that face-continuous space-filling curves can be constructed based on:\begin{compactitem}
\item a 2-splitting 2-reptiling of the isosceles right triangle (the Sierpi\'nski curve~\cite{sagan}, Figure~\ref{fig:facecontinuouscurves}(a));
\item a 2-splitting 3-reptiling of the right triangle with angle $\frac16\pi$ (Figure~\ref{fig:facecontinuouscurves}(b)).
\item a 2-splitting 2-gentiling of any right triangle (P\'olya curves~\cite{sagan}, Figure~\ref{fig:facecontinuouscurves}(c));
\item a 4-splitting 5-gentiling of the isosceles obtuse triangle with base angle $\frac16\pi$ (Figure~\ref{fig:facecontinuouscurves}(d));
\end{compactitem}
\end{facts}

\paragraph{Acknowledgements}
We thank Dirk Gerrits for his help in obtaining our first proofs of Theorem~\ref{thm:no3splittingreptiling}.

\bibliography{bibliography}{}
\bibliographystyle{amsplain}

\end{document}